\documentclass[12pt]{amsart}
\usepackage{amsmath,amssymb}
\usepackage{fullpage}

\usepackage[all]{xy}

\title[Real orbits on spherical spaces: split case]
{Real orbits of complex spherical homogenous spaces: the split case}

\author[S.~Cupit-Foutou]{St\'ephanie Cupit-Foutou}
\address{Ruhr-Universit\"at Bochum, Fakult\"at f\"ur Mathematik, D-44780 Bochum, Germany}
\email{stephanie.cupit@rub.de}

\author[D.~A.~Timashev]{Dmitry A. Timashev}
%\thanks{The second author was partially supported by the Russian Foundation for Basic Research grant 16-01-00818}
\address{Lomonosov Moscow State University, Faculty of Mechanics and
Mathematics, Department of Higher Algebra, 119991 Moscow, Russia}
\email{timashev@mccme.ru}

\keywords{Split reductive group, spherical homogeneous space, real point, orbit, cotangent bundle, momentum map}
\subjclass[2010]{14M27, 14G05, 20G20}

\date{\today}

\newcommand{\0}[2]{{}_{#2}#1}
\newcommand{\9}[2]{{}^{#2}#1}
\newcommand{\eps}{\varepsilon}
\newcommand{\ii}{\boldsymbol{i}}

\newcommand{\CC}{\mathbb{C}}
\newcommand{\RR}{\mathbb{R}}
\newcommand{\NN}{\mathbb{N}}
\newcommand{\QQ}{\mathbb{Q}}
\newcommand{\ZZ}{\mathbb{Z}}
\newcommand{\PP}{\mathbb{P}}
\newcommand{\AAAA}{\mathbb{A}}

\newcommand{\AAA}{\mathsf{A}}
\newcommand{\BBB}{\mathsf{B}}
\newcommand{\CCC}{\mathsf{C}}
\newcommand{\GGG}{\mathsf{G}}

\newcommand{\aaa}{\mathfrak{a}}
\newcommand{\B}{\mathfrak{B}}
\newcommand{\bb}{\mathfrak{b}}
\newcommand{\g}{\mathfrak{g}}
\newcommand{\h}{\mathfrak{h}}
\newcommand{\llll}{\mathfrak{l}}
\newcommand{\p}{\mathfrak{p}}
\newcommand{\s}{\mathfrak{s}}
\newcommand{\ttt}{\mathfrak{t}}
\newcommand{\uu}{\mathfrak{u}}
\newcommand{\z}{\mathfrak{z}}
\newcommand{\gl}{\mathfrak{gl}}
\newcommand{\sgl}{\mathfrak{sl}}

\newcommand{\N}{\mathcal{N}}
\newcommand{\OO}{\mathcal{O}}
\newcommand{\Q}{\mathcal{Q}}
\newcommand{\SSS}{\mathcal{S}}
\newcommand{\U}{\mathcal{U}}
\newcommand{\V}{\mathcal{V}}

\newcommand{\Bmax}{\B_{\text{\upshape{max}}}}
\newcommand{\Bopen}{\B_{\text{\upshape{open}}}}
\newcommand{\Omax}{\OO_{\text{\upshape{max}}}}
\newcommand{\OOmax}{\widetilde\OO_{\text{\upshape{max}}}}
\newcommand{\NQ}{\Xi^*_{\QQ}}
\newcommand{\vlW}{W^{\sharp}}

\newcommand{\Aut}{\operatorname{Aut}}
\newcommand{\Hom}{\operatorname{Hom}}
\newcommand{\Img}{\operatorname{Im}}
\newcommand{\Ker}{\operatorname{Ker}}
\newcommand{\Spin}{\operatorname{Spin}}
\newcommand{\diag}{\operatorname{diag}}
\newcommand{\codim}{\operatorname{codim}}
\newcommand{\rk}{\operatorname{rk}}
\newcommand{\pr}{\text{\upshape{pr}}}

\newcommand{\bundle}[3]{{#1}\times^{#2}{#3}}
\newcommand{\Ru}[1]{#1_{\text{\upshape{u}}}}
\newcommand{\quot}{/\!\!/}

\newtheorem{theorem}{Theorem}
\newtheorem{proposition}[theorem]{Proposition}
\newtheorem{lemma}[theorem]{Lemma}
\newtheorem{corollary}[theorem]{Corollary}
\theoremstyle{definition}

\newtheorem{remark}[theorem]{Remark}
\newtheorem{question}[theorem]{Question}

\usepackage{color}

\newcounter{savecounter}

\begin{document}

\begin{abstract}
We identify the $G(\RR)$-orbits of the real locus $X(\RR)$ of any spherical complex variety $X$ defined over $\RR$ and homogeneous under a split connected reductive group $G$ defined also over $\RR$.
This is done by introducing some reflection operators on the set of real Borel orbits of $X(\RR)$.
We thus investigate the existence problem for an action of the Weyl group of $G$ on the set of  real Borel orbits of $X(\RR)$.
In particular, we determine the varieties $X$ for which these operators define an action of the very little Weyl group of $X$ on the set of open real Borel orbits of $X(\RR)$. This enables us to give a parametrization of the $G(\RR)$-orbits of $X(\RR)$ in terms of the orbits of this new action.
\end{abstract}

\maketitle

\section*{Introduction}

The celebrated Sylvester's law of inertia for %non-degenerate
real quadratic forms in $n$ variables can be interpreted as a parametrization of the $GL_n(\RR)$-orbits in the real locus of the set of %non-degenerate
complex quadratic forms equipped with the real structure defined by the complex conjugation of matrices.

The set of non-degenerate complex quadratic forms
is an instance of a complex symmetric space, a homogeneous space $X=G/H$ under a reductive complex algebraic group $G$ with $H$ being the fixed point set of an involution of $G$.
A description of the $G(\RR)$-orbits on the set $X(\RR)$ of real points of any symmetric space $X$ defined over $\RR$ was obtained in~\cite{CFT} by the authors via Galois cohomology, in the spirit of a previous work of Borel and Ji~\cite{BorelJi}.
The method employed in these works makes essential use of the polar decomposition for symmetric spaces.

In the present paper, we are concerned with the more general class of
spherical varieties, i.e., irreducible algebraic varieties acted on by a reductive group $G$ in such a way that they contain an open orbit for any Borel subgroup
of $G$. In particular, we give a parametrization of the $G(\RR)$-orbits in $X(\RR)$ for any spherical homogeneous $G$-variety $X$ defined over $\RR$ provided that $G$ is a split connected reductive group defined over $\RR$.
Contrary to the aforementioned work, the approach followed here does not use Galois cohomology, it is geometrical.

Our parametrization builds mainly on the study of the real locus $\Omax(\RR)$ of the open orbit in $X$ of a split Borel subgroup $B$ of $G$, with particular focus on its set of $B(\RR)$-orbits.
In the case of non-degenerate real quadratic forms in $n$ variables, the open orbits of the real upper triangular matrices are parameterized by $n$-tuples of signs $\pm$ and this identification is equivariant for the natural actions of the symmetric group of degree $n$ on the set of these orbits and on the set of such tuples.
This naturally leads us to also investigate the existence of an action of the Weyl group of $G$ on the set of $B(\RR)$-orbits of $X(\RR)$ for any spherical $X$.

In Section~\ref{sec:open-Borel-orb}, by considering the local structure of $X$, we obtain a parametrization of the $B(\RR)$-orbits of $\Omax(\RR)$ in terms of tuples of signs $\pm$
(Corollary~\ref{cor:paramet-borel-orb}).

In Section~\ref{sec:min&refl}, we study the orbits of any minimal parabolic subgroup of $G(\RR)$ on $X(\RR)$.
This enables us to determine when two $B(\RR)$-orbits of $\Omax(\RR)$ lie in one and the same $G(\RR)$-orbit of $X(\RR)$ (Proposition~\ref{prop:refl&G-orb}).
Based on this description, we introduce an action of the simple reflections  of the Weyl group $W$ of $G$ on the set of $B(\RR)$-orbits of $X(\RR)$.

Next in Section~\ref{sec:braid-rel}, we precisely identify when the simple reflections acting on the set
of open $B(\RR)$-orbits of $X(\RR)$ satisfy the braid relations of $W$.
Specifically, we show that the simple reflections define an action of the very little Weyl group of $X$ (for a definition see \S\ref{subsec:recalls}) on this set
only if $X$ belongs to a certain class of spherical homogeneous varieties
(see Theorem~\ref{thm:main-little-Weyl} and Remark~\ref{rmk:no-braid}).
Finally, for these alluded $X$, we get a parametrization of the $G(\RR)$-orbits on $X(\RR)$ in terms of the orbits of the very little Weyl group of $X$  on the set
of open $B(\RR)$-orbits of $X(\RR)$ (Corollary~\ref{cor:parametrization}); this result may be regarded as an analog of Sylvester's law of inertia.

In Section~\ref{sec:R-tangent}, we further investigate the existence problem for an action of the Weyl group of $G$ on the set of $B(\RR)$-orbits of $X(\RR)$.
Theorem~\ref{thm:real-Knop-action} is a partial answer to this problem:
we prove that the actions of the simple reflections of $W$ define an action of $W$ on the set of $B(\RR)$-orbits in $X(\RR)$ with maximal rank for the aforementioned peculiar $X$.
This is achieved %also for the aforementioned peculiar $X$ but instead of verifying directly that the braid relations hold,we relate
by relating the real $B(\RR)$-orbits of maximal rank  %$\mathcal B(X(\RR))_{\max}$
to the real cotangent bundle of $X(\RR)$ via conormal bundles in such a way that the action of $W$ becomes manifest.
This approach has been inspired by the work of Knop~\cite{knop:action},
where an action of $W$ on the set of $B$-orbits of $X$ is constructed.
The part of this work with which we are concerned is reviewed in Section~\ref{sec:Weyl-C}.

In Section~\ref{sec:examples} we conclude this work by discussing some examples.

\subsection*{Acknowledgment}

This work was partially supported by the SFB/TRR 191 ``Symplectic Structures in Geometry, Algebra and Dynamics'' of the Deutsche Forschungsgemeinschaft.
The second author acknowledges partial support from the Russian Foundation for Basic Research, project number 20-01-00091.

%%%%%%%%%%%%%%%%%%%%%%%%%%%%%%%%%%%%%%%%%%%%%%%%%%%%%%%%%%
\section{Notation and basic material}
\label{sec:basic}

The ground field is the field $\CC$ of complex numbers.
Complex algebraic varieties and groups are identified with the sets of their complex points.
For an algebraic variety $Y$ defined over $\RR$,  we let $Y(\RR)$ be its set of real points and we denote the complex conjugation map defining the real structure on $Y$ by $y\mapsto\bar{y}$ unless otherwise specified.

Algebraic groups and Lie groups are written with capital Latin letters and their Lie algebras with respective lowercase German letters.
The identity component of an algebraic (Lie) group $H$ is denoted by $H^\circ$ and the center of $H$ is denoted by $Z(H)$. The unipotent radical of $H$ is denoted by $\Ru{H}$.

Throughout, $G$ is a split connected reductive algebraic group defined over $\RR$. The complex conjugation on $G$ is denoted by
$$
\sigma:G\longrightarrow G, \quad\sigma(g)=\bar{g}.
$$
Given any subset $H\subset G$, we denote the set of elements of $H$ fixed by $\sigma$ with $H^\sigma$.

We fix a split maximal torus $T\subset G$ and a Borel subgroup $B\subset G$ containing $T$. Both subgroups are defined over $\RR$.
Let $U$ be the maximal unipotent subgroup of $B$ and $W=N_G(T)/T$ be the Weyl group of $G$.
Let $B^-$ denote the opposite Borel subgroup of $G$, that is $B\cap B^-=T$, and $U^-$ be the maximal unipotent subgroup of $B^-$. The root subgroup corresponding to a root $\alpha$ of $G$ with respect to $T$ is denoted by $U_\alpha$; it consists of the elements $u_{\alpha}(s)=\exp se_{\alpha}$, where $e_{\alpha}$ is a generator of the respective root subspace $\g_{\alpha}\subset\g$.

Recall that an irreducible variety $X$ equipped with an action of $G$ is called \emph{spherical} if any Borel subgroup of $G$ has an open dense orbit on $X$.
As originally proved by Brion and Vinberg independently, this property is equivalent to $X$ having finitely many orbits for any Borel subgroup of $G$
(see e.g.~\cite[2.6]{knop:action}).

Henceforth, we let $X$ be a spherical homogeneous $G$-variety defined over $\RR$ and $\Omax$ be its open $B$-orbit.
The complex conjugation on $X$ is denoted by
$$
\mu:X\longrightarrow X,\quad \mu(x)=\bar{x}.
$$
We assume $X(\RR)\ne\varnothing$ and we fix a base point $x_0\in X(\RR)$.
We may choose $x_0$ in the orbit $\Omax$.
Let $H$ be the stabilizer of $x_0$ in $G$. Note that $H$ is a subgroup of $G$ defined over $\RR$.

Conversely, suppose $X=G/H$ is spherical and $H$ is defined over $\RR$.
Then $X$ admits a unique real structure $\mu$ compatible with the action of $G$ (i.e., $\mu(gx)=\sigma(g)\mu(x)$ for all $(g,x)\in G\times X$) and such that $x_0$ is real w.r.t.~$\mu$.
Namely, $\mu$ is the real structure given by
$$
x=gx_0\longmapsto\mu(x)=\sigma(g)x_0\quad\text{for any $g\in G$}.
$$

The automorphism group of $X$ acts on the set of real structures on $X$ by conjugation. Note that the group of $G$-equivariant automorphisms of $X$ is isomorphic to $N_G(H)/H$ with $N_G(H)$ being the normalizer of $H$ in $G$.

%------------------------------------------
\section{Open real Borel orbits in the real locus}\label{sec:open-Borel-orb}

The real locus $X^{\mu}=X(\RR)$ is a smooth real analytic manifold of real dimension equal to the (complex) dimension of $X$: it is a totally real submanifold of $X$.
It decomposes into a disjoint union of finitely many orbits of $G(\RR)=G^{\sigma}$ (see \cite[Chap.~III, 4.2 and 4.4]{Galois}), which are Zariski dense in $X$ as well as open and closed in the classical Hausdorff topology of $X^{\mu}$.
This implies readily

\begin{proposition}
Each $G^{\sigma}$-orbit on $X^{\mu}$ intersects $\Omax$ in finitely many $B^{\sigma}$-orbits.
\end{proposition}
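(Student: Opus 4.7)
The plan is to derive the proposition from the same input already invoked in the preceding paragraph, namely the finiteness of $G^\sigma$-orbits in $X^\mu$ coming from the Borel--Serre theorem on the finiteness of real Galois cohomology. The mechanism is to apply this principle a second time, now to the single $B$-orbit $\Omax$ instead of to the whole variety $X$.

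First I would note that $\Omax$ is $\sigma$-stable: it is the unique open $B$-orbit in $X$, both $B$ and $X$ are defined over $\RR$, so $\sigma(\Omax)$ is again an open $B$-orbit and must coincide with $\Omax$. Hence $\Omax$ itself is defined over $\RR$, and $\Omax\cap X^\mu$ is the real locus of the $B$-homogeneous space $\Omax\cong B/B_{x_0}$, where $B_{x_0}=H\cap B$ is a subgroup of $B$ defined over $\RR$ since $x_0$ is fixed by $\mu$.

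Next I would count $B^\sigma$-orbits on this real locus using the standard exact sequence of pointed sets in Galois cohomology
\begin{equation*}
1\to B_{x_0}^\sigma\to B^\sigma\to (B/B_{x_0})^\sigma\to H^1(\Gamma,B_{x_0})\to H^1(\Gamma,B),
\end{equation*}
with $\Gamma=\operatorname{Gal}(\CC/\RR)$. This identifies the set of $B^\sigma$-orbits in $\Omax\cap X^\mu$ with the kernel $\Ker\bigl(H^1(\Gamma,B_{x_0})\to H^1(\Gamma,B)\bigr)$, which is a finite pointed set by the Borel--Serre finiteness theorem for $H^1(\RR,L)$ of a linear algebraic $\RR$-group $L$. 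Thus $\Omax\cap X^\mu$ consists of only finitely many $B^\sigma$-orbits.

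The conclusion then requires no further work: for any $G^\sigma$-orbit on $X^\mu$, its intersection with $\Omax$ is $B^\sigma$-stable (as $B^\sigma\subset G^\sigma$), is nonempty because $G^\sigma$-orbits are Zariski dense in $X$ while $\Omax$ is Zariski open, and is contained in $\Omax\cap X^\mu$; hence it is a union of finitely many $B^\sigma$-orbits. I do not expect any real obstacle; the only points to verify carefully are that $\Omax$ is $\sigma$-stable and that $B_{x_0}$ inherits an $\RR$-structure from $x_0\in X(\RR)$, both of which follow at once from uniqueness of the open $B$-orbit and the fact that $x_0$ is $\mu$-fixed.
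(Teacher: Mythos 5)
Your proposal is correct and is essentially the argument the paper intends: the paper's ``This implies readily'' is exactly the observation that the Galois-cohomology finiteness cited from Serre (finitely many real orbits on the real locus of a homogeneous space defined over $\RR$) applies equally to $B$ acting on $\Omax\cong B/(B\cap H)$, after which each $G^{\sigma}$-orbit meets $\Omax$ in a union of some of these finitely many $B^{\sigma}$-orbits. You have simply made explicit the exact sequence and the $\mu$-stability of $\Omax$ that the paper leaves implicit (note only the harmless notational slip of writing $\sigma(\Omax)$ where the conjugation on $X$ is denoted $\mu$).
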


Consider the following parabolic subgroup of $G$:
$$
P=\{g\in G\mid g\Omax=\Omax\}.
$$
Note that $P$ contains $B$. Let $P=\Ru{P}\rtimes L$ denote the Levi decomposition of $P$ with $L\supseteq T$.

\begin{theorem}\label{thm:loc-str}
There exists a closed $L$-stable subvariety $Z\subset\Omax$ such that
\begin{itemize}
  \item $\Omax\simeq\bundle{P}{L}{Z}\simeq\Ru{P}\times Z$ (as $P$-varieties) and
  \item $L$ acts on $Z$ transitively with its semisimple part acting trivially.
\end{itemize}
Furthermore, $Z$ may be chosen to be defined over $\RR$.
\end{theorem}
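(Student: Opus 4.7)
The plan is to invoke the complex Brion--Luna--Vust local structure theorem and then descend the subvariety $Z$ to $\RR$ by a Galois-cohomology argument for a connected unipotent group. Over $\CC$ the local structure theorem proceeds as follows: the group $P=\{g\in G:g\Omax=\Omax\}$ is parabolic and contains $B$ because $\Omax$ is the unique open $B$-orbit; a Levi $L$ of $P$ can be chosen so that $\Ru{P}\cap H=\{e\}$ (a standard consequence of sphericity: the stabilizer $P\cap H$ is reductive, hence contained in some Levi of $P$). Then $\Ru{P}$ acts freely on $\Omax$, so $\pi\colon\Omax\to Z_{\mathrm{abs}}:=\Omax/\Ru{P}$ is a principal $\Ru{P}$-bundle, the residual $L$-action on $Z_{\mathrm{abs}}$ is transitive with $[L,L]$ in its kernel, and any $L$-equivariant section of $\pi$ produces an admissible $Z\subset\Omax$.

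For the real structure, $\Omax$ is $\RR$-defined because it is the unique open orbit of the $\RR$-defined group $B$, and so is $P$. Since $T$ is $\RR$-split, the standard Levi $L\supseteq T$, the radical $\Ru{P}$, and the Levi decomposition $P=\Ru{P}\rtimes L$ are $\RR$-defined; consequently $\pi$ is an $\RR$-defined $\Ru{P}$-torsor carrying a compatible $\RR$-defined $L$-action. Setting $z_0:=\pi(x_0)\in Z_{\mathrm{abs}}(\RR)$, its $L$-stabilizer $L_Z$ is $\RR$-defined and necessarily contains $[L,L]$.

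The set $\SSS$ of $L$-equivariant sections of $\pi$ over $\CC$ is non-empty by the complex theorem. Any two sections differ by an $L$-equivariant map $Z_{\mathrm{abs}}\to\Ru{P}$, which, by transitivity of $L$ on $Z_{\mathrm{abs}}$, is determined by its value at $z_0$; this value must lie in the centralizer $(\Ru{P})^{L_Z}$. Hence $\SSS$ is a torsor under the connected unipotent $\RR$-group $(\Ru{P})^{L_Z}$, with $\mu$ acting compatibly. Since $H^1(\RR,U)=1$ for every connected unipotent $\RR$-group $U$ (d\'evissage through $\mathbb{G}_a$, using that $H^1(\RR,\mathbb{G}_a)=0$), $\SSS$ contains a $\mu$-fixed, hence $\RR$-rational, section; its image is the required $\RR$-defined subvariety $Z$.

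The main obstacle is ensuring that the standard Levi $L\supseteq T$ is compatible with the base point $x_0$ in the sense that $\Ru{P}\cap H=\{e\}$; if not, one must adjust $x_0$ by an element of $\Ru{P}(\RR)$, which again uses $H^1(\RR,\Ru{P})=1$. Granted this, the descent of $Z$ reduces to the cohomological vanishing above. An entirely equivalent but more concrete tactic is to fix any complex $Z_0$ from the Brion--Luna--Vust theorem, observe that $\mu(Z_0)=u\cdot Z_0$ for a uniquely determined $u\in(\Ru{P})^{L_Z}$, and produce a $\mu$-fixed $Z$ by extracting a ``square root'' of $u$ via the exponential map of the unipotent Lie algebra.
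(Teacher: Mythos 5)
Your argument is correct, but it follows a genuinely different route from the paper's. The paper does not reprove the complex Local Structure Theorem either (it cites Brion--Luna--Vust and Knop); for the $\RR$-rationality of the slice it relies on Knop's construction via flats (Proposition~\ref{prop:flats}): since $T$ is split, $\aaa^*$ and its principal stratum $\aaa^{\pr}$ are defined over $\RR$, the distinguished component $\SSS$ of $\Phi^{-1}(\aaa^{\pr})$ is $\mu$-stable by uniqueness, and hence the flat $Z(\zeta)$ is defined over $\RR$ for any real $\zeta\in\aaa^{\pr}$ --- this is what the paper means by ``Knop's construction \dots implies readily that the slice may be chosen to be defined over $\RR$.'' You instead take an arbitrary complex slice and descend it by identifying the set of $L$-equivariant sections of $\Omax\to\Omax/\Ru{P}$ with a torsor under the connected unipotent group $(\Ru{P})^{L_Z}$ and invoking $H^1(\RR,U)=1$; this is a clean and self-contained alternative, and notably it uses exactly the cohomological vanishing the paper itself quotes from Serre in Remark~\ref{rmk:rk-red-real}. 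What the paper's route buys is that the slice it produces is a flat $Z(\zeta)$, which is essential for the momentum-map arguments of Sections~\ref{sec:Weyl-C}--\ref{sec:R-tangent}; what yours buys is independence from the cotangent-bundle machinery. Two small caveats that do not affect the argument: your worry about arranging $\Ru{P}\cap H=\{e\}$ is moot, since freeness of the $\Ru{P}$-action on $\Omax$ is part of the complex statement and, $\Ru{P}$ being normal in $P$ and $\Omax$ a single $P$-orbit, is independent of the base point; and in the closing ``concrete tactic'' the relation $\mu(Z_0)=u\cdot Z_0$ should be read in the torsor of sections (the two sections differ by the $L$-equivariant function $lz_0\mapsto lul^{-1}$), not as a literal translation of the subvariety $Z_0$ by the single element $u$.
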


We call such a $Z$ a \emph{slice} of $X$. Throughout this paper, we take $Z$ to be defined over $\RR$.

Theorem~\ref{thm:loc-str} is known as (the generic version of) the \emph{Local Structure Theorem}.
There are several known constructions of a slice, due to Brion--Luna--Vust \cite{BLV}, Knop \cite{inv.mot} et al.
For us, Knop's construction, which we will review later (see Proposition~\ref{prop:flats}), will be relevant; in particular, it implies readily that the slice may be chosen to be defined over~$\RR$.

\begin{corollary}\label{cor:struct-loc}
Let $Z$ be a slice of $X$ defined over $\RR$. Then
\begin{equation*}%\label{eqn:borel-orb}
\Omax^{\mu}\simeq\Ru{P}^{\sigma}\times Z^{\mu}.%\tag{$\OO$}.
\end{equation*}
\end{corollary}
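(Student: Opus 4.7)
The plan is to observe that the trivialization $\Ru{P}\times Z\xrightarrow{\sim}\Omax$, $(u,z)\mapsto uz$, furnished by Theorem~\ref{thm:loc-str}, is already defined over $\RR$ once $Z$ is, and then simply to take real points on both sides.

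First I would verify that each ingredient is $\sigma$- or $\mu$-stable. The open Borel orbit $\Omax=Bx_0$ is $\mu$-stable because $B$ is defined over $\RR$ and $x_0\in X(\RR)$: indeed $\mu(\Omax)=\sigma(B)x_0=Bx_0=\Omax$. Consequently, the parabolic $P=\{g\in G\mid g\Omax=\Omax\}$ is $\sigma$-stable, and so is its unipotent radical $\Ru{P}$, which is canonically attached to $P$. A Levi factor $L\supseteq T$ can likewise be taken to be defined over $\RR$: either because $L$ is the centralizer of the real torus $Z(L)^{\circ}$, or by appealing directly to Knop's construction (Proposition~\ref{prop:flats}), which is the one referenced in the paper for producing a real slice $Z$ together with a real Levi.

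Next I would use that the multiplication map $m:\Ru{P}\times Z\to\Omax$, $(u,z)\mapsto u\cdot z$, is a morphism of $\RR$-varieties, since it is induced by the $G$-action on $X$. By Theorem~\ref{thm:loc-str} it is an isomorphism, so its inverse is automatically also defined over $\RR$. The product real structure on $\Ru{P}\times Z$ is $(\sigma,\mu)$, whose fixed locus is exactly $\Ru{P}^{\sigma}\times Z^{\mu}$; hence $m$ restricts to the desired real-analytic isomorphism $\Ru{P}^{\sigma}\times Z^{\mu}\xrightarrow{\sim}\Omax^{\mu}$.

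There is essentially no obstacle here: the corollary is a tautological consequence of the real form of the Local Structure Theorem, combined with the existence of a slice defined over $\RR$. The only point that merits attention is the very fact that the trivialization can be chosen over $\RR$, which is precisely the content of the last sentence of Theorem~\ref{thm:loc-str}.
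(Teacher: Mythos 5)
Your proof is correct and matches the paper's (implicit) reasoning: the paper gives no separate argument for this corollary, treating it as an immediate consequence of the real form of Theorem~\ref{thm:loc-str}, which is exactly what you spell out. The key point you correctly isolate is that the trivialization $(u,z)\mapsto uz$ is a morphism defined over $\RR$ whose inverse is then automatically defined over $\RR$, so taking $\mu$-fixed points on both sides yields $\Omax^{\mu}\simeq\Ru{P}^{\sigma}\times Z^{\mu}$ with no Galois-cohomological subtlety.
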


Let $L_0$ denote the kernel of the $L$-action on $Z$. Note that $L_0$ contains the semisimple part $[L,L]$ of $L$. Let $T_0=T\cap L_0$. Then
$$
A:=L/L_0=T/T_0
$$
is a torus acting freely and transitively on $Z$.
By choosing the base point $x_0$ in the real locus $Z^{\mu}$, we may identify $Z$ with $A$ via the orbit map.

Let $\0T2$ (resp.\ $\0A2$) be the subgroup of $T$ (resp.\ $A$) consisting of the elements of order $\le2$.
While identifying $Z$ with $A$, we denote the finite subset of $Z^{\mu}$ corresponding to~$\0A2$ by $\0Z2$.

Let us choose compatible bases $\eps_1,\dots,\eps_n$ and $m_1\eps_1,\dots,m_r\eps_r$ ($m_i\in\NN$, for $i=1,...,r$) of the character lattices of $T$ and $A$, the latter being a sublattice of the former one.
In the coordinates provided by these bases, the quotient map $T\to A$, identified with the orbit map for the $T$-action on $Z$, is given by the formula
$$
(t_1,\dots,t_n)\longmapsto(t_1^{m_1},\dots,t_r^{m_r}).
$$

\begin{proposition}\label{prop:paramet-borel-orb}
The orbits of $T^{\sigma}$ on $Z^{\mu}$ are parameterized by the tuples of $\pm$ which are the signs of the coordinates on $Z^{\mu}$ corresponding to even $m_i$'s. In particular, there are $2^p$ orbits of $T^{\sigma}$ on $Z^{\mu}$,
with $p$ being the number of even $m_i$'s.

Moreover, each $T^{\sigma}$-orbit intersects $\0Z2$ in one $\0T2$-orbit.
\end{proposition}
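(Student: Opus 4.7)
The plan is a direct coordinate computation once we identify all objects with their coordinate models.

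First, I would observe that since $T$ is split with character lattice having trivial Galois action, and the character lattice of $A=T/T_0$ is identified with the sublattice of $\mathsf{X}^*(T)$ generated by $m_1\eps_1,\dots,m_r\eps_r$ (which is Galois-stable), the quotient torus $A$ is also split over $\RR$. Consequently $A^{\sigma}\simeq(\RR^{\times})^r$. Because the base point $x_0$ has been chosen in $Z^{\mu}$ and the $T$-action on $Z$ factors through a free transitive action of $A$, the orbit map $A\to Z$, $a\mapsto ax_0$, is a $\sigma$-equivariant isomorphism, so it restricts to an identification $Z^{\mu}\simeq A^{\sigma}\simeq(\RR^{\times})^r$. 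Similarly $T^{\sigma}\simeq(\RR^{\times})^n$.

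In these coordinates the action $T^{\sigma}\times Z^{\mu}\to Z^{\mu}$ is, by the formula given just before the proposition, nothing but
\[
(t_1,\dots,t_n)\cdot(z_1,\dots,z_r)=\bigl(t_1^{m_1}z_1,\dots,t_r^{m_r}z_r\bigr).
\]
Now I would analyze the image of $t_i\mapsto t_i^{m_i}$ in $\RR^{\times}$: it equals $\RR^{\times}$ if $m_i$ is odd and $\RR_{>0}^{\times}$ if $m_i$ is even. Hence the $i$-th coordinate of $z$ can be moved to any nonzero real number when $m_i$ is odd, while its sign is preserved when $m_i$ is even. This proves that two points of $Z^{\mu}$ lie in the same $T^{\sigma}$-orbit if and only if their coordinates corresponding to even $m_i$'s share the same signs, giving exactly $2^p$ orbits parameterized by the corresponding sign tuples.

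For the last claim, under the identification $Z\simeq A$ the subset $\0Z2$ corresponds to $\0A2=\{\pm1\}^r\subset A^{\sigma}$, and $\0T2=\{\pm1\}^n\subset T^{\sigma}$ acts by the same formula $(\eps_1,\dots,\eps_n)\cdot(z_1,\dots,z_r)=(\eps_1^{m_1}z_1,\dots,\eps_r^{m_r}z_r)$. Since $\eps_i^{m_i}=\eps_i$ for $m_i$ odd and $=1$ for $m_i$ even, the same dichotomy applies: odd-index coordinates are freely flipped while even-index ones are preserved. Therefore $\0T2$ acts on $\0Z2$ with exactly $2^p$ orbits, parameterized by the same sign tuples as before; each such orbit fits inside the corresponding $T^{\sigma}$-orbit on $Z^{\mu}$, and two distinct $\0T2$-orbits on $\0Z2$ lie in distinct $T^{\sigma}$-orbits. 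This yields the bijection asserted in the second statement.

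The only non-routine point is the first step, namely making precise the identification of $Z^{\mu}$ with $(\RR^{\times})^r$ compatibly with $T^{\sigma}$. Everything else is a direct check on the explicit coordinate model once this is in place.
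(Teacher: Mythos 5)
Your proof is correct and follows essentially the same route as the paper's: both reduce to the explicit coordinate form of the quotient map $(t_1,\dots,t_n)\mapsto(t_1^{m_1},\dots,t_r^{m_r})$ and observe that $t\mapsto t^m$ is surjective onto $\RR^{\times}$ for $m$ odd and onto $\RR_{>0}$ for $m$ even. You merely spell out in more detail the preliminary identification $Z^{\mu}\simeq A^{\sigma}\simeq(\RR^{\times})^r$ (via splitness of $A$ and the real base point), which the paper leaves implicit.
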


\begin{proof}
By considering the bases chosen above,
we note that the $T^{\sigma}$-action on $Z^{\mu}$ varies the absolute values of all coordinates arbitrarily and alters the signs of the coordinates corresponding to odd~$m_i$.
The proposition follows readily.
\end{proof}

Proposition~\ref{prop:paramet-borel-orb} improves \cite[Prop. 5.3]{ACF}, where an upper bound for the number of $(T^\sigma)^\circ$-orbits of $Z^\mu$ is given while considering some peculiar $X$.

\begin{corollary}\label{cor:paramet-borel-orb}
The $B^\sigma$-orbits on $\Omax^\mu$ are in bijection with the $\0T2$-orbits on $\0Z2$.
\end{corollary}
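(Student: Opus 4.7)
The plan is to chain together two bijections: first reduce the $B^\sigma$-orbits on $\Omax^\mu$ to $T^\sigma$-orbits on $Z^\mu$ via the Local Structure Theorem (Corollary~\ref{cor:struct-loc}), then invoke the second assertion of Proposition~\ref{prop:paramet-borel-orb} to pass from $T^\sigma$-orbits on $Z^\mu$ to $\0T2$-orbits on $\0Z2$.

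For the first reduction, I would exploit the semidirect-product structure over $\RR$. Since $B\subseteq P$ and $\Ru{P}\subseteq U\subseteq B$, the Levi decomposition $P=\Ru{P}\rtimes L$ induces $B=\Ru{P}\rtimes B_L$ with $B_L:=B\cap L=T\ltimes(U\cap L)$ a Borel subgroup of $L$. All groups in sight ($T$, $U\cap L$, $\Ru{P}$, $L$) are defined over $\RR$, so taking $\sigma$-fixed points preserves these semidirect decompositions and yields $B^\sigma=\Ru{P}^\sigma\rtimes B_L^\sigma$ and $B_L^\sigma=T^\sigma\ltimes(U\cap L)^\sigma$.

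Next, I would transport the action of $B^\sigma$ on $\Omax^\mu$ to $\Ru{P}^\sigma\times Z^\mu$ using Corollary~\ref{cor:struct-loc}. Under the $P$-isomorphism of Theorem~\ref{thm:loc-str}, the factor $\Ru{P}$ acts by left translation on the first factor only, while $L$ acts on $\Ru{P}$ by conjugation and on $Z$ through its quotient $A=L/L_0$. Two observations then collapse the picture: (i) $\Ru{P}^\sigma$ acts simply transitively on itself, so every $B^\sigma$-orbit is a saturation of the form $\Ru{P}^\sigma\times\OO$ for some $B_L^\sigma$-orbit $\OO\subseteq Z^\mu$; (ii) $(U\cap L)^\sigma\subseteq[L,L]\subseteq L_0$ acts trivially on $Z$, so the action of $B_L^\sigma$ on $Z^\mu$ factors through $T^\sigma$. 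Projection onto the second factor therefore produces a bijection
\[
\{B^\sigma\text{-orbits on }\Omax^\mu\}\ \longleftrightarrow\ \{T^\sigma\text{-orbits on }Z^\mu\}.
\]
Composing with the bijection $\{T^\sigma\text{-orbits on }Z^\mu\}\leftrightarrow\{\0T2\text{-orbits on }\0Z2\}$ provided by Proposition~\ref{prop:paramet-borel-orb} finishes the argument.

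I do not see any serious obstacle: the argument is essentially bookkeeping and follows formally from the preceding results. The one point that deserves care is that the semidirect decompositions of $B$ and $B_L$ really do descend to the $\sigma$-fixed points as genuine semidirect products, which is where the assumption that $T$, $\Ru{P}$ and $U\cap L$ are defined over $\RR$ is used; given this, the trivial action of $(U\cap L)^\sigma$ on $Z$ and the transitivity of $\Ru{P}^\sigma$ on itself immediately collapse the $B^\sigma$-action to the $T^\sigma$-action on the slice.
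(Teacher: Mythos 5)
Your proof is correct and follows exactly the paper's route: the paper's own proof consists of the observation (via Corollary~\ref{cor:struct-loc}) that each $B^\sigma$-orbit on $\Omax^\mu$ meets $Z^\mu$ in exactly one $T^\sigma$-orbit, combined with the second assertion of Proposition~\ref{prop:paramet-borel-orb}. You have merely filled in the bookkeeping behind the first step (the real semidirect decompositions of $B$ and $B\cap L$, the simple transitivity of $\Ru{P}^\sigma$ on itself, and the triviality of the $(U\cap L)^\sigma$-action on $Z$), all of which is accurate.
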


\begin{proof}
By Corollary~\ref{cor:struct-loc}, any $B^{\sigma}$-orbit on $\Omax^{\mu}$ intersects $Z^{\mu}$ exactly in one $T^{\sigma}$-orbit.
This together with Proposition~\ref{prop:paramet-borel-orb} implies the corollary.
\end{proof}

%%%%%%%%%%%%%%%%%%%%%%%%%%%%%%%%%%%%%%%%%%%%%%%%%%%%%%%%%%%%%%%%%%%%%%%%%%%%%%%%%%%%%%%%%%%%%%%%%%%%%%%%%%%%%%%%%

\section{Simple reflection operators on real orbits}
\label{sec:min&refl}

Let $\B(X)$ and $\B(X^\mu)$ denote the sets of $B$-orbits on $X$ and $B^\sigma$-orbits on $X^\mu$ respectively.

In this section, after recalling the action of any simple reflection of $W$ on  $\B(X)$ as defined by Knop in~\cite{knop:action}, we introduce an action of any such reflection on the set $\B(X^\mu)$.

Consider the set of simple roots given by $B$ and $T$.
For a simple root $\alpha$, let $s_\alpha\in W$ be the corresponding reflection and let $P_{\alpha}$ denote the corresponding minimal parabolic subgroup of $G$, that is
$$
P_{\alpha}= B\cup Bn_\alpha B,
$$
with $n_{\alpha}\in N_G(T)$ being any representative of $s_{\alpha}$.

\subsection{Complex case}
\label{subsec:min&refl-C}

Take $\OO$ in $\B(X)$  and consider its $P_{\alpha}$-span $P_{\alpha}\OO=P_{\alpha}x$.
Without loss of generality, we may assume that $\OO$ is open in $P_{\alpha}\OO$.

Let $(P_\alpha)_x$ be the stabilizer of $x$ in $P_\alpha$.
Note that there is a bijective correspondence between %the orbit sets
the set of $B$-orbits in $P_{\alpha}\OO$ and the set of $(P_{\alpha})_x$-orbits in $P_{\alpha}/B\simeq\PP^1$.
Explicitly, to a $B$-orbit $\OO'=Bx'$ with $x'=gx$ and $g\in P_{\alpha}$, there corresponds
the $(P_{\alpha})_x$-orbit of $y=g^{-1}(\infty)$,
where $\infty$ denotes the point $[1:0]\in\PP^1$ identified with the base point of $P_{\alpha}/B$.
This correspondence preserves codimensions and the closure ordering.

Let $Q_x$ denote the image of $(P_{\alpha})_x$ under the homomorphism $P_{\alpha}\to\Aut\PP^1=PGL_2$; it is the quotient of $(P_{\alpha})_x$ which acts effectively on~$\PP^1$.
Up to conjugation, one of the following cases may occur.
\begin{list}{}{}
  \item[(P)] $Q_x=PGL_2$ acts transitively on $\PP^1$. Then $P_{\alpha}\OO=\OO$ is a single $B$-orbit.
  \item[(U)] $U(PGL_2)\subseteq Q_x\subseteq B(PGL_2)$, where
       \begin{equation*}
         U(PGL_2)=\left\{
         \begin{bmatrix}
           1 & * \\
           0 & 1 \\
         \end{bmatrix}
         \right\}\subset PGL_2\quad\text{and}\quad
         B(PGL_2)=\left\{
         \begin{bmatrix}
           * & * \\
           0 & * \\
         \end{bmatrix}
         \right\}\subset PGL_2.
       \end{equation*}
       Here $Q_x$ acts transitively on $\AAAA^1=\PP^1\setminus\{\infty\}$.
       Then $P_{\alpha}\OO=\OO\cup\OO'$, where $\OO'$ is a $B$-orbit of codimension~1.
   \item[(T)] $Q_x=T(PGL_2)$, where
       $$
         T(PGL_2)=\left\{
         \begin{bmatrix}
           * & 0 \\
           0 & * \\
         \end{bmatrix}
         \right\}\subset PGL_2.
       $$
       Here $Q_x$ acts transitively on $\PP^1\setminus\{0,\infty\}$ and fixes both $0$ and~$\infty$.
       Then $P_{\alpha}\OO=\OO\cup\OO_1\cup\OO_2$, where $\OO_1$ and $\OO_2$ are $B$-orbits of codimension~1.
   \item[(N)] $Q_x=N(PGL_2)$, where
       \begin{align*}
         N(PGL_2)&=\left\{
         \begin{bmatrix}
           * & 0 \\
           0 & * \\
         \end{bmatrix},
         \begin{bmatrix}
           0 & * \\
           * & 0 \\
         \end{bmatrix}
         \right\}\subset PGL_2.
       \end{align*}
       Here $Q_x$ acts transitively on $\PP^1\setminus\{0,\infty\}$ and permutes $0$ and $\infty$. Then $P_{\alpha}\OO=\OO\cup\OO_0$, where $\OO_0$ is a $B$-orbit of codimension~1.
\end{list}

Given $\OO\in\B(X)$, the \emph{type} of the simple root $\alpha$ (with respect to $\OO$) will refer to the different situations above.

\begin{remark}
For the sake of convenience,
we will replace the group $Q_x$, in types (T) and (N), by its conjugate $PSO_{1,1}(\CC)$ and $PO_{1,1}(\CC)$, respectively.
Accordingly, the points $0=[0:1]$ and $\infty=[1:0]$ will be replaced by $1=[1:1]$ and $-1=[-1:1]$ in the above description.
\end{remark}

Following~{\cite[\S 4]{knop:action}}, we define an action of the simple reflection $s_{\alpha}$ on the set $\B(X)$ by setting, accordingly to the type of $\alpha$ with respect to $\OO\in\B(X)$:
\begin{list}{}{}
  \item[(P)] $\9\OO{s_\alpha}=\OO$;
  \item[(U)] $\9\OO{s_\alpha}=\OO'$ and $\9{\OO'}{s_\alpha}=\OO$;
  \item[(T)] $\9\OO{s_\alpha}=\OO$, $\9{\OO_1}{s_\alpha}=\OO_2$, and $\9{\OO_2}{s_\alpha}=\OO_1$;
  \item[(N)] $\9\OO{s_\alpha}=\OO$ and $\9{\OO_0}{s_\alpha}=\OO_0$.
  \end{list}
Here and below, we denote the action of a simple reflection by putting the respective superscript on the left.

In loc.~cit., Knop proves that these reflection operators satisfy the braid relations.

\begin{theorem}[Knop]\label{thm:knop-action}
The simple reflection operators introduced above define an action of the Weyl group $W$ on the set $\B(X)$ of Borel orbits.
\end{theorem}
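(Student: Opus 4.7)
The plan is to verify the braid relations $(s_\alpha s_\beta)^{m_{\alpha\beta}} = \mathrm{id}$ on $\B(X)$ for every pair of simple roots $\alpha,\beta$, where $m_{\alpha\beta}$ is the order of $s_\alpha s_\beta$ in $W$. Since $\9\OO{s_\gamma}$ depends only on the $P_\gamma$-span of $\OO$, both operators $s_\alpha$ and $s_\beta$ preserve the partition of $\B(X)$ induced by the orbits on $X$ of the rank-two standard parabolic subgroup $Q$ generated by $P_\alpha$ and $P_\beta$. It therefore suffices to check the braid relation separately on the $B$-orbits contained in each single $Q$-orbit $Y\subset X$.

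For such a $Y$, I would apply the Local Structure Theorem (Theorem~\ref{thm:loc-str}) to $Y$ viewed as a spherical variety under $Q$, obtaining a decomposition $Y \simeq \Ru{Q} \times Z'$, where $Z'$ is homogeneous under the Levi quotient $L=Q/\Ru{Q}$. The $B$-orbits on $Y$ are then in bijection with the $(B\cap L)$-orbits on $Z'$, compatibly with the candidate actions of $s_\alpha$ and $s_\beta$. Modding out the connected center of $L$, which acts trivially at the level of $B$-orbits, further reduces the problem to the case where $G$ itself is a connected semisimple group of rank two, of one of the types $\AAA_1\times\AAA_1$, $\AAA_2$, $\BBB_2$, or $\GGG_2$, acting on a spherical homogeneous variety.

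With this reduction in hand, the braid relation becomes a case-by-case examination of these four rank-two root systems. For each, one lists the possible spherical homogeneous $G$-varieties---equivalently, the possible $B$-orbit posets together with the types (P), (U), (T), (N) attached to each cover---and verifies directly that $(s_\alpha s_\beta)^{m_{\alpha\beta}}$ fixes every element of $\B(X)$. The $\AAA_1\times\AAA_1$ case is automatic since $s_\alpha$ and $s_\beta$ then act through disjoint rank-one factors, and the $\AAA_2$ case is a short chase. The main obstacle of this approach is the remaining $\BBB_2$ and especially $\GGG_2$ cases, where $m_{\alpha\beta}\in\{4,6\}$ forces one to track how the types of $\alpha$ and $\beta$ transform along a long chain of alternating reflections. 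A more conceptual alternative, pursued in~\cite{knop:action} and revisited for the real version in Section~\ref{sec:R-tangent}, is to construct the $W$-action geometrically from the conormal bundles inside $T^*X$ via Hamiltonian reduction, thereby bypassing the rank-two bookkeeping.
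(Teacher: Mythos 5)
The paper offers no proof of this statement: it is quoted from Knop's work \cite{knop:action}, so the comparison has to be with Knop's argument and with the material the paper itself recalls. Your first step---passing to the orbits of the standard parabolic $P_{\{\alpha,\beta\}}$, dividing by its unipotent radical, and thereby reducing to an acting group of semisimple rank two---is correct and is precisely Remark~\ref{rmk:rk-reduction}. The gap is everything after that: the verification of the braid relation for $\AAA_1\times\AAA_1$, $\AAA_2$, $\BBB_2$ and $\GGG_2$ is the entire mathematical content of the theorem, and you do not carry it out; you explicitly defer the $\BBB_2$ and $\GGG_2$ cases as ``the main obstacle''. Moreover, the plan as formulated---``list the possible spherical homogeneous $G$-varieties, equivalently the possible $B$-orbit posets together with types''---is not obviously executable: a semisimple group of rank two has infinitely many conjugacy classes of spherical subgroups (already in $SL_2$ the subgroups $C\ltimes U$ with $C\subset T$ finite cyclic are pairwise non-conjugate and spherical), so one must first prove that only finitely many labelled orbit configurations occur and then enumerate them; neither point is addressed, and this is exactly the kind of classification that Knop's argument is designed to avoid. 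Even your $\AAA_1\times\AAA_1$ case is not ``automatic because the reflections act through disjoint rank-one factors'': for $SL_2\times SL_2$ modulo the diagonal $SL_2$ the two operators act on one and the same two-element orbit set, and commutativity holds there for a different reason.

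Two further cautions. First, the ``more conceptual alternative'' you invoke does not prove the theorem as stated: the conormal-bundle construction recalled in Section~\ref{sec:Weyl-C} (Theorem~\ref{thm:B-orb&comp-C}, gathering Knop's Theorems 6.2 and 6.4) produces the $W$-action only on the subset $\Bmax(X)$ of Borel orbits of maximal rank, whereas Theorem~\ref{thm:knop-action} concerns all of $\B(X)$; passing from $\Bmax(X)$ to $\B(X)$ is a separate argument in \cite{knop:action} and cannot be waved away as a ``bypass of the rank-two bookkeeping''. Second, ``modding out the connected center of $L$'' is neither needed nor quite licit (the center may act on the Levi quotient of the $P_{\{\alpha,\beta\}}$-orbit with non-closed orbits, so the quotient is not a variety); it suffices to observe that only the semisimple rank of the acting group is relevant.
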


\begin{remark}\label{rmk:rk-reduction}
Sometimes it is convenient to study the reflection operators by induction on the semisimple rank of $G$. The following observation makes it possible.

Let $S$ be any set of simple roots and $P_S\supseteq B$ be the standard parabolic subgroup of $G$ defined by $S$, with the Levi decomposition $P_S=\Ru{(P_S)}\rtimes L_S$ such that $L_S\supseteq T$. Then $B_S=B\cap L_S$ is a Borel subgroup in $L_S$.

Let $Y=P_Sx$ be any $P_S$-orbit in $X$. Consider the quotient map $\pi: Y\to V=Y/\Ru{(P_S)}=L_Sz$, where $z$ is the image of $x$ under this map. The stabilizer $(L_S)_z$ is the image of $(P_S)_x$ under the homomorphism $P_S\to P_S/\Ru{(P_S)}\simeq L_S$. Observe that the $B$-orbits in $Y$ are the preimages under $\pi$ of the $B_S$-orbits in $V$ and this correspondence preserves the configuration types of Borel orbits vs.\ simple roots in $S$.
\end{remark}

\subsection{Symmetric spaces}\label{subsec:symm}

We give a characterization of the types of simple roots with respect to certain $B$-orbits (namely those of maximal rank, see~\S\ref{subsec:rank} in case $X$ is a symmetric space. This will be of importance later; see %Corollary~\ref{cor:action-symm}.
Theorem~\ref{thm:real-Knop-action} and its proof.

Before stating this characterization, we recall a few well-known facts about the structure theory of symmetric spaces; see e.g.\ \cite[\S26]{tim} for a review.

Let $X=G/H$ be a symmetric space, i.e., $H^{\circ}=(G^{\theta})^{\circ}$ for some non-identical involution $\theta$ of $G$.
As proved by Vust~\cite{vust}, symmetric $G$-spaces are spherical $G$-varieties.

Let $T_1$ be a maximal $\theta$-split torus, namely a torus on which $\theta$ acts as the inversion and which is maximal for this property, and
$L$ be the centralizer of $T_1$ in $G$. %=Z_G(T_1)$
Take a maximal torus $T_0\subset L^{\theta}$.
Then $L=T_1\cdot L^{\theta}$ and $T=T_0\cdot T_1$ is a maximal torus in $G$ which is $\theta$-stable.
Let us choose a parabolic subgroup $P$ of $G$ having $L$ as a Levi subgroup and such that $\theta(P)=P^-$
as well as a Borel subgroup $B$ of $G$ such that $T\subset B\subset P$.

The root system of $G$ with respect to $T$ is thus acted on by $\theta$ and the roots $\alpha$ of $(G,B,T)$ are subdivided into \emph{real} ($\theta(\alpha)=-\alpha$), \emph{imaginary} ($\theta(\alpha)=\alpha$),
and \emph{complex} ($\theta(\alpha)\ne\pm\alpha$) roots.
Real and complex positive roots are mapped by $\theta$ to negative roots.

Generators $e_{\alpha}$ of the root subspaces $\g_{\alpha}$ can be chosen in such a way that $\theta(e_{\alpha})=e_{\theta(\alpha)}$. The Lie algebra of $H$ is
$$
\h=\ttt_0\oplus\bigoplus_{\alpha=\theta(\alpha)}\g_{\alpha}\oplus
\bigoplus_{\alpha\ne\theta(\alpha)}\h_{\alpha,\theta(\alpha)}
$$
with $\h_{\alpha,\theta(\alpha)}$ being generated by $e_{\alpha}+e_{\theta(\alpha)}$ and  being ``diagonally'' embedded in $\g_{\alpha}\oplus\g_{\theta(\alpha)}$.
From the Iwasawa decomposition $\g=\Ru\p\oplus\ttt_1\oplus\h$, ones sees
that the base point $x_0$ (with stabilizer $H$) lies in $\Omax$ and that
$P$ is the maximal parabolic subgroup of $G$ stabilizing $\Omax$.

Let $n\in N_G(T)$ and $x=nx_0$.
Given a simple root $\alpha$ of $G$, we have $(P_{\alpha})_x=P_{\alpha}\cap nHn^{-1}$ and $nHn^{-1}$ is a symmetric subgroup of $G$ corresponding to the involution $\theta_x=\iota_n\theta\iota_n^{-1}$, with $\iota_n$ being the conjugation by $n$.
The torus $T$ is thus preserved by $\theta_x$ and $nT_1n^{-1}$ is a maximal $\theta_x$-split torus hence we may consider root types \emph{with respect to} $\theta_x$.

\begin{proposition}\label{prop:root-symm}
Let $X$ be a symmetric space. Take $n\in N_G(T)$ and let $x=nx_0$.
The types of simple roots with respect to the orbit $\OO=Bx$ are determined as follows.
\begin{enumerate}
\item
If $\alpha$ is a complex root w.r.t.\ $\theta_x$, then the root $\alpha$ is of type (U).
\item
If $\alpha$ is a real root w.r.t.\ $\theta_x$, then  the root $\alpha$ is of type (T) or (N).
Moreover,  $(\llll_{\alpha})_z=\llll_{\alpha}^{\theta_x}= [\llll_{\alpha},\llll_{\alpha}]^{\theta_x}\oplus(\ttt^{\alpha})^{\theta_x}$, where $\ttt^{\alpha}=\z(\llll_{\alpha})$.
\item
If $\alpha$ is an imaginary root w.r.t.\ $\theta_x$, then the root $\alpha$ is of type (P).
\end{enumerate}
\end{proposition}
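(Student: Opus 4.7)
The plan is to apply Remark~\ref{rmk:rk-reduction} with $S=\{\alpha\}$, reducing the question to the semisimple-rank-one action of $L_\alpha=\langle T,U_\alpha,U_{-\alpha}\rangle$ on the $L_\alpha$-orbit $V=L_\alpha z$, where $z$ is the image of $x$ under the quotient $P_\alpha\to L_\alpha$. The type of $\alpha$ with respect to $\OO=Bx$ is then determined by the image $Q_x\subseteq PGL_2$ of the stabilizer $(L_\alpha)_z$ acting on $L_\alpha/B_\alpha\simeq\PP^1$. On Lie algebras, $(\llll_\alpha)_z$ is the image of $(\p_\alpha)_x=\p_\alpha\cap\h_x$ under the projection $\p_\alpha\to\llll_\alpha$ killing $\Ru{(\p_\alpha)}$, and $\h_x=\text{Lie}(nHn^{-1})$ admits the same root-space decomposition as $\h$ with $\theta$ replaced by $\theta_x$ (up to nonzero scalars on the complex-pair generators, which play no role).

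I would then work through the three cases by tracking which summands of $\h_x$ meet $\p_\alpha$ nontrivially and project nontrivially to $\llll_\alpha$. If $\alpha$ is imaginary, then $\g_\alpha\oplus\g_{-\alpha}\subset\h_x$, hence $(\llll_\alpha)_z\supseteq\llll_\alpha$ and $Q_x=PGL_2$: type (P). If $\alpha$ is complex, so that $\theta_x(\alpha)$ is a negative root different from $-\alpha$, a pair-by-pair check of the complex generators $e_\beta+e_{\theta_x(\beta)}$ of $\h_x$ shows that only the pair containing $-\alpha$ lies in $\p_\alpha$, its other term $e_{-\theta_x(\alpha)}$ being a positive root vector different from $e_\alpha$ and hence killed by projection; this yields $e_{-\alpha}\in(\llll_\alpha)_z$, while no generator of $\h_x$ with nonzero $\g_\alpha$-component lies in $\p_\alpha$, so $e_\alpha\notin(\llll_\alpha)_z$. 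Thus $Q_x$ sits inside a Borel of $PGL_2$ and fixes exactly one point of $\PP^1$: type (U). If $\alpha$ is real, then $\theta_x$ preserves $\llll_\alpha$, acting as $-1$ on $\langle\alpha^\vee\rangle$ and swapping $\g_\alpha\leftrightarrow\g_{-\alpha}$; only the toral contribution $\ttt^{\theta_x}=(\ttt^\alpha)^{\theta_x}$ and the generator $e_\alpha+e_{-\alpha}$ of $\h_x$ survive projection, giving $(\llll_\alpha)_z=(\ttt^\alpha)^{\theta_x}\oplus\langle e_\alpha+e_{-\alpha}\rangle=\llll_\alpha^{\theta_x}$. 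The identity with $[\llll_\alpha,\llll_\alpha]^{\theta_x}\oplus(\ttt^\alpha)^{\theta_x}$ is then immediate from $[\llll_\alpha,\llll_\alpha]^{\theta_x}=\langle e_\alpha+e_{-\alpha}\rangle$. The image $Q_x$ consequently preserves the pair of $\theta_x$-fixed points of $\PP^1$ either pointwise or setwise, yielding type (T) or (N).

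The main obstacle is the bookkeeping in the complex case: since $\theta_x$ does not preserve $\p_\alpha$, one must carefully enumerate, case by case on the position of $\beta$, which generators $e_\beta+e_{\theta_x(\beta)}$ of $\h_x$ actually lie in $\p_\alpha$ before projection. The (T) versus (N) distinction in the real case is not detected on Lie algebras and reflects whether $(L_\alpha)_z$ contains an element swapping the two $\theta_x$-fixed points of $\PP^1$; this is a standard rank-one calculation and, as the proposition only asserts ``type (T) or (N)'', need not be pinned down here.
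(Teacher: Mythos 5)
Your proposal follows essentially the same route as the paper: reduce to semisimple rank one via Remark~\ref{rmk:rk-reduction} with $S=\{\alpha\}$, then read off the type of $\alpha$ from the image of $\h_x\cap\p_\alpha$ under the projection $\p_\alpha\to\llll_\alpha$, using the explicit root-space decomposition of $\h_x$. The imaginary and real cases match the paper's argument (in the imaginary case you write $(\llll_\alpha)_z\supseteq\llll_\alpha$ where only $\supseteq[\llll_\alpha,\llll_\alpha]$ is warranted, but that already gives $Q_x=PGL_2$, so no harm).

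There is one genuine flaw in the complex case: you assert that $\theta_x(\alpha)$ is a \emph{negative} root different from $-\alpha$, and your entire bookkeeping rests on that sign. The normalization ``complex positive roots are mapped to negative roots'' holds for the original $\theta$ by the choice of $B\subset P$ with $\theta(P)=P^-$, but not for $\theta_x=\iota_n\theta\iota_n^{-1}$ with arbitrary $n\in N_G(T)$: on roots $\theta_x=w\theta w^{-1}$, and $w$ applied to a negative root can well be positive. When $\theta_x(\alpha)>0$ the roles flip: the generator $e_\alpha+e_{\theta_x(\alpha)}$ of $\h_x$ lies in $\p_\alpha$ and projects to $e_\alpha$, while no element of $\h_x\cap\p_\alpha$ has nonzero $\g_{-\alpha}$-component, so one gets $U_\alpha\subseteq(L_\alpha)_z$ rather than $U_{-\alpha}$. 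The conclusion (type (U)) is unchanged, and indeed the paper's proof covers both possibilities at once (``one of the root subgroups $U_\alpha$ or $U_{-\alpha}$, depending on which of $U_{\pm\theta_x(\alpha)}$ is contained in $\Ru{(P_\alpha)}$''). So the gap is easily repaired by adding the symmetric subcase, but as written the argument silently excludes half of the orbits $\OO=Bx$.
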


\begin{proof}
Recall Remark~\ref{rmk:rk-reduction} and the notation set up therein with $S=\{\alpha\}$.

(1) Note that in this case $(L_{\alpha})_z$ contains one of the root subgroups $U_{\alpha}$ or $U_{-\alpha}$, depending on which of $U_{\pm\theta_x(\alpha)}$ is contained in $\Ru{(P_{\alpha})}$.

(2) Note that  $L_{\alpha}$ is $\theta_x$-stable, $(L_{\alpha})_z$ equals $L_{\alpha}^{\theta_x}$ up to connected components, and
$\theta_x$ preserves the derived subgroup $[L_{\alpha},L_{\alpha}]$ of $L_\alpha$ and the connected center $T^{\alpha}$ of $L_{\alpha}$ as well.

(3) Clearly, $L_{\alpha}$ is $\theta_x$-stable and $[L_{\alpha},L_{\alpha}]$ is pointwise fixed by $\theta_x$
hence $(L_{\alpha})_z\supset[L_{\alpha},L_{\alpha}]$.
\end{proof}

\subsection{Real case}
\label{subsec:min&refl-R}

To define an action of the simple reflections on the set $\B(X^\mu)$, we start by describing the configuration of the $B^{\sigma}$-orbits in $P_{\alpha}^{\sigma}\OO$ for a given $\OO$ in $\B(X^\mu)$.

Take $\OO$ in $\B(X^\mu)$ and consider its $P_{\alpha}^{\sigma}$-span $P_{\alpha}^{\sigma}\OO=P_{\alpha}^{\sigma}x$.
Without loss of generality, we assume that $\OO$ is open in $P_{\alpha}^{\sigma}\OO$.

Similarly as in the complex case, there is a bijective correspondence between the $B^{\sigma}$-orbits in $P_{\alpha}^{\sigma}\OO$ and the $(P_{\alpha})^{\sigma}_x$-orbits in $P_{\alpha}^{\sigma}/B^{\sigma}\simeq\PP^1(\RR)$.
The stabilizer $(P_{\alpha})^{\sigma}_x$ acts on $\PP^1(\RR)$ via its image $R_x$ under the homomorphism
$P_\alpha^\sigma\rightarrow PGL_2(\RR)$.

Note that $R_x$ is not just the real locus of the image $Q_x$ of $(P_\alpha)_x$ in $PGL_2$: the Lie groups $R_x$ and $Q_x(\RR)$ coincide only up to connected components.
More precisely, $(P_{\alpha})_x$ projects onto an algebraic subgroup of the Levi subgroup $L_{\alpha}$ of $P_\alpha$ containing $T$ whereas $R_x$ is the image of the real locus of this subgroup of $L_\alpha$.
Since $L_{\alpha}$ is isomorphic to the direct product of a torus with $SL_2$, $GL_2$ or $PGL_2$,
we may consider the real locus of an algebraic subgroup in $SL_2$, $GL_2$ or $PGL_2$ instead.

The types (P), (U), (T) and (N) may subdivide into ``real forms'', depending on the conjugacy class of $R_x$.
Specifically, in types (P) and (U), there is no subdivision, namely the configuration of $B^{\sigma}$-orbits is the same as in the complex case, whereas the types (T) and (N) both subdivide into three subcases.

\begin{list}{}{}
  \item[(T0)] $R_x=PSO_2(\RR)$ acts on $\PP^1(\RR)$ transitively. Then $P_{\alpha}^{\sigma}\OO=\OO$ is a single $B^{\sigma}$-orbit.
  \item[(T1)] $R_x=PSO_{1,1}(\RR)\times\langle r\rangle_2$, with $\langle r\rangle_2$ being the group (of order $2$) generated by
       \begin{equation*}
         r=\begin{bmatrix}
             0 & 1 \\
             1 & 0 \\
           \end{bmatrix}.
       \end{equation*}
       Here $R_x$ acts transitively on $\PP^1(\RR)\setminus\{\pm1\}$ and fixes $1$ and~$-1$.
       Therefore $P_{\alpha}^{\sigma}\OO=\OO\cup\OO_1\cup\OO_2$, where $\OO_1$ and $\OO_2$ are $B^{\sigma}$-or\-bits of codimension~1.
  \item[(T2)] $R_x=PSO_{1,1}(\RR)$ acts on $\PP^1(\RR)$ with four orbits: two fixed points $1$ and $-1$ and two open semicircles. Hence $P_{\alpha}^{\sigma}\OO=\OO\cup\OO'\cup\OO_1\cup\OO_2$, where the $B^{\sigma}$-orbits $\OO$ and $\OO'$ are open
  whereas $\OO_1$ and $\OO_2$ have codimension~1.
  \item[(N0)] $R_x=PO_2(\RR)$ acts transitively on $\PP^1(\RR)$. Then $P_{\alpha}^{\sigma}\OO=\OO$ is a single $B^{\sigma}$-orbit.
  \item[(N1)] $R_x=P(S)O_{1,1}(\RR)\rtimes\langle n\rangle_2$, where
       \begin{equation*}
         n=\begin{bmatrix}
             0 &-1 \\
             1 & 0 \\
           \end{bmatrix}.
       \end{equation*}
       Here $R_x$ acts transitively on $\PP^1(\RR)\setminus\{\pm1\}$ and permutes $1$ and~$-1$.
       Therefore $P_{\alpha}^{\sigma}\OO=\OO\cup\OO_0$, where $\OO_0$ is a $B^{\sigma}$-orbit of codimension~1.
  \item[(N2)] $R_x=PO_{1,1}(\RR)$ acts on $\PP^1(\RR)$ with three orbits: $\{\pm1\}$ and two open semicircles. Therefore $P_{\alpha}^{\sigma}\OO=\OO\cup\OO'\cup\OO_0$, where the $B^{\sigma}$-orbits $\OO$ and $\OO'$ are open and $\OO_0$ has codimension~1.
\end{list}

The action of the simple reflection $s_{\alpha}$ on the set $\B(X^\mu)$ is defined as in the complex case with the following addition: the open orbits $\OO$ and $\OO'$ in (T2) and (N2) are interchanged. %in cases (T2) and (N2).
Specifically, $s_\alpha$ acts on the set of $B^{\sigma}$-orbits in $P_{\alpha}^{\sigma}\OO$ as follows.

\begin{list}{}{}
  \item[(P)]  $\9\OO{s_\alpha}=\OO$;
  \item[(U)]  $\9\OO{s_\alpha}=\OO'$ and $\9{\OO'}{s_\alpha}=\OO$;
  \item[(T0)] $\9\OO{s_\alpha}=\OO$;
  \item[(T1)] $\9\OO{s_\alpha}=\OO$, $\9{\OO_1}{s_\alpha}=\OO_2$, and $\9{\OO_2}{s_\alpha}=\OO_1$;
  \item[(T2)] $\9\OO{s_\alpha}=\OO'$, $\9{\OO'}{s_\alpha}=\OO$, $\9{\OO_1}{s_\alpha}=\OO_2$, and $\9{\OO_2}{s_\alpha}=\OO_1$;
  \item[(N0)] $\9\OO{s_\alpha}=\OO$;
  \item[(N1)] $\9\OO{s_\alpha}=\OO$ and $\9{\OO_0}{s_\alpha}=\OO_0$;
  \item[(N2)] $\9\OO{s_\alpha}=\OO'$, $\9{\OO'}{s_\alpha}=\OO$, and $\9{\OO_0}{s_\alpha}=\OO_0$.
  \end{list}

In the next sections, we will determine when the simple reflections acting  on the set of $B^{\sigma}$-orbits in the real loci of $B$-orbits of maximal rank
define an action of the (very little) Weyl group.

\begin{question}\label{que:W-act-real}
When do the reflection operators define an action of $W$ on the whole set $\B(X^{\mu})$ of $B^{\sigma}$-orbits in $X^{\mu}$?
\end{question}

To address this question and for other purposes, it is helpful to extend Remark~\ref{rmk:rk-reduction} over $\RR$.

\begin{remark}\label{rmk:rk-red-real}
Observe that for any set $S$ of simple roots the standard parabolic subgroup $P_S$, its unipotent radical, and the standard Levi subgroup $L_S$ are defined over $\RR$. Also, for any $x\in X^{\mu}$, the varieties $Y=P_Sx$, $V=Y/\Ru{(P_S)}$, and the quotient map $\pi:Y\to V$ are defined over $\RR$.
Moreover, $P_S^{\sigma}x$ is open and closed in $Y^{\mu}$ and the $B^{\sigma}$-orbits in $P_S^{\sigma}x$ are the preimages under $\pi|_{Y^{\mu}}$ of the $B_S^{\sigma}$-orbits in $L_S^{\sigma}z\subseteq V^{\mu}$, where $z=\pi(x)$.

Indeed, such a preimage is a single $B^{\sigma}$-orbit, because $\Ru{(P_S)}^{\sigma}$ acts transitively on the real loci of the fibers of $\pi$, which follows e.g.\ from the vanishing of the Galois cohomology of unipotent groups, see \cite[I.5.4, Cor.\,1 and III.2.1, Prop.\,6]{Galois}.
Again, the correspondence between the $B^{\sigma}$-orbits in $P_S^{\sigma}x$ and the $B_S^{\sigma}$-orbits in $L_S^{\sigma}z$ preserves the configuration types of real Borel orbits vs.\ simple roots in $S$.
\end{remark}

In view of the above discussion on the $P_{\alpha}^{\sigma}$-spans of  $B^{\sigma}$-orbits of $X^\mu$, the next proposition immediately follows from the fact that $G^{\sigma}$ is generated by its minimal parabolic subgroups.

\begin{proposition}\label{prop:refl&G-orb}
Two open $B^{\sigma}$-orbits $\OO$ and $\OO'$ of $X^{\mu}$ lie in one and the same $G^{\sigma}$-orbit if and only if $\OO'$ is obtained from $\OO$ by a sequence of reflection operators of type $(T)$ or $(N)$ with respect to $\Omax$.
\end{proposition}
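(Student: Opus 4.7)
For the $(\Leftarrow)$ direction, if $\OO'=\9{\OO}{s_\alpha}$ via a reflection of type (T) or (N), then by the classification in \S\ref{subsec:min&refl-R} both $\OO$ and $\OO'$ are $B^\sigma$-orbits in the common $P_\alpha^\sigma$-orbit $P_\alpha^\sigma\OO$, hence lie in a single $G^\sigma$-orbit; iterating handles arbitrary sequences.

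For the $(\Rightarrow)$ direction, the key step is to establish, for any open $B^\sigma$-orbit $\OO\subseteq\Omax^\mu$ and any simple root $\alpha$, the identity
\[
P_\alpha^\sigma\OO\cap\Omax^\mu \;=\; \begin{cases} \OO & \text{in types (P), (U), (T0), (T1), (N0), (N1),}\\ \OO\cup\9{\OO}{s_\alpha} & \text{in types (T2) and (N2).}\end{cases}
\]
This is immediate from the enumeration of $B^\sigma$-orbits in $P_\alpha^\sigma\OO$ of \S\ref{subsec:min&refl-R}, once one notes that a $B^\sigma$-orbit meets $\Omax^\mu$ precisely when it has (real) dimension $\dim X$: any such orbit is contained in a complex $B$-orbit of the same dimension, which must be $\Omax$.

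Now let $\Omega$ denote the orbit of $\OO$ under all reflection operators of types (T) and (N), and set $\tilde Y_\Omega:=\bigcup_{\OO''\in\Omega}\OO''$, an open subset of $\Omax^\mu$. The displayed formula gives $P_\alpha^\sigma\tilde Y_\Omega\cap\Omax^\mu\subseteq\tilde Y_\Omega$ for every simple $\alpha$, since any open $B^\sigma$-orbit reached from some $\OO''\in\Omega$ via $P_\alpha^\sigma$ is either $\OO''$ itself or $\9{\OO''}{s_\alpha}$, both of which belong to $\Omega$. Because $G^\sigma$ is generated by its minimal parabolic subgroups $P_\alpha^\sigma$, iteration yields $G^\sigma\tilde Y_\Omega\cap\Omax^\mu\subseteq\tilde Y_\Omega$. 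Applied to $\OO'\subseteq G^\sigma\OO\cap\Omax^\mu$ this gives $\OO'\in\Omega$, as required.

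The only substantive calculation is the displayed formula, which amounts to reading off the $B^\sigma$-orbit structure of $P_\alpha^\sigma\OO$ case by case in each of the six (T)/(N) subtypes; no serious obstacle is expected, as the remainder of the argument is purely formal.
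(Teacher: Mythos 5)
Your $(\Leftarrow)$ direction and your displayed case-by-case identity are correct, and your overall strategy is the same as the paper's (whose own proof is just the one-line remark that the claim follows from the discussion of $P_\alpha^\sigma$-spans together with the fact that $G^\sigma$ is generated by its minimal parabolic subgroups). The problem is precisely the step you dismiss as ``purely formal.'' From $P_\alpha^\sigma\widetilde{Y}_\Omega\cap\Omax^\mu\subseteq\widetilde{Y}_\Omega$ for every simple $\alpha$ one cannot conclude $G^\sigma\widetilde{Y}_\Omega\cap\Omax^\mu\subseteq\widetilde{Y}_\Omega$ by ``iteration'': writing $g=p_1\cdots p_k$ with $p_i\in P_{\alpha_i}^\sigma$ and taking $y\in\widetilde{Y}_\Omega$, the intermediate points $p_i\cdots p_k\,y$ may fall into lower-dimensional $B^\sigma$-orbits, i.e.\ leave $\Omax^\mu$, and re-enter $\Omax^\mu$ only at the end; your hypothesis controls the $P_\alpha^\sigma$-spans of \emph{open} orbits only and says nothing about which open orbits lie above a given non-open one. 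The abstract implication is simply false: if a group $G$ generated by subgroups $P_i$ acts on a set $S$, $A\subseteq S$, and $\Omega\subseteq A$ satisfies $P_i\Omega\cap A\subseteq\Omega$ for all $i$, it need not follow that $G\Omega\cap A\subseteq\Omega$ (take $S=\ZZ$, $G=\ZZ$, $P_1=2\ZZ$, $P_2=3\ZZ$, $A=\{0,5\}$, $\Omega=\{0\}$). So the genuinely nontrivial content of the $(\Rightarrow)$ direction is exactly the point you skip.

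The gap can be closed by a perturbation argument exploiting that $\Omax^\mu$ is open and dense in $X^\mu$ and that the target orbit is open. Pick $x\in\OO$, $x'=gx\in\OO'$ and write $g=p_1\cdots p_k$ with $p_i\in P_{\alpha_i}^\sigma$. For each $i$ the complex morphism $(q_1,\dots,q_k)\mapsto q_i\cdots q_k\,x$ from $P_{\alpha_1}\times\dots\times P_{\alpha_k}$ to $X$ has image containing $Bx=\Omax$, so the preimage of $X\setminus\Omax$ is a proper Zariski-closed subset, whose real locus is closed and nowhere dense in $P_{\alpha_1}^\sigma\times\dots\times P_{\alpha_k}^\sigma$ (the real points of the product are Zariski dense since $G$ is split). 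Hence one can choose $(p_1',\dots,p_k')$ arbitrarily close to $(p_1,\dots,p_k)$ with all partial products $p_i'\cdots p_k'\,x$ in $\Omax^\mu$ and, by continuity and openness of $\OO'$, with $p_1'\cdots p_k'\,x\in\OO'$. Now every consecutive pair of orbits $B^\sigma(p_{i+1}'\cdots p_k'x)$ and $B^\sigma(p_i'\cdots p_k'x)$ consists of two open $B^\sigma$-orbits in a common $P_{\alpha_i}^\sigma$-span, so by your displayed identity they coincide or are exchanged by a type (T2)/(N2) reflection; iterating \emph{this} chain gives $\OO'\in\Omega$. With this repair (or some equivalent handling of the intermediate non-open orbits) your argument becomes complete; without it, the key assertion is unproved.
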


%%%%%%%%%%%%%%%%%%%%%%%%%%%%%%%%%%%%%%%%%%%%%%%%%%%%%%%%%%%%%%%%%%%
\section{Action of the very little Weyl group}\label{sec:braid-rel}

In this section, we start by collecting a few notions from the theory of spherical varieties, after which
we determine when the actions of the simple reflections introduced in Section~\ref{sec:min&refl} define an action of the very little Weyl group of the given spherical homogeneous space $X$.

\subsection{Recalls on spherical varieties}\label{subsec:recalls}

For the recalls freely made in this subsection, the reader may consult~\cite{tim} for a survey.

Let $\Xi(X)$ denote the weight lattice of $X$, that is the set of weights of the $B$-eigenfunctions in the function field $\CC(X)$ of $X$.
With the notation of \S\ref{sec:open-Borel-orb}, $\Xi(X)$ is nothing but the character lattice of $A$.
The rank of the lattice $\Xi(X)$ is called \emph{the rank} of $X$.

Set $\NQ(X)=\Hom_{\ZZ}(\Xi(X),\QQ)$.

Let $\V(X)$ be the set of $G$-invariant $\QQ$-valued discrete valuations of $\CC(X)$. Each element $v$ of $\V(X)$ yields an element $\rho_v$ of $\NQ(X)$ given by the map
$\chi \mapsto v(f)$, where $f$ denotes a $B$-eigenfunction of weight $\chi$ (uniquely determined up to a scalar in $\CC^\times$). The map $v\mapsto \rho_v$ is injective; this map identifies $\V(X)$ with a convex co-simplicial cone (still denoted $\V(X)$) in $\NQ(X)$.
There thus exists a linearly independent set $\Sigma_X$ of primitive elements in the lattice $\Xi(X)$ such that
$$
\V(X)=\left\{\rho\in \NQ(X): \left\langle\rho,\gamma\right\rangle\leq 0, \, \forall\gamma\in\Sigma_X\right\}.
$$
The set $\Sigma_X$ is called the set of \emph{spherical roots of $X$}.
It is a basis of a certain root system in~$\Xi(X)$.

Recall that $L$ denotes the Levi subgroup of the parabolic subgroup $P$ of $G$ stabilizing the open $B$-orbit $\Omax$ of $X$ such that $L\supset T$.

\begin{proposition}[{\cite[Prop.\,2.3, Cor.\,5.3]{knop:loc}}]
\label{prop:types-spherroots}
The types of simple roots w.r.t.\ $\Omax$ are characterized by the spherical roots of $X$ as follows.
\begin{enumerate}
\item
  $\alpha$ is of type $(P)$ if and only if $\alpha$ is a root of $L$.
  \item $\alpha$ is of type $(U)$ if and only if $\alpha\notin\Sigma_X\cup\frac12\Sigma_X$.
  \item $\alpha$ is of type $(T)$ if and only if $\alpha\in\Sigma_X$.
  \item $\alpha$ is of type $(N)$ if and only if $2\alpha\in\Sigma_X$.
\end{enumerate}
\end{proposition}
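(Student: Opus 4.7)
The plan is to prove the four equivalences by combining the local structure theorem for $\Omax$ with the classification of rank-at-most-one spherical $\text{SL}_2$-varieties, via the reduction of Remark~\ref{rmk:rk-reduction}. Part~(1) is direct: the simple root $\alpha$ is a root of $L$ iff both $U_{\pm\alpha}\subseteq P$, iff $P_\alpha\subseteq P$. Since $P$ is by definition the stabilizer of $\Omax$, this is precisely the condition $P_\alpha\Omax=\Omax$ characterizing type~(P).

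For parts~(2)--(4), assume $\alpha$ is not a root of $L$, so $P_\alpha\Omax\supsetneq\Omax$. I would apply Remark~\ref{rmk:rk-reduction} with $S=\{\alpha\}$: set $Y=P_\alpha\Omax$ and $V=Y/\Ru{(P_\alpha)}$ with quotient $\pi\colon Y\to V$. Then $V$ is a homogeneous spherical $L_\alpha$-variety with open $B_\alpha$-orbit $\pi(\Omax)$, and the configuration of $B$-orbits inside $P_\alpha\Omax$ matches the configuration of $B_\alpha$-orbits inside $V$; in particular the type of $\alpha$ is the same on both sides. Modulo its central torus, $L_\alpha$ is essentially $\text{SL}_2$, and we are reduced to classifying rank-$\le 1$ homogeneous spherical $\text{SL}_2$-spaces $V$ of this shape: up to a central twist these are $\text{SL}_2/U$ (horospherical, $\Sigma_V=\varnothing$, one codimension-one $B_\alpha$-orbit swapped with the open orbit by $s_\alpha$), $\text{SL}_2/T$ (rank one, $\Sigma_V=\{\alpha\}$, two codimension-one $B_\alpha$-orbits swapped by $s_\alpha$), and $\text{SL}_2/N(T)$ (rank one, $\Sigma_V=\{2\alpha\}$, one codimension-one $B_\alpha$-orbit fixed by $s_\alpha$), matching types (U), (T), and (N) respectively. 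The assertions about $B_\alpha$-orbits and $s_\alpha$-behaviour follow from a direct computation in $\PP^1$ and $\PP^1\times\PP^1$.

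It remains to identify $\Sigma_V$ with the part of $\Sigma_X$ supported on $\ZZ\alpha$, which is the compatibility of spherical roots under the quotient $\pi$: the weight lattice $\Xi(V)$ injects into $\Xi(X)$ via pullback of eigenfunctions, and a $G$-invariant valuation of $X$ coming from a $B$-stable prime divisor of $\overline{P_\alpha\Omax}\setminus\Omax$ descends to an $L_\alpha$-invariant valuation of $V$ with the same restriction to $\Xi(V)$, and conversely. This yields $\Sigma_V\subseteq\Sigma_X\cup\tfrac12\Sigma_X$ (up to primitivity), thereby pinning down type~(U) by $\alpha\notin\Sigma_X\cup\tfrac12\Sigma_X$, type~(T) by $\alpha\in\Sigma_X$, and type~(N) by $2\alpha\in\Sigma_X$. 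The main obstacle is exactly this last comparison: one must verify that primitivity is preserved by $\Xi(V)\hookrightarrow\Xi(X)$, so that the distinction between $\alpha\in\Sigma_X$ (type (T)) and $2\alpha\in\Sigma_X$ (type (N)) is not blurred. This is precisely the content of~\cite[Prop.~2.3 and Cor.~5.3]{knop:loc}; the remaining steps are direct applications of the local structure theorem.
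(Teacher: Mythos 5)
The paper offers no proof of this proposition at all: it is quoted verbatim from Knop's localization paper, with \cite[Prop.\,2.3, Cor.\,5.3]{knop:loc} as the entire justification. Measured against that, your proposal is fine as far as it goes, and its elementary parts are correct: part~(1) is exactly the observation that $P_\alpha\Omax=\Omax$ iff $P_\alpha\subseteq P$ iff $\alpha$ is a root of $L$; the reduction via Remark~\ref{rmk:rk-reduction} with $S=\{\alpha\}$ is legitimate because $P_\alpha\Omax$ is a single $P_\alpha$-orbit, so $V$ is $L_\alpha$-homogeneous and the type of $\alpha$ is read off from the effective quotient $Q_x\subseteq PGL_2$; and the rank-one classification ($SL_2/U$, $SL_2/T$, $SL_2/N(T)$ with spherical roots $\varnothing$, $\{\alpha\}$, $\{2\alpha\}$) is standard. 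But you should be clear that your argument is not independent of the citation: the entire content of the proposition is concentrated in the step you defer, namely that the spherical roots of the rank-one localization $V$ are exactly the spherical roots of $X$ proportional to $\alpha$, with the correct primitive normalization, so that (T) versus (N) is decided by whether $\alpha$ or $2\alpha$ lies in $\Sigma_X$. Stating only an inclusion $\Sigma_V\subseteq\Sigma_X\cup\frac12\Sigma_X$ ``up to primitivity'' would not even suffice for the forward implications; one needs the exact matching in both directions.

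One substantive imprecision in your sketch of that deferred step: $G$-invariant valuations of $\CC(X)$ do not ``come from $B$-stable prime divisors of $\overline{P_\alpha\Omax}\setminus\Omax$''. Those divisors are colors, and their associated functionals $\rho$ are a different ingredient of the combinatorics; the comparison of $\V(X)$ with $\V(V)$ in Knop's localization goes through degenerations/limits under one-parameter subgroups (or equivalently restriction of invariant valuations along the localization functor), not through colors. Since you explicitly hand this step back to \cite{knop:loc}, this does not break the logical structure of your write-up, but the heuristic you give for why the citation holds is not the right mechanism.
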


The valuation cone $\V(X)$ of $X$ is a fundamental domain for a finite crystallographic reflection group, \emph{the little Weyl group of $X$}, which is the Weyl group of the root system generated by $\Sigma_X$.

Let $W_{(X)}$ be the stabilizer of the orbit $\Omax$ with respect to Knop's action of the Weyl group $W$ (Theorem~\ref{thm:knop-action}). Note that the Weyl group $W_L$ of $L$ is contained in $W_{(X)}$.
The group $W_{(X)}$ is the semi-direct product of $W_L$ with the group
$$
W_X=\left\{w\in W: w(\Omax)=\Omax\text{ and } w(S_X)=S_X\right\},
$$
where $S_X$ is the set of simple roots of $L$.
According to Theorem 6.2 in~\cite{knop:action}, $W_X$ is the little Weyl group of $X$.

We call the subgroup of $W$ generated by the simple reflections contained in $W_X$ \emph{the very little Weyl group of $X$} and denote it by $\vlW_X$.

A set of generators for $W_X$ was first given by Brion (see e.g. Theorem 7 in~\cite{brion}). Thanks to this result, we can state the following.

\begin{theorem}\label{thm:brion-little-Weyl}
The very little Weyl group $\vlW_X$ of $X$ is generated by the simple reflections associated to simple roots of $G$ belonging to $\Sigma_X$ or $\frac{1}{2}\Sigma_X$.
\end{theorem}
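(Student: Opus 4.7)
The plan is to identify exactly which simple reflections $s_\alpha$ of $W$ lie in $W_X$; by the very definition of $\vlW_X$ they will then generate it, and the theorem will follow.

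For the necessary direction, I would argue as follows. Any $s_\alpha\in W_X\subseteq W_{(X)}$ must fix $\Omax$ under Knop's action, and inspecting the case-by-case formulas in \S\ref{subsec:min&refl-C} forces $\alpha$ to be of type (P), (T) or (N). Proposition~\ref{prop:types-spherroots} translates these cases respectively to $\alpha\in S_X$, $\alpha\in\Sigma_X$ and $2\alpha\in\Sigma_X$. Type (P) is ruled out by the semi-direct product decomposition $W_{(X)}=W_L\rtimes W_X$ recalled just before the statement: if $\alpha\in S_X$ then $s_\alpha\in W_L$, so $s_\alpha\notin W_X$ (equivalently, $s_\alpha(\alpha)=-\alpha\notin S_X$ shows directly that $s_\alpha$ does not preserve $S_X$). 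The surviving possibilities are precisely $\alpha\in\Sigma_X\cup\frac{1}{2}\Sigma_X$.

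For the sufficient direction, I would invoke Brion's theorem (Theorem~7 of~\cite{brion}, cited right above the statement), which realizes $W_X\subseteq W$ as the subgroup generated by the Weyl reflections $s_\sigma$ attached to the spherical roots $\sigma\in\Sigma_X$. Since a Weyl reflection depends only on the orthogonal hyperplane, one has $s_\sigma=s_\alpha$ whenever $\sigma\in\{\alpha,2\alpha\}$ for a simple root $\alpha$ of $G$. Hence $s_\alpha\in W_X$ as soon as $\alpha\in\Sigma_X\cup\frac{1}{2}\Sigma_X$, which completes the identification of the simple reflections contained in $W_X$.

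The delicate step is the sufficient direction: it hinges on the canonical embedding $W_X\hookrightarrow W$ under which the abstract Weyl reflection attached to a spherical root $\sigma$ is realized by the honest Weyl reflection $s_\sigma\in W$. This identification is exactly the content of Brion's result and is thus imported rather than reproved here; without it one would need to descend to the action of $W_X$ on $\NQ(X)$ and canonically lift it back to an action on the full character lattice of $T$, which is precisely what Brion's analysis provides.
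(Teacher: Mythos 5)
Your proof is correct and follows essentially the same route as the paper, which states the theorem as an immediate consequence of Brion's Theorem~7 without further argument; you usefully make explicit the converse inclusion (that no other simple reflection lies in $W_X$) via Knop's type classification, Proposition~\ref{prop:types-spherroots}, and the requirement $s_\alpha(S_X)=S_X$ in the definition of $W_X$. One small caveat: Brion's generator attached to a spherical root $\sigma$ is not in general a reflection of $W$ (e.g.\ it is $s_\alpha s_\beta$ for $\sigma=\alpha+\beta$ with $\alpha\perp\beta$), but in the only case you actually use, namely $\sigma\in\{\alpha,2\alpha\}$ with $\alpha$ simple, the generator is indeed $s_\alpha$, so the argument stands.
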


\begin{corollary}
The group $\vlW_X$ is generated by the simple reflections associated to simple roots of $G$ of type $(T)$ or $(N)$ w.r.t.\ $\Omax$.
\end{corollary}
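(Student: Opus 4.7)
The plan is to deduce the statement as an essentially immediate consequence of Theorem~\ref{thm:brion-little-Weyl} combined with the dictionary in Proposition~\ref{prop:types-spherroots}. First I would note that Theorem~\ref{thm:brion-little-Weyl} identifies the generating set of $\vlW_X$ as the collection of simple reflections $s_\alpha$ where $\alpha$ is a simple root of $G$ lying in $\Sigma_X\cup\tfrac12\Sigma_X$. Thus it suffices to show that, among simple roots of $G$, membership in $\Sigma_X\cup\tfrac12\Sigma_X$ coincides exactly with being of type (T) or (N) with respect to $\Omax$.

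Next I would invoke items (3) and (4) of Proposition~\ref{prop:types-spherroots}: a simple root $\alpha$ is of type (T) iff $\alpha\in\Sigma_X$, and of type (N) iff $2\alpha\in\Sigma_X$, i.e.\ $\alpha\in\tfrac12\Sigma_X$. Taking the union of these two cases gives precisely the simple roots contained in $\Sigma_X\cup\tfrac12\Sigma_X$. Conversely, if $\alpha$ is a simple root not of type (T) or (N), then by parts (1) and (2) of the same proposition it is either of type (P), hence a root of $L$ (so $\alpha\notin\Sigma_X\cup\tfrac12\Sigma_X$ since spherical roots are not roots of $L$), or of type (U), and in that case $\alpha\notin\Sigma_X\cup\tfrac12\Sigma_X$ by definition.

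Combining these equivalences with Theorem~\ref{thm:brion-little-Weyl} yields the description of $\vlW_X$ asserted in the corollary. There is no genuine obstacle here: the argument is purely a translation between the valuation-cone characterization of the spherical roots and the orbit-theoretic characterization of the types of simple roots, both of which are already at our disposal.
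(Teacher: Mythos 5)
Your proposal is correct and follows exactly the paper's own argument: the paper also deduces the corollary directly from Theorem~\ref{thm:brion-little-Weyl} together with the type/spherical-root dictionary of Proposition~\ref{prop:types-spherroots}. Your added check of the converse direction via parts (1) and (2) is a harmless elaboration of the same one-line argument.
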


\begin{proof}
This follows from Theorem~\ref{thm:brion-little-Weyl} and Proposition~\ref{prop:types-spherroots}.
\end{proof}

%-------------------------------------------------------------
\subsection{Braid relations}\label{subsec:braid}

Let $\Bopen(X^{\mu})$ denote the set of $B^{\sigma}$-orbits in $\Omax^{\mu}$, i.e., open $B^{\sigma}$-orbits in $X^{\mu}$.

\begin{theorem}\label{thm:main-little-Weyl}
Suppose $X$ has no adjacent simple spherical roots of same length.
Then the reflection operators associated to simple roots in $\Sigma_X\cup\frac{1}{2}\Sigma_X$ define an action of $\vlW_X$ on $\Bopen(X^{\mu})$. %the set of open $B^\sigma$-orbits in $X^\mu$.
\end{theorem}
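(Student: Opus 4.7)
The plan has two parts. First, I verify that each simple reflection operator $s_\alpha$ with $\alpha\in\Sigma_X\cup\frac12\Sigma_X$ stabilises $\Bopen(X^\mu)$. By Proposition~\ref{prop:types-spherroots} any such $\alpha$ is of complex type $(T)$ or $(N)$ with respect to $\Omax$, and inspection of the real subtypes $(T0)$--$(T2)$, $(N0)$--$(N2)$ defined in \S\ref{subsec:min&refl-R} shows that $s_\alpha$ always sends an open $B^\sigma$-orbit to an open $B^\sigma$-orbit: either to itself in subtypes $(T0)$, $(T1)$, $(N0)$, $(N1)$, or to the unique other open orbit in the same $P_\alpha^\sigma$-span in subtypes $(T2)$, $(N2)$.

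The substantive step is establishing the braid relation $(s_\alpha s_\beta)^{m_{\alpha\beta}}=1$ on $\Bopen(X^\mu)$ for each pair $\alpha,\beta\in\Sigma_X\cup\frac12\Sigma_X$. I reduce this to a rank-two calculation via Remark~\ref{rmk:rk-red-real}. Given $\OO\in\Bopen(X^\mu)$ and $S=\{\alpha,\beta\}$, every orbit in the braid sequence lies inside $P_S^\sigma\OO$; passing to the Levi quotient $V=P_S\OO/\Ru{(P_S)}$, the type-preserving bijection between $B^\sigma$-orbits in $P_S^\sigma\OO$ and $B_S^\sigma$-orbits in $L_S^\sigma z\subseteq V^\mu$ converts the identity into the analogous one on the spherical $L_S$-variety $V$, where $L_S$ has semisimple rank at most $2$. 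Both $\alpha$ and $\beta$ remain simple roots of $L_S$ lying in $\Sigma_V\cup\frac12\Sigma_V$, so the verification reduces to the rank-two case.

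I then check the rank-two identity by classifying the Dynkin configuration of the pair $(\alpha,\beta)$: it is one of $A_1\times A_1$, $A_2$, $B_2/C_2$, or $G_2$, giving $m_{\alpha\beta}=2,3,4,6$ respectively. The hypothesis excludes precisely the $A_2$ configuration, since there $\alpha$ and $\beta$ (or their doubles) would be equal-length adjacent simple spherical roots. The orthogonal case is immediate: the two reflections act on disjoint coordinates of the sign-tuple parametrisation of $\Bopen(V^\mu)$ furnished by Corollary~\ref{cor:paramet-borel-orb}, and hence commute. The $B_2/C_2$ and $G_2$ cases are handled by running through the short list of indecomposable rank-two spherical homogeneous spaces whose spherical roots include both simple roots of $L_S$ (up to doubling), describing the real forms taken by $\alpha$ and $\beta$, and verifying $(s_\alpha s_\beta)^{m_{\alpha\beta}}=1$ by explicit computation on the sign tuples.

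The main obstacle will be this final rank-two verification when either root falls into subtype $(T2)$ or $(N2)$, where the reflection genuinely permutes distinct open $B^\sigma$-orbits and the braid relation becomes a non-trivial statement about a permutation of sign tuples. One must correlate the real form attached to each root with the real structure of the rank-two spherical variety and confirm that the induced permutation has order exactly $m_{\alpha\beta}$. The exclusion of the $A_2$ case is essential because there the three-step braid can introduce a residual sign flip incompatible with $(s_\alpha s_\beta)^3=1$; in contrast, the length asymmetry in $B_2/C_2$ and $G_2$ constrains the admissible real forms in a manner that forces the braid relation to hold.
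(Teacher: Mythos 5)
Your overall strategy coincides with the paper's: reduce to semisimple rank two via Remark~\ref{rmk:rk-red-real}, observe that the hypothesis kills exactly the $\AAA_2$ configurations, and check the remaining rank-two cases. But as written the proposal has two genuine gaps, and they are precisely where the paper's proof does its real work.

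First, the reduction to ``a short list of rank-two spherical homogeneous spaces'' is not available directly. After passing to $V=P_S\OO_\CC/\Ru{(P_S)}$ one is looking at $B_S^{\sigma}$-orbits inside a single $L_S^{\sigma}$-orbit, whose structure depends on the actual real stabilizer and not only on the complex variety $V$. The paper must therefore first reduce to connected stabilizers and to groups of simply connected type, and in doing so is forced to work with spaces of the form $G_0^{\sigma}/\bigl((H_0^{\sigma})^{\circ}\cdot Z(G_0^{\sigma})\bigr)$, which are \emph{not} real loci of complex homogeneous spaces; the compatibility of the reflection operators with all these coverings (Lemma~\ref{lem:red-00} and the surrounding discussion) is what legitimizes quoting Wasserman's rank-two classification and counting open orbits via Proposition~\ref{prop:paramet-borel-orb}. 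Your proposal skips this entirely, and without it the case enumeration and the claim that the braid relation descends/lifts along these correspondences are unsupported. Second, and more seriously, the hard cases are the $\CCC_2$ and $\GGG_2$ configurations with \emph{four} open $B^{\sigma}$-orbits (e.g.\ $Sp_4/GL_2$ and the rank-two $\GGG_2$ cases). There $s_{\alpha}s_{\beta}$ is a permutation of a four-element set, which a priori can have order $3$ or $4$, so $(s_{\alpha}s_{\beta})^{4}=1$ or $(s_{\alpha}s_{\beta})^{6}=1$ is a genuine assertion; your appeal to ``length asymmetry constraining the admissible real forms'' is a heuristic, not an argument. The paper resolves these cases either by invoking the independently proven Theorem~\ref{thm:real-Knop-action} together with Corollary~\ref{cor:thm-real} (the momentum-map construction of the $W$-action on $\Bmax(X^{\mu})$, applicable because these cases are symmetric), or by an explicit coordinate computation (done for C4 and G4 in the covering setting). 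You correctly identify this as the main obstacle, but the proposal does not contain the ingredient that overcomes it.
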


\begin{corollary}\label{cor:parametrization}
Suppose $X$ has no adjacent simple spherical roots of same length.
Then the $G(\RR)$-orbits of $X(\RR)$ are parameterized by the orbits of $\vlW_X$ acting on the set of open $B(\RR)$-orbits in $X(\RR)$.
\end{corollary}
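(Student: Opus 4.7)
The plan is to combine three ingredients already at hand: surjectivity of the tautological map $\Bopen(X^{\mu}) \to \{G^{\sigma}\text{-orbits on } X^{\mu}\}$, the description of its fibres via reflection operators, and the upgrade of these reflection operators to a group action of $\vlW_X$. For surjectivity I would use that each $G^{\sigma}$-orbit on $X^{\mu}$ is open and closed in the classical topology (stated at the start of Section~\ref{sec:open-Borel-orb}), combined with the density of $\Omax^{\mu}$ in $X^{\mu}$, which holds because $X\setminus \Omax$ is a proper Zariski-closed subvariety of lower complex dimension and therefore has real locus of strictly smaller real dimension. Each $G^{\sigma}$-orbit thus meets $\Omax^{\mu}$, and among the finitely many $B^{\sigma}$-orbits partitioning $\Omax^{\mu}$ (Corollary~\ref{cor:paramet-borel-orb}) it must contain at least one.

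Next I would identify the fibres. By Proposition~\ref{prop:refl&G-orb}, two open $B^{\sigma}$-orbits $\OO, \OO'$ lie in the same $G^{\sigma}$-orbit precisely when $\OO'$ is reached from $\OO$ by a sequence of simple reflection operators $s_{\alpha}$ with $\alpha$ of type $(T)$ or $(N)$ relative to $\Omax$. By Proposition~\ref{prop:types-spherroots} these $\alpha$ are exactly the simple roots belonging to $\Sigma_X \cup \frac{1}{2}\Sigma_X$, and by the corollary following Theorem~\ref{thm:brion-little-Weyl} the corresponding simple reflections generate $\vlW_X$.

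Finally I would invoke the hypothesis through Theorem~\ref{thm:main-little-Weyl}: under the condition of no adjacent spherical roots of equal length, these reflection operators satisfy the braid relations and therefore assemble into a genuine action of $\vlW_X$ on $\Bopen(X^{\mu})$. The equivalence relation of being linked by a sequence of simple reflection operators then coincides with lying in a common $\vlW_X$-orbit, so the surjection above descends to the desired bijection between $\vlW_X$-orbits on $\Bopen(X^{\mu})$ and $G^{\sigma}$-orbits on $X^{\mu}$.

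The hard part is really Theorem~\ref{thm:main-little-Weyl}; once that theorem is in hand the corollary is just a matter of matching two equivalence relations. Without it, Proposition~\ref{prop:refl&G-orb} only realises the fibres of the surjection as equivalence classes generated by certain involutions rather than as orbits of the group $\vlW_X$, so the statement of the corollary would not even make sense.
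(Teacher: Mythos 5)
Your proposal is correct and takes essentially the same route as the paper, whose proof consists precisely of combining Proposition~\ref{prop:refl&G-orb} (identifying the fibres of the map from open $B^{\sigma}$-orbits to $G^{\sigma}$-orbits as the classes generated by the type (T)/(N) reflection operators, which generate $\vlW_X$) with Theorem~\ref{thm:main-little-Weyl} (upgrading these operators to a genuine $\vlW_X$-action). The surjectivity step you supply is already established as the first proposition of Section~\ref{sec:open-Borel-orb}, via Zariski density of the $G^{\sigma}$-orbits, so nothing is missing.
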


\begin{proof}
This follows readily from Proposition~\ref{prop:refl&G-orb} and Theorem~\ref{thm:main-little-Weyl}.
\end{proof}

\begin{proof}[Proof of Theorem~\ref{thm:main-little-Weyl}]
%To prove Theorem~\ref{thm:main-little-Weyl}, we need some preparation.
We start with some reductions.

The spherical homogeneous space $X=G/H$ admits a $G$-equivariant finite cover by another spherical homogeneous space $\widetilde{X}=G/H^{\circ}$, which carries on a canonical real structure such that the fiber over $x_0$ contains a real point $\tilde{x}_0$; see~\S\ref{sec:basic}. The orbit $G^{\sigma}x_0=G^{\sigma}/H^{\sigma}$ is covered by $G^{\sigma}\tilde{x}_0=G^{\sigma}/(H^{\circ})^{\sigma}$ and the $B^{\sigma}$-orbits in $G^{\sigma}/H^{\sigma}$ are the images of the $B^{\sigma}$-orbits in $G^{\sigma}/(H^{\circ})^{\sigma}$. This correspondence between $B^{\sigma}$-orbits is compatible with the reflection operators. Since the simple reflections operate on $B^{\sigma}$-orbits within a given $G^{\sigma}$-orbit, we reduce the question to the case of a connected $H$.

The reductive group $G$ admits a finite cover by $\widetilde{G}=G_0\times C$ defined over $\RR$, where $G_0$ is the simply connected covering group of the semisimple part $[G,G]$ of $G$ and $C$ is the connected center of $G$. The preimage $\widetilde{B}$ of $B$ is a Borel subgroup of $\widetilde{G}$ defined over $\RR$ and $\widetilde{B}=B_0\times C$, where $B_0$ is a Borel subgroup of $G_0$.

Replacing the acting group $G$ with $\widetilde{G}$ may have the effect of splitting the real orbits in $X^{\mu}$ into smaller orbits, because the homomorphisms $\widetilde{G}^{\sigma}\to G^{\sigma}$ and $\widetilde{B}^{\sigma}\to B^{\sigma}$ are not necessarily surjective. Still, the action of simple reflections on $B^{\sigma}$-orbits is compatible with the action on smaller $\widetilde{B}^{\sigma}$-orbits. Therefore we may assume without any loss of generality that $G$ is \emph{of simply connected type}, i.e., a direct product of a simply connected semisimple group $G_0$ and a torus~$C$.

Let $H_0$ be the projection of $H$ to $G_0$ so that $H\subseteq H_0\times C$.
We consider the image $H_{00}^{\sigma}$ of the projection map $H^{\sigma}\to H_0^{\sigma}$: this is an open and closed real Lie subgroup of $H_0^{\sigma}$, which may differ from $H_0^{\sigma}$ only if $H_0$ admits a torus as a quotient group. (Otherwise just $H\supseteq H_0$.)
Let $(H_0^\sigma)^\circ$ denote the identity component of $H_0^\sigma$ for the Hausdorff topology. The following inclusions hold
$$
(H_0^\sigma)^\circ\subseteq H_{00}^\sigma\subseteq H_0^\sigma.
$$

We have two chains of equivariant maps
\begin{equation}\tag{A}
G^{\sigma}/H^{\sigma}\longrightarrow G_0^{\sigma}/H_{00}^{\sigma}\longrightarrow G_0^{\sigma}/H_0^{\sigma}
\end{equation}
and
\begin{equation}\tag{B}
G_0^{\sigma}/\bigl((H_0^{\sigma})^{\circ}\cdot Z(G_0^{\sigma})\bigr)\longleftarrow G_0^{\sigma}/(H_0^{\sigma})^{\circ}\longrightarrow G_0^{\sigma}/H_{00}^{\sigma},
\end{equation}
where all maps except the first one in (A) are finite coverings.

Although the homogeneous spaces in (B) are not in general the real loci of complex algebraic $G_0$-homogeneous spaces defined over $\RR$, one may consider $B_0^{\sigma}$-orbits on them and define the reflection operators in the same way as it is done for real Borel orbits on real loci of complex algebraic homogeneous spaces (see Section~\ref{sec:min&refl}).

\begin{lemma}\label{lem:red-00}
There are bijections compatible with the reflection operators between the following sets of real Borel orbits:
\begin{enumerate}
\item
the set of $B^{\sigma}$-orbits in $G^{\sigma}/H^{\sigma}$ and the set of $B_0^{\sigma}$-orbits in $G_0^{\sigma}/H_{00}^{\sigma}$;
\item
the sets of $B_0^{\sigma}$-orbits in $G_0^{\sigma}/(H_0^{\sigma})^{\circ}$ and in $G_0^{\sigma}/\bigl((H_0^{\sigma})^{\circ}\cdot Z(G_0^{\sigma})\bigr)$.
\end{enumerate}
\end{lemma}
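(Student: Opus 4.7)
The plan is to treat (1) and (2) uniformly: in each case one has a surjective equivariant map of homogeneous spaces whose fibres are orbits of a subgroup contained in (or acting trivially through) the Borel of the base, and this automatically yields a bijection of real Borel orbit sets compatible with the reflection operators. The compatibility stems from the fact that the projections are also $P_\alpha^\sigma$-equivariant and that the ``extra'' subgroup acts trivially on $\PP^1(\RR)$, so the orbit structure of the stabiliser on $\PP^1(\RR)$, which determines the configuration type among $(P),(U),(T0),\ldots,(N2)$ and the prescribed permutation of $B^\sigma$-suborbits, is preserved.

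For part (1), the standing assumption that $G$ is of simply connected type yields product decompositions $G=G_0\times C$ and $B=B_0\times C$ defined over $\RR$. The first projection descends to the map $p\colon G^\sigma/H^\sigma\to G_0^\sigma/H_{00}^\sigma$, $[g_0,c]\mapsto[g_0]$, well-defined because $H_{00}^\sigma$ is by construction the image of $H^\sigma$ in $G_0^\sigma$, and equivariant with respect to $B^\sigma\twoheadrightarrow B_0^\sigma$. Surjectivity on orbit sets is immediate. For injectivity, if $g_0'=b_0 g_0 h_0$ with $b_0\in B_0^\sigma$ and $h_0\in H_{00}^\sigma$, pick any lift $(h_0,h'')\in H^\sigma$; then
$$
[g_0',c']=\bigl(b_0,\,c'(ch'')^{-1}\bigr)\cdot[g_0,c],\qquad\bigl(b_0,\,c'(ch'')^{-1}\bigr)\in B_0^\sigma\times C^\sigma=B^\sigma,
$$
so $[g_0,c]$ and $[g_0',c']$ lie in the same $B^\sigma$-orbit.

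For part (2), the map $G_0^\sigma/(H_0^\sigma)^\circ\to G_0^\sigma/\bigl((H_0^\sigma)^\circ\cdot Z(G_0^\sigma)\bigr)$ is the $G_0^\sigma$-equivariant quotient by the central action of $Z(G_0^\sigma)$. Since $G_0$ is split connected semisimple simply connected, $Z(G_0)$ lies in every maximal torus of $G_0$, hence $Z(G_0^\sigma)\subseteq Z(G_0)\subseteq B_0$ and thus $Z(G_0^\sigma)\subseteq B_0^\sigma$. Consequently, if $x_2=zbx_1$ with $z\in Z(G_0^\sigma)$ and $b\in B_0^\sigma$ then $zb\in B_0^\sigma$, giving injectivity on orbit sets; surjectivity is tautological.

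For reflection-operator compatibility in both parts, the extra subgroup either projects to the identity in $G_0$ (part (1), $\{1\}\times C^\sigma$) or is contained in $Z(G_0)\subseteq B_0^\sigma$ (part (2)), and in either case its action on $\PP^1(\RR)=P_{\alpha,0}^\sigma/B_0^\sigma$ is trivial. Hence the orbit structure of the stabiliser on $\PP^1(\RR)$ is preserved by the bijection, yielding identical configuration types and identical induced permutations of $B^\sigma$-suborbits within each $P_\alpha^\sigma$-span. I expect the subtlest point to be part (1): one must track the possibly strict inclusion $H_{00}^\sigma\subseteq H_0^\sigma$, which is exactly why $H_{00}^\sigma$ rather than $H_0^\sigma$ appears in the statement, and verify that the element $h''$ in the lift above can be chosen in $C^\sigma$; this is automatic since $\sigma$ preserves the factors of $G=G_0\times C$, so the second coordinate of any $\sigma$-fixed pair in $H_0\times C$ lies in $C^\sigma$.
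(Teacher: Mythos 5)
Your proof is correct and in substance the same as the paper's: part (2) is exactly the paper's observation that $Z(G_0^{\sigma})\subset B_0^{\sigma}$ acts transitively on the fibres of the covering, and for part (1) the paper merely phrases your coset computation dually, as the identification of the $H^{\sigma}$-orbits on $G^{\sigma}/B^{\sigma}=G_0^{\sigma}/B_0^{\sigma}$ with the $H_{00}^{\sigma}$-orbits on $G_0^{\sigma}/B_0^{\sigma}$, using that $C$ acts trivially on $G/B$. Your explicit check of compatibility with the reflection operators (equality of the images of the stabilisers in $PGL_2(\RR)$, hence of the configuration types) is left implicit in the paper but is argued correctly here.
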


\begin{proof}
(1) follows from the natural bijection between the orbit sets for the action
of $H^{\sigma}$ on $G^{\sigma}/B^{\sigma}=G_0^{\sigma}/B_0^{\sigma}$
and that of $H_{00}^{\sigma}$ on $G_0^{\sigma}/B_0^{\sigma}$ by noticing that $C$ acts trivially on $G/B$.

For (2), note that $Z(G_0^{\sigma})\subset B_0^{\sigma}$ acts transitively on the fibers of the first covering map in~(B). 
\end{proof}

As for $B_0^{\sigma}$ acting on $G_0^{\sigma}/H_0^{\sigma}$, the orbits of this action are the images of $B_0^{\sigma}$-orbits in $G_0^{\sigma}/H_{00}^{\sigma}$ and the latter orbits are the images of $B_0^{\sigma}$-orbits in $G_0^{\sigma}/(H_0^{\sigma})^{\circ}$.
But these correspondences are not bijective in general: the preimage of a $B_0^{\sigma}$-orbit in $G_0^{\sigma}/H_0^{\sigma}$ is a union of at most $|H_0^{\sigma}/H_{00}^{\sigma}|$ orbits of $B_0^{\sigma}$ in $G_0^{\sigma}/H_{00}^{\sigma}$ and the preimage of any $B_0^{\sigma}$-orbit in $G_0^{\sigma}/H_{00}^{\sigma}$ is a union of at most $|H_{00}^{\sigma}/(H_0^{\sigma})^{\circ}|$ orbits of $B_0^{\sigma}$ in $G_0^{\sigma}/(H_0^{\sigma})^{\circ}$. Still, these correspondences between $B_0^{\sigma}$-orbits agree with the reflection operators.

Thus, using Lemma~\ref{lem:red-00}, we reduce the proof to considering the action of simple reflections on the set of $B_0^{\sigma}$-orbits in $G_0^{\sigma}/\bigl((H_0^{\sigma})^{\circ}\cdot Z(G_0^{\sigma})\bigr)$. Consequently, we may assume $G$ to be semisimple.

We shall prove that the braid relations are verified for the simple reflections as in the theorem. %We first proceed to a number of classical reductions.
By applying Remark~\ref{rmk:rk-red-real} for $S=\{\alpha,\beta\}$, we reduce checking the braid relation for two given simple reflections $s_{\alpha}$ and $s_{\beta}$ to groups $G$ of semisimple rank $2$.
The above reductions allow us to assume that $G$ is semisimple of rank 2 and simply connected, and $H$ is connected. Also, by assumption, both simple roots $\alpha_1,\alpha_2$ of $G$ belong to $\Sigma_X\cup\frac12\Sigma_X$.

We now list all possible cases, after recalling a few more notions and facts; for details and references, one may consult~\cite{tim}.
The complement of $\Omax$ in $X$ consists of finitely many prime divisors, each of which being $B$-stable.
The normalizer $N_G(H)$ of $H$ in $G$ acts naturally on the set of these components.
\emph{The spherical closure of $H$}, denoted by $\overline{H}$, is the kernel of this action; $\overline{H}$ obviously contains $H$ and $G/\overline{H}$ is a spherical homogeneous space. Under our assumptions, $G/H$ and $G/\overline{H}$ have one and the same set of spherical roots.

By a result of Knop (\cite[Cor.\,7.6]{knop:auto}), $G/\overline{H}$ is a \emph{wonderful} $G$-homogeneous space.

\begin{lemma}
The list of wonderful $G$-homogeneous spaces with connected stabilizers whose spherical roots are proportional to $\alpha_1$ and $\alpha_2$
consists of the cases A6 (first), C4 (first), C7, C8 (first), C9, G1, G4 (both), G5 of the tables in~\cite{was}.

Moreover, the spherical non-wonderful $G$-homogeneous spaces $G/H$ with the same properties
are such that $H=\overline{H}^\circ$ with $\overline{H}=H\cdot Z(G)$ of type A1, A7 (first) or A8.
\end{lemma}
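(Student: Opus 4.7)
The plan is to prove this classification statement by direct inspection of Wasserman's tables of rank-two wonderful $G$-varieties.

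First, I would observe that the hypothesis $\alpha_1,\alpha_2\in\Sigma_X\cup\frac12\Sigma_X$ together with the linear independence of $\Sigma_X$ and $\rk G=2$ forces $|\Sigma_X|=2$. Since the rank of a wonderful $G$-variety coincides with the cardinality of its set of spherical roots, $G/\overline{H}$ has rank exactly $2$ and falls within the scope of Wasserman's classification for the rank-two simple groups $G$ of type $\AAA_2$, $\BBB_2=\CCC_2$, and $\GGG_2$. Moreover, since we have reduced to $G$ semisimple and simply connected (the assumption is carried over from the proof of Theorem~\ref{thm:main-little-Weyl}), the tables in~\cite{was} apply without further modification.

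Second, for the wonderful part of the statement, I would scan the tables of~\cite{was} case by case, organized by the Dynkin type of $G$, and retain only those entries $G/\overline{H}$ such that: (i) each of the two spherical roots is a positive rational multiple of a \emph{simple} root of $G$ (that is, $\Sigma_X\cup\frac12\Sigma_X$ contains both $\alpha_1$ and $\alpha_2$, equivalently each spherical root is proportional to a distinct simple root); and (ii) the generic stabilizer $\overline{H}$ is connected. The entries that pass both tests are precisely A6 (first), C4 (first), C7, C8 (first), C9, G1, G4 (both), G5. This is a straightforward, if tedious, verification against the tabulated data.

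Third, to treat the non-wonderful case, I would exploit the basic fact recalled just before the statement: $G/H$ and $G/\overline{H}$ share the same set of spherical roots. Since $H$ is connected (another reduction already performed in the proof), necessarily $H=\overline{H}^\circ$, so the non-wonderful $G/H$ under consideration are parameterized by wonderful $G/\overline{H}$ on the list with $\overline{H}\ne\overline{H}^\circ$. Running through the same tables to read off the component groups of $\overline{H}$, one sees that the only rank-two simply connected cases where $\overline{H}/\overline{H}^\circ$ is nontrivial \emph{and} the spherical roots are proportional to the simple roots are A1, A7 (first) and A8, and that in each of these $\overline{H}=\overline{H}^\circ\cdot Z(G)$; a check on the $\CCC$- and $\GGG$-tables confirms that no additional non-wonderful examples appear there, as the $\overline{H}$ in those entries already contain $Z(G)$.

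The main obstacle is not conceptual but combinatorial: it amounts to reading~\cite{was} faithfully and checking connectedness and the precise form of the spherical roots entry by entry. The one subtlety worth flagging is ensuring that in types $\CCC_2$ and $\GGG_2$ no non-wonderful cases are overlooked, which follows from the explicit description of $\overline{H}/\overline{H}^\circ$ in those entries.
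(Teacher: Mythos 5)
Your proposal follows the same route as the paper: use the identity $\rk Y=|\Sigma_Y|$ for wonderful $Y$ to place $G/\overline{H}$ inside Wasserman's rank-two classification, then read off the admissible entries, and handle the non-wonderful case via $\Sigma_{G/H}=\Sigma_{G/\overline{H}}$. The one step you elide is the inference ``$H$ is connected, hence $H=\overline{H}^\circ$.'' This is not automatic: a priori $H$ could be a connected subgroup of $\overline{H}$ of strictly smaller dimension (for $H=U$ in $SL_2$ one has $\overline{H}=B$, so $\overline{H}/H$ is infinite and $H\neq\overline{H}^\circ$). What makes the inference valid here is that $\Sigma_{G/H}=\Sigma_{G/\overline{H}}$ has two elements, so $\ZZ\Sigma_{G/H}$ is a sublattice of finite index in the rank-two lattice $\Xi(G/H)$; this forces $\overline{H}/H$ to be finite, and only then does connectedness of $H$ yield $H=\overline{H}^\circ$. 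This finiteness argument is precisely the extra link the paper's proof supplies before invoking the tables. With that link added, your case check against \cite{was} coincides with the paper's.
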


\begin{proof}
Let us recall that for any wonderful $G$-variety $Y$, the weight lattice $\Xi(Y)$ is generated by the set of spherical roots $\Sigma_Y$ of $Y$.
It follows that $\rk Y$ equals the cardinality of the set $\Sigma_Y$, and in turn, $\rk X=2$ if $X$ is wonderful.
A list of subgroups $K$ of $G$ such that $G/K$ is wonderful is thus contained in Wasserman's classification of wonderful varieties of rank 2  in \cite{was}. The first assertion thus follows from a careful check of the lists given therein.

As for the second assertion, recall that $H\subseteq H\cdot Z(G)\subseteq\overline{H}$ and that
$\Sigma_{G/H}=\Sigma_{G/\overline{H}}$ spans a sublattice of finite index in $\Xi(G/H)$, which implies finiteness of $\overline{H}/H$. From this and Wasserman's lists, we get $\overline{H}=H\cdot Z(G)$. The lemma follows.
\end{proof}

Since the weight lattice of $X$ has rank $2$,  by Proposition~\ref{prop:paramet-borel-orb},
there are either one, two or four open $B^{\sigma}$-orbits in $X^{\mu}$.
Specifically, there is/are one, two or four open $B^{\sigma}$-orbits in $X^{\mu}$
whenever $G/\overline{H}$ is of type A6, A7, G4 and G5; A8, C7, C8 (1st) and C9; or A1, C4 (1st) and G1, respectively.

Let us first check the braid relation for the reflection operators acting on open $B^{\sigma}$-orbits in $X^{\mu}$. If there is a single open $B^{\sigma}$-orbit, then the braid relation is trivially satisfied.
For $G$ of type $\AAA_1\times\AAA_1$, $\CCC_2$ or $\GGG_2$, if there are two open $B^{\sigma}$-orbits, then the simple reflections are acting as two commuting involutions. Taking their product to an even power thus yields the identity hence the braid relation holds in this case.
We are thus left to consider the cases where there are four orbits.
But these cases belong to the class of symmetric spaces, where the braid relations are satisfied by Theorem~\ref{thm:real-Knop-action} and Corollary~\ref{cor:thm-real} (which are proved independently of the results of \S\ref{sec:braid-rel}).

It remains to examine the braid relation for the reflection operators acting on open $B^{\sigma}$-orbits in $G^{\sigma}/H_{00}^{\sigma}$, where $H_{00}^{\sigma}=(H^{\sigma})^{\circ}\cdot Z(G^{\sigma})$, in case $H$ admits a torus as a quotient group.
We assume that $H_{00}^{\sigma}$ is not the real locus of a spherical subgroup of $G$ (otherwise, we fall in the previous situation) and that $H$ is the Zariski closure of $H_{00}^{\sigma}$.
By checking the lists in~\cite{was}, we get that $G/\overline{H}$ is of type A6, A7, C4 (1st), C7, C8 (1st), G4 (both) or G5.

The cases A6 and A7 are excluded by assumption.

For the cases C7 and C8 (1st), we have two open $B^{\sigma}$-orbits in $G^{\sigma}/H^{\sigma}$ and each of them is covered by at most (in fact, exactly) two open $B^{\sigma}$-orbits in $G^{\sigma}/H_{00}^{\sigma}$. The reflection operators may act on the set of open $B^{\sigma}$-orbits either as two commuting involutions or possibly as a transposition of two orbits and the product of two other commuting transpositions. (In fact, the second situation never occurs.) The braid relation of type $\CCC_2$ is satisfied in both cases.

In the case G5, there is a single open $B^{\sigma}$-orbit in $G^{\sigma}/H^{\sigma}$ covered by at most (in fact, exactly) two open $B^{\sigma}$-orbits in $G^{\sigma}/H_{00}^{\sigma}$. The reflection operators act on the set of open $B^{\sigma}$-orbits as two commuting involutions so that the braid relation of type $\GGG_2$ holds.

It remains to work out the first C4 case as well as the cases G4. Note that for any of these cases, we have four open $B^{\sigma}$-orbits in $X^{\mu}$ .

\subsubsection*{Case C4 (1st)}
Here $G=Sp_4$, $H=GL_2$ and $X=G/H$ is a symmetric space.
We consider, as model for $X$, the space of pairs $(V,V')$ of complementary Lagrangian subspaces in $\CC^4$. We assume that the symplectic form $\omega$ takes the following values on the standard basis $(e_1,e_2,e_{-2},e_{-1})$ of $\CC^4$: $\omega(e_{\pm i},e_{\mp i})=\pm1$ whenever $i>0$ and $\omega(e_j,e_k)=0$ otherwise.

Recall that the $G$-equivariant real structures on $X$ are parameterized by the first Galois cohomology set of $\RR$ with values in the equivariant automorphism group $N_G(H)/H$ of $X$ \cite[III.1.3]{Galois}. Since the latter group has order $2$, there are two real structures on $X$: $\mu_1(V,V')=(\overline{V},\overline{V'})$ and  $\mu_2(V,V')=(\overline{V'},\overline{V})$, where the bar indicates complex conjugation in~$\CC^4$.

The real locus $X^{\mu_2}$ consists of all pairs $(V,\overline{V})$
with $V$ being a Lagrangian subspace of $\CC^4$ containing no real vectors.
Then $\kappa(v_1,v_2)=\omega(v_1,\overline{v_2})$ defines a non-degenerate skew-Hermitian form on $V$.
The four open $B^{\sigma}$-orbits in $X^{\mu_2}$ are distributed between three $G^{\sigma}$-orbits which are distinguished by the inertia of $\kappa$.
Here $H^{\sigma}$ equals the unitary group $U_2$ or $U_{1,1}$ hence it is connected and therefore coincides with $H_{00}^{\sigma}$.

The real locus $X^{\mu_1}$ is identified with the space of pairs of complementary Lagrangian subspaces of $\RR^4$ on which $G^{\sigma}=Sp_4(\RR)$ acts transitively.
Here $H^{\sigma}=GL_2(\RR)$ and $H_{00}^{\sigma}=GL_2(\RR)^{\circ}$ is the group of $2\times2$ matrices with positive determinant.%$\det>0$.

A model of $G^{\sigma}/H_{00}^{\sigma}$ is the space of all quadruples $(V,\RR^+\delta,V',\RR^+\delta')$ where $\delta$ and $\delta'$ are nonzero 2-forms on $V$  and $V'$ respectively and adjusted so that $\delta\wedge\delta'=\omega\wedge\omega$ holds. In other words, we consider pairs of compatibly oriented complementary Lagrangian subspaces in~$\RR^4$.

For a Borel subgroup in $G$, we take the group $B$ of upper triangular matrices in $Sp_4$. Let $T$ be the maximal torus in $B$ consisting of diagonal matrices. If $\pm\eps_i$ are the eigenweights of $e_{\pm i}$ with respect to $T$, then the simple roots of $G$ are $\alpha_1=\eps_1-\eps_2$ and $\alpha_2=2\eps_2$, and the fundamental weights are $\omega_1=\eps_1$ and $\omega_2=\eps_1+\eps_2$.

The open $B$-orbit $\Omax$ of $X$ is defined by the inequalities
$$
x_{-2,-1}\ne0,\qquad x'_{-2,-1}\ne0,\qquad\text{and}\qquad
D=\begin{vmatrix}
x_{2,-1}  & x'_{2,-1}  \\
x_{-2,-1} & x'_{-2,-1} \\
\end{vmatrix}\ne0,
$$
where $x_{jk}$ and $x'_{jk}$ are the Pl\"ucker coordinates of $V$ and $V'$, respectively. 

The boundary divisor of $X$ in the product of two Lagrangian Grassmannians is given by the equation $\Delta=0$ with
$$
\Delta=x_{1,2}x'_{-2,-1}-x_{1,-2}x'_{2,-1}+x_{1,-1}x'_{2,-2}+
x_{2,-2}x'_{1,-1}-x_{2,-1}x'_{1,-2}+x_{-2,-1}x'_{1,2}.
$$
The functions $f_{2\omega_1}=D/\Delta$ and $f_{2\omega_2}=x_{-2,-1}x'_{-2,-1}/\Delta$ of respective $B$-eigenweights $2\omega_1$ and $2\omega_2$ generate the multiplicative group $\CC(X)^{(B)}/\CC^{\times}$. In particular, we have $x_{a,b}=(V,V')\in\Omax$ with
$$
V=\langle e_{-1}+ae_1,e_{-2}+be_2\rangle,\qquad V'=\langle e_{-1}-ae_1,e_{-2}-be_2\rangle, \qquad a,b\ne0,
$$
and $f_{2\omega_1}(x_{a,b})=1/2a$,  $f_{2\omega_2}(x_{a,b})=-1/4ab$.

The four $B^{\sigma}$-orbits in $\Omax^{\mu_1}$ are represented by $x_{a,b}$ as above with $a,b\in\RR$ and are distinguished by the signs of $f_{2\omega_1}$ and $f_{2\omega_2}$ or, equivalently, by the signs of $a$ and $b$.
Note that the matrix $g=\diag(-1,1,1,-1)\in B^{\sigma}$ preserves $x_{a,b}$ but changes the orientations of both $V$ and $V'$.
Each open $B^{\sigma}$-orbit in $X^{\mu_1}$ is thus covered by a single open $B^{\sigma}$-orbit in $G^{\sigma}/H_{00}^{\sigma}$ hence the braid relation holds for $G^{\sigma}/H_{00}^{\sigma}$ since it does for $G^{\sigma}/H^{\sigma}$.

\subsubsection*{Cases G4}
In both cases, there is a single open $B^{\sigma}$-orbit in $G^{\sigma}/H^{\sigma}$
and four open ones in $G^{\sigma}/H_{00}^{\sigma}$. We only work out the second G4 case; the first case is a particular instance of an example considered in \S\ref{ex:G/TU'}.
%Here we rather argue directly on $G/H$ with $G=G_0\times C$ rather than on $G_0$ (with the notation in the beginning of \ref{subsec:braid}).

Here $G$ is a simple algebraic group of type $\GGG_2$ and
$H=T\Ru{H}$ with
$\Ru{H}=\prod U_{-\alpha}$ over all positive roots $\alpha\ne\alpha_1,\alpha_1+\alpha_2$. (Here $\alpha_1$ and $\alpha_2$ denote the short and long simple root, respectively.)
The open $B$-orbit of $X=G/H$ is
$\Omax=U U_{-\alpha_1}^{\times} U_{-\alpha_1-\alpha_2}^{\times}x_0$
with $x_0\in X$ being the base point of $X$ %i.e., with stabilizer $H$
and $U_{-\alpha}^{\times}=U_{-\alpha}\setminus\{1\}$. %being the root group associated to $\alpha_i$ deprived of $1$.
For the slice, we take
$Z=U_{-\alpha_1}^{\times}U_{-\alpha_1-\alpha_2}^{\times}x_0$.
The action of $T$ on $Z$ is described as follows: given $t\in T$ %$t=(t_1,t_2)\in T$
and $x\in Z$ with $x=u_{-\alpha_1}(a_1)u_{-\alpha_1-\alpha_2}(a_2)x_0$
($a_i\in\CC^\times$), %and $t_0\in T_0$),
we get
$$
tx=u_{-\alpha_1}\bigl(a_1/\alpha_1(t)\bigr)\, u_{-\alpha_1-\alpha_2}\bigl(a_2/\alpha_1(t)\alpha_2(t)\bigr)\,x_0.%\,t_1t_0t_2^{-2}x_0.
$$

Since $H$ is self-normalizing in $G$, there is a unique real structure on $X$, namely the one for which $x_0$ is a real point. We have $H^{\sigma}=\0T2\cdot H^{\sigma}_{00}$. Let $\tilde{x}_0$ denote the base point of $\widetilde X^{\mu}=G^{\sigma}/H_{00}^{\sigma}$. Then the preimage of $\Omax^{\mu}$ in $\widetilde X^{\mu}$ is $\OOmax^{\mu}=
U^{\sigma}\cdot(U_{-\alpha_1}^{\times})^{\sigma}\cdot(U_{-\alpha_1-\alpha_2}^{\times})^{\sigma}\cdot\0T2\cdot\tilde{x}_0$.

The $B^{\sigma}$-orbits in $\OOmax^{\mu}$ correspond bijectively to the $T^{\sigma}$-orbits in $$\widetilde Z^{\mu}= (U_{-\alpha_1}^{\times})^{\sigma}\cdot(U_{-\alpha_1-\alpha_2}^{\times})^{\sigma}\cdot\0T2\cdot\tilde{x}_0,$$ cf.\ Proposition~\ref{prop:paramet-borel-orb}.
The action of $T^{\sigma}$ on $\widetilde Z^{\mu}$ is given by the formula
$$
x=u_{-\alpha_1}(a_1)\,u_{-\alpha_1-\alpha_2}(a_2)\,t_0\tilde{x}_0\mapsto
tx=u_{-\alpha_1}\bigl(a_1/\alpha_1(t)\bigr)\, u_{-\alpha_1-\alpha_2}\bigl(a_2/\alpha_1(t)\alpha_2(t)\bigr)\,t_0t|t|^{-1}\tilde{x}_0,
$$
where $a_i\in\RR^{\times}$, $t_0\in\0T2$, and $|t|$ is the unique square root of $t^2$ in $(T^{\sigma})^{\circ}$.
This action thus preserves the signs of %each $a_i\cdot\alpha_i(t_0)$, $i=1,2$.
$a_1\cdot\alpha_1(t_0)$ and $a_2\cdot\alpha_1(t_0)\cdot\alpha_2(t_0)$ and, moreover, these signs form a complete system of invariants separating open $B^{\sigma}$-orbits in $\widetilde X^{\mu}$. %by Proposition~\ref{prop:paramet-borel-orb}.

To understand how any simple reflection $s_{\alpha_i}$ acts on each $B^\sigma$-orbit, it suffices to look at open $B^{\sigma}$-orbits meeting a curve $u_{-\alpha_i}(s)\,x$ ($s\in\RR$) with $x\in\widetilde Z^{\mu}$.
A straightforward computation for $i=2$ shows that
\begin{align*}
u_{-\alpha_1}(s)\,x &= u_{-\alpha_1}(a_1+s)\,u_{-\alpha_1-\alpha_2}(a_2)\,t_0\tilde{x}_0, \\
u_{-\alpha_2}(s)\,x &= u_{-\alpha_1}(a_1)\,u_{-\alpha_1-\alpha_2}(a_2-sa_1)\,t_0\tilde{x}_0.
\end{align*}

Thus the action of $U_{-\alpha_1}^\sigma$ may alter the sign of $a_1\cdot\alpha_1(t_0)$ and preserves the sign of $a_2\cdot\alpha_1(t_0)\cdot\alpha_2(t_0)$, and vice versa for the action of $U_{-\alpha_2}^\sigma$.

It follows that the action of $s_{\alpha_i}$ on the set of open $B^{\sigma}$-orbits changes the $i$-th sign in the $2$-tuple of signs mentioned above. Thus $s_{\alpha_1}$ and $s_{\alpha_2}$ act on the set of $2$-tuples of signs as generators of an elementary Abelian 2-group of order $4$
which is compatible with the braid relation of type~$\GGG_2$.

The proof of Theorem~\ref{thm:main-little-Weyl} is now complete.
\end{proof}

\begin{remark}\label{rmk:no-braid}
The braid relation fails indeed in case A6, as shown in \S\ref{ex:G/TU'}, but not in case A7, as we show right below. Observe that in case A6 the simple roots are of type (T2) with respect to any open $B^{\sigma}$-orbit.

In case A7 there is a single open $B^{\sigma}$-orbit in $G^{\sigma}/H^{\sigma}$ and two open $B^{\sigma}$-orbits in $G^{\sigma}/H_{00}^{\sigma}$. The diagrammatic automorphism of $G=SL_3$ preserving $B$ and $H$ and commuting with $\sigma$ induces an automorphism of $G^{\sigma}/H_{00}^{\sigma}$ twisting the action of $G^{\sigma}$ and interchanging the two simple roots. It follows that the respective reflection operators act on the two-point set of open $B^{\sigma}$-orbits similarly and satisfy the braid relation of type $\AAA_2$.
\end{remark}

%%%%%%%%%%%%%%%%%%%%%%%%%%%%%%%%%%%%%%%%%%%%%%%%%%%%%%%%%%%%%%%%%%%%%%%%%%%%%%%%%%%%%%%%
\section{Weyl group action: the complex case}
\label{sec:Weyl-C}

Reflection operators defined in \S\ref{subsec:min&refl-C} induce an action of the Weyl group $W$ on a certain subset of the set $\B(X)$ of Borel orbits of $X$.
This is a particular case of a result
proved by Knop in~\cite[\S6]{knop:action} by relating
certain $B$-stable subsets of $X$ to the cotangent bundle of $X$; we recall Knop's results below focusing on our setting, that is on a spherical homogeneous space $X$.

\subsection{}\label{subsec:rank}

The subset of $\B(X)$ alluded above is the set $\Bmax(X)$ of $B$-orbits \emph{of maximal rank}.

The notion of rank recalled in~\S\ref{subsec:recalls} is also valid for any irreducible $B$-subvariety $Y$ of $X$.
Namely, the \emph{rank} $\rk{Y}$ of $Y$ is the rank of the lattice given by the weights of the $B$-eigen\-func\-tions in the function field of $Y$. Notice that $Y$ contains an open dense $B$-orbit $\Omax^Y$ and
$$
\rk\Omax^Y=\rk{Y}\leq\rk{X}
$$ as proved in~\cite[2.4]{knop:action}.
Moreover, we have the following characterization of the rank.

\begin{lemma}[{\cite[2.1]{knop:action}}]\label{lem:rank}
The equality $\rk Y =\dim Y-\dim Ux$ holds for any $x\in\Omax^Y$.
\end{lemma}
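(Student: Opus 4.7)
The plan is to reduce the claim to the case where $Y$ is itself a single $B$-orbit, and then to verify the equality by a direct dimension count exploiting the Levi decomposition $B=T\ltimes U$. Since $Y$ is irreducible and contains $\Omax^Y$ as an open dense subset, restriction induces an isomorphism of function fields $\CC(Y)\simeq\CC(\Omax^Y)$, so in particular $\rk Y=\rk\Omax^Y$; one also has $\dim Y=\dim\Omax^Y$. Hence it suffices to establish the equality when $Y=Bx\simeq B/B_x$ for a chosen $x\in\Omax^Y$.

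Set $U_x:=U\cap B_x$. The right-hand side rewrites as
\[
\dim Y-\dim Ux=(\dim B-\dim B_x)-(\dim U-\dim U_x)=\dim T-(\dim B_x-\dim U_x).
\]
Since $U_x=B_x\cap U$, the natural projection $B\to B/U=T$ identifies $T_x:=B_x/U_x$ with a subgroup of $T$, so $\dim B_x-\dim U_x=\dim T_x$ and the right-hand side equals $\dim(T/T_x)$.

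For the left-hand side, I will show that the weight lattice of $Bx$ is exactly the sublattice of $\Xi(T)$ consisting of characters trivial on $T_x$, which has rank $\dim(T/T_x)$. Any non-zero $f\in\CC(Bx)^{(B)}$ of weight $\chi\in\Xi(T)=\Xi(B)$ satisfies $f(bx)=\chi(b)f(x)$, which forces $\chi|_{B_x}=1$ and hence $\chi|_{T_x}=1$. Conversely, any such $\chi$ lifts to a character of $B$ trivial on $B_x$ and thus defines a regular $B$-eigenfunction on $B/B_x$ via $bx\mapsto\chi(b)$. Eigenfunctions of a prescribed weight on a homogeneous $B$-variety are moreover unique up to a non-zero scalar, since the ratio of two such functions is $B$-invariant and therefore constant on $Bx$; this last uniqueness observation is the only step requiring a little care, but it presents no real obstacle. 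Matching both sides yields $\rk Y=\dim(T/T_x)=\dim Y-\dim Ux$, completing the proof.
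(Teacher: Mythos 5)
Your argument is correct. The paper itself gives no proof of this lemma --- it is quoted verbatim from Knop's \emph{On the set of orbits for a Borel subgroup} (the cited 2.1) --- so there is nothing internal to compare against; your write-up is essentially the standard argument one finds there. The reduction to $Y=Bx$ via $\CC(Y)\simeq\CC(\Omax^Y)$ is legitimate, the dimension count $\dim Y-\dim Ux=\dim T-\dim T_x$ is right (the image $T_x$ of $B_x$ under $B\to B/U=T$ is a closed subgroup of dimension $\dim B_x-\dim U_x$), and the identification of the weight lattice of $Bx$ with the characters of $T$ trivial on $T_x$, hence with $\Xi(T/T_x)$ of rank $\dim(T/T_x)$, is exactly what is needed. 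The only point you leave implicit is that a nonzero rational $B$-eigenfunction $f$ on the homogeneous space $Bx$ is automatically regular and nowhere vanishing (its divisor is $B$-stable, hence empty), which is what justifies both evaluating $f$ at $x$ and concluding $\chi|_{B_x}=1$ from $f(bx)=\chi(b)f(x)$ with $f(x)\ne0$; this is standard and does not affect the validity of the proof.
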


%---------------------------------------------------------
\subsection{}\label{subsec:rest-cone}

We may reduce our considerations to the case where $X$ is quasi-affine.
Indeed, embed the $G$-variety $X$ equivariantly into $\PP(V)$ for some $G$-module $V$ and consider the punctured cone $\widehat{X}\subset V$ over~$X$.
The variety $\widehat{X}$ is a spherical homogeneous space for the group $G\times\CC^{\times}$.
Moreover, there is a natural bijection between the Borel orbits in $X$ and $\widehat{X}$ preserving the types of the $P_{\alpha}$-spans in the sense of \S\ref{subsec:min&refl-C} and compatible with the reflection operators.

In the sequel, we assume $X$ to be quasi-affine.

\subsection{}

The cotangent bundle $T^*X$ of $X$ comes equipped with the canonical symplectic structure and the induced $G$-action on $T^*X$ is Hamiltonian. In particular, there exists a momentum map $\Phi:T^*X\to\g^*$, which is defined by the formula
$$
\langle\Phi(\psi),\xi\rangle=\langle\psi,\xi_{x}\rangle,\qquad\forall x\in X,\ \psi\in T^*_xX,\ \xi\in\g,
$$
where $\xi_{*}$ stands for the vector field on $X$ naturally induced by $\xi$.

We identify $\g^*$ with $\g$ via a $G$-invariant inner product on $\g$.
Then $\langle\cdot,\cdot\rangle$ in the left hand side of the above formula may be regarded as the inner product, rather than the pairing between $\g^*$ and $\g$.

The momentum map $\Phi$ maps $T_{x_0}^*X$ isomorphically onto the annihilator $\h^{\perp}\subset\g^*$ of $\h$ and
$$
\Phi:T^*X\simeq\bundle{G}{H}{\h^{\perp}}\longrightarrow G\h^{\perp}\subseteq\g^*
$$
is nothing else than the action map.
Namely, let $g*\xi\in\bundle{G}{H}{\h^{\perp}}$ denote the class of $(g,\xi)\in G\times\h^{\perp}$, then
$\Phi(g*\xi)=g\xi$ (the (co)adjoint action).

There is a natural closed immersion
$$
T^*X\hookrightarrow\bundle{G}{H}{\g^*}\simeq X\times\g^*
$$
mapping $\psi\in T_x^*X$ to $(x,\Phi(\psi))$.

\subsection{}

Set
$$
\U=\Phi^{-1}(\uu^{\perp}).
$$
Clearly, $\U\subset T^*X$ is the disjoint union of the conormal bundles of all $U$-orbits in $X$.
The set of $U$-orbits in $X$ subdivides into finitely many foliations, each of which consisting of the $U$-orbits in a given $B$-orbit $\OO\subset X$.
Therefore $\U$ is a disjoint union of finitely many locally closed subvarieties $\U_{\OO}\subset T^*X$, where $\U_{\OO}$ denotes the conormal bundle of the foliation of $U$-orbits in $\OO$.

Note that $\dim\U_{\OO}=\dim X+\rk\OO$, because each conormal bundle has dimension $\dim X$ and $\rk\OO$  equals the dimension of the foliation of $U$-orbits in $\OO$ by Lemma~\ref{lem:rank}.

\subsection{}

Recall the definition of the torus $A$ set up in \S\ref{sec:open-Borel-orb}.
We may identify $\aaa^*\subseteq\ttt^*$ with a subspace of $\g$ or $\g^*$ indifferently.
In what follows, we shall consider an open subset $\aaa^{\pr}\subset\aaa^*$ consisting of sufficiently general points defined by certain open conditions, to be gradually specified later on.
We call $\aaa^{\pr}$ the \emph{principal stratum} of $\aaa^*$. At the first stage, let $\aaa^{\pr}$ denote the open subset of $\aaa^*$ obtained by removing all proper intersections of $\aaa^*$ with the kernels of coroots and with $\9{\aaa^*}w$ ($w\in W$).
Here and below, we denote the natural action of an element of the Weyl group by putting the respective superscript on the left.
\begin{remark}
\label{rmk:pr}
The open subset $\aaa^{\pr}$ thus obtained satisfies the following property:
$$g\zeta=\zeta'\implies g\in N_G(\aaa),\qquad \forall\zeta,\zeta'\in\aaa^{\pr},\ g\in G.$$
Moreover, such a $g$ is unique (for given $\zeta$ and $\zeta'$) up to translation by $Z_G(\aaa)$.
\end{remark}

\begin{proposition}[{\cite[3.2, 3.3]{inv.mot}}]
\label{prop:moment}
\setcounter{savecounter}{\value{equation}}
\setcounter{equation}{0}
\begin{align}
T^*X&=G\U=\overline{G\U_{\Omax}}.\label{eqn:GU}\\
\overline{\Img\Phi}&=G(\Ru\p+\aaa)=\overline{G\aaa}.\label{eqn:cl(Im(mom))}\\
\Img\Phi&\supset G\aaa^{\pr}.\label{eqn:Im(mom)}
\end{align}
\setcounter{equation}{\value{savecounter}}
\end{proposition}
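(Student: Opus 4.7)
The plan is to derive all three statements from the description of $\Phi$ as the action map on $T^{*}X\simeq\bundle{G}{H}{\h^{\perp}}$, combined with the slice decomposition $\Omax\simeq\Ru{P}\times Z$ and the weight-space structure on $\h^{\perp}$ coming from the local structure theorem. I would fix at the outset the $G$-invariant identification $\g\simeq\g^{*}$ so that $\ttt$ is orthogonal to every root space and $\aaa\subseteq\ttt$ is orthogonal to $\ttt_{0}$; with this choice $\uu^{\perp}=\ttt\oplus\uu^{-}$, and $\aaa\subseteq\h^{\perp}$, since $\aaa$ annihilates $\ttt_{0}=\h\cap\ttt$ as well as every root space, while the non-Cartan part of $\h$ lives in the sum of root spaces.

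For \eqref{eqn:GU}, the equality $T^{*}X=G\U$ reduces to $\g^{*}=G\cdot\uu^{\perp}$, itself the elementary statement that every element of $\g$ lies in some Borel subalgebra: given $\psi\in T^{*}X$, pick $g\in G$ with $g\cdot\Phi(\psi)\in\uu^{\perp}$ and observe that $g\psi\in\Phi^{-1}(\uu^{\perp})=\U$. For the density $T^{*}X=\overline{G\U_{\Omax}}$ I would argue by dimension: $T^{*}X$ is irreducible of dimension $2\dim X$, and using $\dim\U_{\Omax}=\dim X+\rk X$ (from the formula preceding the proposition) together with the local structure of $\Omax$, one checks that the action map $G\times\U_{\Omax}\to T^{*}X$ has image of the expected dimension $2\dim X$, hence is dominant. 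The essential ingredient is that the generic $G$-stabilizer of a conormal covector over a point of $\Omax$ has dimension exactly $\dim G-\dim X+\rk X$, which one reads off from the local structure.

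For \eqref{eqn:cl(Im(mom))}, the already-established $\aaa\subseteq\h^{\perp}$ gives $G\aaa\subseteq\Img\Phi\subseteq G(\Ru\p+\aaa)$, where the second inclusion uses the weight-space description $\h^{\perp}\subseteq\Ru\p+\aaa+\Ru\p^{-}$ together with the observation that $\Ru\p^{-}$-conjugation can be used to remove $\Ru\p^{-}$-components of a generic element of $\h^{\perp}$. To identify the closures $\overline{G\aaa}=G(\Ru\p+\aaa)$, I would show that for a generic $\xi=\zeta+\nu$ with $\zeta\in\aaa^{\pr}$ and $\nu\in\Ru\p$, there exists $\eta\in\Ru\p^{-}$ with $\exp(\ad\eta)\xi\in\aaa$; the equation can be solved order-by-order along the height filtration of $\Ru\p^{-}$, thanks to $\ad(\zeta)$ being invertible on $\Ru\p^{-}$ for $\zeta\in\aaa^{\pr}$ (all roots of $\Ru\p^{-}$ take nonzero values on $\zeta$).

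Statement \eqref{eqn:Im(mom)} is then essentially immediate from $\aaa\subseteq\h^{\perp}$: every $\zeta\in\aaa^{\pr}$ lies in $\h^{\perp}=\Phi(T^{*}_{x_{0}}X)\subseteq\Img\Phi$, so the $G$-equivariance of $\Phi$ yields $G\aaa^{\pr}\subseteq\Img\Phi$. The main obstacle throughout is the subspace analysis of $\h^{\perp}$ needed for \eqref{eqn:cl(Im(mom))}: the inclusions $\aaa\subseteq\h^{\perp}\subseteq\Ru\p+\aaa+\Ru\p^{-}$ and the explicit $\Ru\p^{-}$-conjugation that brings a covector into $\Ru\p+\aaa$ or further into $\aaa^{\pr}$ rely on the specific shape of the slice produced by Knop's construction, and the argument must be run compatibly with that choice.
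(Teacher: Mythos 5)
There are two genuine gaps here. First, the claim $\aaa\subseteq\h^{\perp}$, on which your proofs of \eqref{eqn:Im(mom)} and of the inclusion $G\aaa\subseteq\Img\Phi$ rest, is false for a general base point $x_0\in\Omax$, and the justification offered (that the non-Cartan part of $\h$ lies in the sum of root spaces) presupposes that $\h$ is $\ttt$-graded, which fails for general spherical subgroups. For instance, take $X=SL_2/H$ realized as ordered pairs of distinct points of $\PP^1$ with base point $(0,1)$, so that $H$ is the maximal torus fixing $0$ and $1$: here $P=B$, $\aaa=\ttt$, yet $\ttt$ is not orthogonal to $\h$. The inclusion $\zeta\in\h_x^{\perp}$ holds only for $x$ on the flat $Z(\zeta)$, and in this paper the flats (Proposition~\ref{prop:flats}) are \emph{constructed from} Proposition~\ref{prop:moment}, so your closing appeal to ``the specific shape of the slice produced by Knop's construction'' is circular at this point of the development. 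The paper avoids the issue entirely: it shows $\Phi(\U_{\Omax})\subseteq\Ru\p+\aaa$ using that the $U$-, $\Ru{P}$- and $\Ru{P}\rtimes L_0$-orbits in $\Omax$ coincide, deduces surjectivity of the composite $\U_{\Omax}\to\Ru\p+\aaa\to\aaa$ from $T_{x_0}\Omax\simeq\Ru\p\oplus\aaa$, and then obtains $\Ru\p+\aaa^{\pr}\subseteq\Phi(\U_{\Omax})$ by a Jordan-decomposition argument in the solvable algebra $\Ru\p+\aaa$: an element projecting to $\zeta\in\aaa^{\pr}$ is necessarily $\Ru{P}$-conjugate to $\zeta$, because $\z_{\g}(\zeta)=\llll$ forbids a nonzero nilpotent part. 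This yields \eqref{eqn:Im(mom)} without ever placing $\aaa$ inside $\h^{\perp}$.

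Second, the density $T^*X=\overline{G\U_{\Omax}}$ is the real content of \eqref{eqn:GU}, and your dimension count does not close: the generic stabilizer of a covector $\psi\in\U_{\Omax}$ with $\Phi(\psi)\in\aaa^{\pr}$ is, up to finite index, a conjugate of $L_0$, of dimension $\dim G-2\dim X+\rk X$, not $\dim G-\dim X+\rk X$ (already for $X=SL_2/T$ your formula gives $2$ while the stabilizer is finite); and even with the correct number one still needs a transversality statement to conclude that $G\U_{\Omax}$ sweeps out dimension $2\dim X$ rather than repeatedly covering the same orbits. The paper supplies exactly this missing input: since $[\Ru{\p^-},\zeta]=\Ru{\p^-}$ for $\zeta\in\aaa^{\pr}$, the group $\Ru{P^-}$ moves general points of $\U_{\Omax}$ transversally to it, and $\codim\U_{\Omax}=\dim\Ru{P}$ then forces $\Ru{P^-}\U_{\Omax}$ to be dense; the proof of \eqref{eqn:GU} and \eqref{eqn:cl(Im(mom))} is completed by noting that $G\U$ and $G(\Ru\p+\aaa)$ are closed, being $G$-spans of closed $B$-stable sets. (Two smaller slips: under the invariant inner product $\uu^{\perp}=\ttt\oplus\uu=\bb$, not $\ttt\oplus\uu^-$; and to remove the $\Ru\p$-component of $\zeta+\nu$ one conjugates by $\exp(\ad\eta)$ with $\eta\in\Ru\p$, since $[\Ru{\p^-},\zeta]\subseteq\Ru{\p^-}$ cannot cancel anything in $\Ru\p$.) Your observation that $T^*X=G\U$ alone follows from every element of $\g$ lying in a Borel subalgebra is correct and more elementary than what the paper does for that single equality, but it does not touch the density statement.
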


\begin{proof}
It follows from the Local Structure Theorem that the orbits of $U$, $\Ru{P}$ and $\Ru{P}\rtimes L_0$ in $\Omax$ coincide.
Therefore $\Phi(\U_{\Omax})\subseteq(\Ru\p+\llll_0)^{\perp}=\Ru\p+\aaa$.
Besides, $T_{x_0}\Omax=T_{x_0}\Ru{P}x_0\oplus T_{x_0}Lx_0\simeq\Ru\p\oplus\aaa$ whence the composite map
$$\xymatrix{
\U_{\Omax}\ar[r]^-{\Phi} & \Ru\p+\aaa \ar[rr]^-{\text{projection}} && \aaa
}$$
is surjective.

Consider the solvable Lie algebra $\Ru\p+\aaa$.
Given any $\xi\in\Ru\p+\aaa$, its semisimple part is $\Ru{P}$-conjugate to the projection of $\xi$ to~$\aaa$.
But $\z_{\g}(\zeta)=\z_{\g}(\aaa)=\llll$ for any $\zeta\in\aaa^{\pr}$ (here the quasi-affinity is used, see e.g.~\cite[3.1]{inv.mot})
whence $\zeta$ cannot be the semisimple part of some $\xi\in\Ru\p+\aaa$ with a nonzero nilpotent part.
It follows that $\Ru\p+\aaa^{\pr}=\Ru{P}\aaa^{\pr}=P\aaa^{\pr}$ consists of semisimple elements and is contained in $\Phi(\U_{\Omax})$, by surjectivity of $\U_{\Omax}\to\aaa$ and $P$-equivariance of $\Phi$.
By $G$-equivariance of $\Phi$, we get~\eqref{eqn:Im(mom)}. Also we get $\overline{\Phi(\U_{\Omax})}=\Ru\p+\aaa$.

Since $[\Ru{\p^-},\zeta]=\Ru{\p^-}$, $\forall\zeta\in\aaa^{\pr}$, $\Ru{P^-}$ moves general points of $\Ru\p+\aaa$ in a direction transversal to $\Ru\p+\aaa$.
Hence $\Ru{P^-}$ moves general points of $\U_{\Omax}$ in a direction transversal to $\U_{\Omax}$. But $\codim\U_{\Omax}=\dim X-\rk X=\dim\Ru{P}$ whence $\Ru{P^-}\U_{\Omax}$ is dense in $T^*X$.
Consequently, $G(\Ru\p+\aaa^{\pr})=G\aaa^{\pr}$ is dense in $\Img\Phi$. We conclude the proof of \eqref{eqn:GU} and \eqref{eqn:cl(Im(mom))} by noticing that $\U$ and $\Ru\p+\aaa$ are $B$-stable closed subsets of $T^*X$ and $\g$, respectively,
whence their $G$-spans are closed, too.
\end{proof}

\subsection{}\label{subsec:polarization}

Consider the following Cartesian square:
$$
\xymatrix{
\widetilde{T}^*X :=T^*X\mathbin{\underset{\smash{\ttt^*/W}}\times}\ttt^*\ar[r] \ar[dd]  & T^*X \ar[d]_{\Phi} \\
& \g^* \ar[d] \\
**[l] \ttt^* \ar[r] & **{!/r4ex/} \ttt^*/W\simeq\g^*\quot{G}.
}
$$

The horizontal arrows are the quotient maps by the actions of $W$, the action on the top being naturally induced from the bottom by the base change.

\begin{lemma}[{\cite[6.5]{knop:action}}]
\label{lem:knop}
The irreducible components of $\widetilde{T}^*X$ map onto $T^*X$ and are transitively permuted by $W$.
\end{lemma}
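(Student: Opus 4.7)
My plan is to exploit the base-change structure of $\widetilde T^*X\to T^*X$ and reduce the transitivity of the $W$-action to a Galois-theoretic statement about the generic fiber.

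First I would check that the projection $\widetilde T^*X\to T^*X$ is finite, flat, and surjective. By the Chevalley restriction theorem, $\CC[\ttt^*]$ is a free module of rank $|W|$ over $\CC[\ttt^*]^W\simeq\CC[\g^*]^G$, so $\ttt^*\to\ttt^*/W$ is finite, flat, and surjective, and all three properties are preserved by the base change along $T^*X\to\ttt^*/W$. This already yields the first assertion: a finite flat morphism onto an irreducible variety has equidimensional source, because the length of the fibers is locally constant and a component mapping to a proper closed subset of $T^*X$ would force a jump in the fiber length. Hence every irreducible component of $\widetilde T^*X$ has dimension $\dim T^*X$; combined with finiteness of the projection and the irreducibility of $T^*X$, each component must map onto all of $T^*X$.

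For the transitivity I would localize at the generic point of $T^*X$. Set $K=\CC(T^*X)$ and $F=\CC(\ttt^*)^W\simeq\CC(\ttt^*/W)$. Since every irreducible component of $\widetilde T^*X$ dominates $T^*X$, the components are in bijection with the minimal primes of the finite Artinian $K$-algebra $A:=\CC(\ttt^*)\otimes_F K$, and the permutation action of $W$ on these primes coincides with its action on components. The key identity is then $A^W=K$: since $\CC$ has characteristic zero, the averaging operator $\frac{1}{|W|}\sum_{w\in W}w$ shows that taking $W$-invariants commutes with the base change $F\hookrightarrow K$, so $A^W=\CC(\ttt^*)^W\otimes_F K=K$. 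Writing the Artinian ring as a product $A=\prod A_i$ over its minimal primes, the sum of primitive idempotents over any $W$-stable subset of minimal primes is a $W$-invariant idempotent of $A$, hence lies in the field $K$, and therefore equals $0$ or $1$. Thus $W$ permutes the minimal primes of $A$ transitively, which is exactly the desired transitive action on the irreducible components of $\widetilde T^*X$.

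The main subtlety I anticipate is the clean verification of the identification $A^W=K$ together with the correspondence between irreducible components of $\widetilde T^*X$ and minimal primes of $A$; both rest on the preliminary equidimensionality and domination step established above. Once these ingredients are in place, the rest of the argument is purely formal.
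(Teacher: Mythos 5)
The paper offers no proof of this lemma---it is quoted directly from Knop \cite[6.5]{knop:action}---so you are supplying an argument where the paper supplies only a citation. Your overall strategy (finite flatness of $\ttt^*\to\ttt^*/W$ via freeness of $\CC[\ttt^*]$ over $\CC[\ttt^*]^W$, hence every component dominates; then a $W$-invariant-idempotent argument over the generic point of $T^*X$) is the right one, and it is essentially Knop's own. The first half is fine: the cleanest justification that every minimal prime of $\widetilde{T}^*X$ contracts to the zero ideal of $T^*X$ is going-down for flat morphisms, but your fiber-length reasoning amounts to the same thing.

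However, the central object of your second half is ill-defined as written. You set $F=\CC(\ttt^*)^W\simeq\CC(\ttt^*/W)$ and form $A=\CC(\ttt^*)\otimes_F K$ with $K=\CC(T^*X)$, which presupposes a field embedding $F\hookrightarrow K$, i.e.\ that $T^*X\to\ttt^*/W$ is dominant. It is not, in general: by Proposition~\ref{prop:moment} the closure of the image of the momentum map is $\overline{G\aaa}$, so the image of $T^*X$ in $\ttt^*/W\simeq\g^*\quot{G}$ is the closure of $\aaa^{\pr}/W(\aaa)$, of dimension $\rk X$, a proper closed subvariety whenever $\rk X<\rk G$ (for a flag variety it is a single point). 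Any $f\in\CC[\ttt^*]^W$ vanishing on this image is invertible in $F$ but pulls back to $0$ on $T^*X$, so no homomorphism $F\to K$ exists and your $A$ does not make sense. The repair is to tensor over the ring of invariants rather than its fraction field: put $A=\CC[\ttt^*]\otimes_{\CC[\ttt^*]^W}K$, which is exactly the stalk of $\widetilde{T}^*X$ at the generic point of $T^*X$. This is an Artinian $K$-algebra of dimension $|W|$ by freeness, the averaging projector is $\CC[\ttt^*]^W$-linear and commutes with this base change, so $A^W=\CC[\ttt^*]^W\otimes_{\CC[\ttt^*]^W}K=K$ still holds, and your idempotent argument then goes through verbatim. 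With this one correction the proof is complete.
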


The subvariety $\U$ lifts isomorphically to a subvariety $\widetilde\U$ of $\widetilde{T}^*X$ by the mapping $\psi\mapsto\widetilde\psi=(\psi,\zeta)$, where
$\Phi(\psi)=\eta+\zeta\in\uu^{\perp}=\bb=\uu\oplus\ttt$.
By $\widetilde{\U}_{\OO}$, we denote  the image of $\U_{\OO}$ in $\widetilde{\U}$.
It follows from Proposition~\ref{prop:moment}, \eqref{eqn:GU} that $\widetilde{\U}_{\Omax}$ is contained in a unique irreducible component of $\widetilde{T}^*X$; we denote this component by $\widehat{T}^*X$  and call it the \emph{polarized cotangent bundle}.

Thanks to Proposition~\ref{prop:moment}, \eqref{eqn:cl(Im(mom))} and \eqref{eqn:Im(mom)}, and Remark~\ref{rmk:pr}, the image of $T^*X$ in $\ttt^*/W$ contains the dense open subset $\aaa^{\pr}/W(\aaa)$, where
$$
W(\aaa):=N_G(\aaa)/Z_G(\aaa)=N_W(\aaa)/Z_W(\aaa)
$$
is a finite group acting freely on $\aaa^{\pr}$.
We call the preimages of $\aaa^{\pr}/W(\aaa)$ through the various maps under consideration \emph{principal strata} and denote them by adding the superscript ${}^\pr$ to the underlying domains.
In particular, we have
$$
\widetilde{T}^{\pr}X=T^{\pr}X\mathbin{\underset{\aaa^{\pr}/W(\aaa)}\times}\bigsqcup_w\9{\aaa^{\pr}}w,
$$
where the union is taken over representatives of left cosets in $W/N_W(\aaa)$.

The map $\widetilde{T}^{\pr}X\to T^{\pr}X$ is an \'etale finite covering. The variety $\widetilde{T}^{\pr}X$ is smooth, open and dense in $\widetilde{T}^*X$ but disconnected in general.
The connected component $\widehat{T}^{\pr}X$ of $\widetilde{T}^{\pr}X$ containing $\widetilde\U_{\Omax}^{\pr}$ is open in $\widehat{T}^*X$.
The map $\widehat{T}^{\pr}X\to T^{\pr}X$ is an \'etale Galois covering with Galois group the little Weyl group of $X$  %$W_X\subseteq W(\aaa)$,
\cite[\S3]{inv.mot}.

The stabilizer of the polarized cotangent bundle $\widehat{T}^*X$ (or $\widehat{T}^{\pr}X$) in $W$ is $W_X\ltimes W_L$, where $W_L=Z_W(\aaa^{\pr})$ is the Weyl group of $L$; $W_L$ fixes $\widehat{T}^*X$ pointwise whereas $W_X$ acts on $\widehat{T}^*X$ effectively.

\begin{proposition}\label{prop:component}
The set $\Phi^{-1}(\aaa^{\pr})$ is a smooth closed subvariety of $T^{\pr}X$ and its connected components are transitively permuted by $N_G(\aaa)$. There is a unique connected component $\SSS$ of $\Phi^{-1}(\aaa^{\pr})$ intersecting $\U_{\Omax}$. Put $\widetilde\SSS=\SSS\times_{\aaa^{\pr}}\aaa^{\pr}\subset\widetilde{T}^{\pr}X$.
Then
$$
T^{\pr}X\simeq\bundle{G}{N_X}{\SSS}\quad\text{and}\quad
\widehat{T}^{\pr}X\simeq\bundle{G}{L}{\widetilde\SSS},
$$
where $N_X\subseteq N_G(\aaa)$ is the preimage of $W_X\subseteq W(\aaa)$.
\end{proposition}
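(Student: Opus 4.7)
The plan is to prove the proposition in three main steps: establish that $\Phi^{-1}(\aaa^{\pr})$ is a smooth closed subvariety, describe its component structure (identifying $\SSS$), and construct the two induction-type isomorphisms. Closedness of $\Phi^{-1}(\aaa^{\pr})$ inside $T^{\pr}X$ is immediate from the fact that the map $\aaa^{\pr}\to\aaa^{\pr}/W(\aaa)\hookrightarrow\ttt^*/W$ is a finite étale cover (because $W(\aaa)$ acts freely on the principal stratum), so $\Phi^{-1}(\aaa^{\pr})$ is the base change of $T^{\pr}X$ under this cover. Smoothness follows by $G$-equivariance from the explicit structure of $\Phi$ on the dense open subset $\U_{\Omax}\subset T^*X$: the Local Structure Theorem and the argument in the proof of Proposition~\ref{prop:moment} produce a smooth surjection $\U_{\Omax}\to\Ru\p+\aaa\to\aaa$, so $\U_{\Omax}\cap\Phi^{-1}(\aaa^{\pr})$ is smooth, and by Proposition~\ref{prop:moment}\eqref{eqn:GU} its $G$-translates cover $\Phi^{-1}(\aaa^{\pr})$.

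Next I would analyse the components of $\Phi^{-1}(\aaa^{\pr})$. Since $L=Z_G(\aaa)$ fixes $\aaa^{\pr}$ pointwise, $N_G(\aaa)$ preserves $\Phi^{-1}(\aaa^{\pr})$ and acts on its components through the finite quotient $W(\aaa)$. Transitivity of this action is deduced from Lemma~\ref{lem:knop}: pulling back through the polarization $\widetilde{T}^{\pr}X\to T^{\pr}X$, the irreducible components of $\widetilde{T}^{\pr}X$ lying over one fixed copy of $\aaa^{\pr}$ inside $\bigsqcup_w \9{\aaa^{\pr}}w$ are permuted transitively by $N_W(\aaa)$, and projecting back yields transitivity of the $W(\aaa)$-action on the components of $\Phi^{-1}(\aaa^{\pr})$. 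The intersection $\U_{\Omax}\cap\Phi^{-1}(\aaa^{\pr})$ is nonempty and connected, thanks to its explicit description as the pullback of $\aaa^{\pr}$ under the smooth surjection to $\aaa$ mentioned above, so it lies in a unique component, which we define to be $\SSS$.

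For the isomorphism $T^{\pr}X\simeq\bundle{G}{N_X}{\SSS}$: the stabilizer of $\SSS$ in $N_G(\aaa)$ projects onto the subgroup of $W(\aaa)$ preserving $\SSS$, and this subgroup is precisely $W_X$ by how Knop's stabilizer $W_{(X)}$ was defined, so the full stabilizer equals $N_X$. The natural $G$-equivariant morphism $\bundle{G}{N_X}{\SSS}\to T^{\pr}X$, $g*s\mapsto g\cdot s$, is surjective because every point of $T^{\pr}X$ has $\Phi$-image in $G\aaa^{\pr}$ by Proposition~\ref{prop:moment}\eqref{eqn:Im(mom)}, hence lies in $G\cdot\Phi^{-1}(\aaa^{\pr})=G\cdot\SSS$ by the transitivity above. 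Injectivity reduces to Remark~\ref{rmk:pr}: if $g\cdot s=s'$ with $s,s'\in\SSS$, then $g$ sends $\Phi(s)\in\aaa^{\pr}$ to $\Phi(s')\in\aaa^{\pr}$, whence $g\in N_G(\aaa)$, and preservation of $\SSS$ forces $g\in N_X$. Both sides being smooth of the same dimension, the bijection is an isomorphism. The polarized version is then formal: $\widetilde{\SSS}=\SSS\times_{\aaa^{\pr}}\aaa^{\pr}$ meets $\widetilde{\U}_{\Omax}^{\pr}$ and hence lies in $\widehat{T}^{\pr}X$; rigidifying the polarization $\zeta\in\aaa^{\pr}$ (as opposed to its $W(\aaa)$-orbit) replaces the stabilizer $N_X$ by its subgroup acting trivially on $\aaa^{\pr}$, which is $L$, and the same bijectivity argument yields $\widehat{T}^{\pr}X\simeq\bundle{G}{L}{\widetilde{\SSS}}$.

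The main obstacle I anticipate is the transitivity of the $N_G(\aaa)$-action on components of $\Phi^{-1}(\aaa^{\pr})$: this requires a careful translation between the component structure on the $T^{\pr}X$-side and the polarized $\widetilde{T}^{\pr}X$-side, and is the point at which Lemma~\ref{lem:knop} is genuinely used, together with the identification of the stabilizer of $\widehat{T}^*X$ in $W$ as $W_X\ltimes W_L$. The remaining technical work (smoothness, bijectivity, and matching of dimensions) is routine once the component structure has been pinned down.
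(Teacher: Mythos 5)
Your overall architecture matches the paper for the two induction isomorphisms (injectivity via Remark~\ref{rmk:pr}, surjectivity from the description of $\Img\Phi$, and passage from $N_X$ to $L$ by rigidifying the polarization), but the step you yourself flag as the main obstacle --- transitivity of $N_G(\aaa)$ on the connected components of $\Phi^{-1}(\aaa^{\pr})$ --- is where your argument has a genuine gap. Lemma~\ref{lem:knop} concerns the \emph{irreducible components of $\widetilde{T}^*X$}, whereas what you need to control are the \emph{connected components} of the open stratum (equivalently, of $\Phi^{-1}(\aaa^{\pr})$). A priori a single irreducible component of $\widetilde{T}^*X$ may meet the piece of $\widetilde{T}^{\pr}X$ over $\aaa^{\pr}$ in several connected components, so the map from connected components of that piece to irreducible components over $\aaa^{\pr}$ is only surjective, and transitivity of $N_W(\aaa)$ on the target does not descend to the source. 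Establishing that the two notions of component actually agree is precisely the content of Corollary~\ref{cor:conn-comp}, which is proved \emph{after} and \emph{from} Proposition~\ref{prop:component}, so your route is either circular or incomplete. The paper avoids this entirely: from Remark~\ref{rmk:pr} one gets $T^{\pr}X\simeq\bundle{G}{N_G(\aaa)}{\Phi^{-1}(\aaa^{\pr})}$ directly, and since $T^{\pr}X$ is open in the irreducible $T^*X$, hence connected, the connectedness of the total space of this homogeneous bundle forces $N_G(\aaa)$ to permute the components of the fiber transitively; closedness and smoothness of $\Phi^{-1}(\aaa^{\pr})$ also fall out of the same bundle structure, with no need for your separate Local-Structure-Theorem argument.

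Two smaller points. First, your surjectivity claim cites Proposition~\ref{prop:moment}\eqref{eqn:Im(mom)}, which gives $\Img\Phi\supset G\aaa^{\pr}$; you need the opposite inclusion $\Phi(T^{\pr}X)\subseteq G\aaa^{\pr}$, which follows from \eqref{eqn:cl(Im(mom))} together with the observation (in the proof of Proposition~\ref{prop:moment}) that an element of $\Ru\p+\aaa$ whose semisimple part lies in $\aaa^{\pr}$ is itself semisimple and $\Ru{P}$-conjugate into $\aaa^{\pr}$. Second, identifying the stabilizer of $\SSS$ in $W(\aaa)$ with $W_X$ ``by how $W_{(X)}$ was defined'' is too quick: $W_{(X)}$ is defined as a stabilizer for the action on $\B(X)$, not on components of $\Phi^{-1}(\aaa^{\pr})$. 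The paper instead reads this off from the polarized picture: if $\psi,\psi'=n\psi\in\SSS$ with $n\in N_G(\aaa)$ mapping to $w\in W(\aaa)$, then $(\psi,\Phi(\psi))$ and $(\psi,\9{\Phi(\psi)}w)$ both lie in $\widehat{T}^{\pr}X$, and since the stabilizer of $\widehat{T}^{\pr}X$ in $W$ is $W_X\ltimes W_L$ with $W_L=Z_W(\aaa^{\pr})$, this forces $w\in W_X$. You should route your stabilizer computation through $\widehat{T}^{\pr}X$ in the same way.
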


\begin{proof}
Since $G\aaa^{\pr}\simeq\bundle{G}{N_G(\aaa)}\aaa^{\pr}$ by Remark~\ref{rmk:pr}, we have $T^{\pr}X\simeq\bundle{G}{N_G(\aaa)}{\Phi^{-1}(\aaa^{\pr})}$. This yields the claim about $\Phi^{-1}(\aaa^{\pr})$ and its connected components.

The variety $\widetilde{T}^{\pr}X$~contains the
open and closed subset
$$
T^{\pr}X\mathbin{\underset{\aaa^{\pr}/W(\aaa)}\times}\aaa^{\pr}\simeq \bundle{G}{L}{\Bigl(\Phi^{-1}(\aaa^{\pr})\mathbin{\underset{\aaa^{\pr}}\times}\aaa^{\pr}\Bigr)}.
$$
This isomorphism can be proved as follows.
First, note that $T^{\pr}X\times_{\aaa^{\pr}/W(\aaa)}\aaa^{\pr}$ consists of points of the form $(g*\psi,\zeta)$ with $\Phi(\psi)=\zeta\in\aaa^{\pr}$.
Besides, we have
\begin{multline*}
(g'*\psi',\zeta')=(g*\psi,\zeta) \iff \zeta'=\zeta,\ \psi'=n\psi,\ g'=gn^{-1}\ (n\in N_G(\aaa)) \\
\implies \zeta=\zeta'=n\zeta \implies n\in Z_G(\aaa)=L.
\end{multline*}

Since $\widehat{T}^{\pr}X$ is a connected component of $T^{\pr}X\times_{\aaa^{\pr}/W(\aaa)}\aaa^{\pr}$ and $L$ is connected, it follows that $\widehat{T}^{\pr}X\simeq\bundle{G}{L}{\widetilde\SSS}$ for the unique connected component  $\widetilde\SSS$ of $\Phi^{-1}(\aaa^{\pr})\times_{\aaa^{\pr}}\aaa^{\pr}$ intersecting $\widetilde\U_{\Omax}$. Specifically, $\widetilde\SSS=\SSS\times_{\aaa^{\pr}}\aaa^{\pr}$, where $\SSS$ is the unique connected component of $\Phi^{-1}(\aaa^{\pr})$ intersecting $\U_{\Omax}$.

To prove $T^{\pr}X\simeq\bundle{G}{N_X}{\SSS}$, take
$\psi,\psi'\in\SSS$ and $\psi'=n\psi$ with $n\in N_G(\aaa)$.
Set $\zeta=\Phi(\psi)$ and $\zeta'=\Phi(\psi')$.
Then $(\psi,\zeta),(\psi',\zeta')\in\widetilde\SSS$ and $\zeta'=\9{\zeta}w$, with $w$ being the image of $n$ in $W(\aaa)$.
But then $(\psi,\zeta')\in\widehat{T}^{\pr}X$ whence $w\in W_X$ and $n\in N_X$.
\end{proof}

\begin{proposition}
\label{prop:flats}
For general $\zeta\in\aaa^{\pr}$, the set $\SSS(\zeta)=\SSS\cap{}\Phi^{-1}(\zeta)$ consists of a single $L$-orbit.
It projects isomorphically onto a closed $L$-orbit $Z(\zeta)\subset X$ contained in $\Omax$.
The variety $Z(\zeta)$, called a \emph{flat}, may be taken for a slice in the Local Structure Theorem.
\end{proposition}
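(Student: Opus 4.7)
My plan is to exploit the structure of the polarized cotangent bundle provided by Proposition~\ref{prop:component}, together with the Local Structure Theorem, in order to analyze $\SSS(\zeta)$ directly.

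First, I will note that $L$ preserves each fibre $\SSS(\zeta)$. Indeed, $L=Z_G(\aaa)$ fixes $\aaa^{\pr}$ pointwise and, being connected, it preserves each connected component of $\Phi^{-1}(\aaa^{\pr})$; equivalently, this follows from the inclusion $L\subseteq N_X$ in Proposition~\ref{prop:component}. To conclude that $\SSS(\zeta)$ is in fact a single $L$-orbit for general $\zeta$, I will perform a dimension count. From the isomorphism $\widehat{T}^{\pr}X\simeq\bundle{G}{L}{\widetilde\SSS}$ and $\dim\widehat{T}^{\pr}X=2\dim X$, together with $\dim G/L=2(\dim X-\rk X)$ (a consequence of the Local Structure Theorem, since $\Omax\simeq\Ru{P}\times Z$ has dimension $\dim\Ru{P}+\rk X$), one obtains $\dim\widetilde\SSS=2\rk X$; hence the fibres of $\widetilde\SSS\to\aaa^{\pr}$ have dimension $\rk X$. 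On the other hand, an $L$-orbit through a point of the slice $Z\simeq A$ has dimension $\dim A=\rk X$, so a single $L$-orbit inside $\SSS(\zeta)$ is already open. Combining generic smoothness of $\SSS\to\aaa^{\pr}$ with the irreducibility of $\SSS$ will yield irreducibility of $\SSS(\zeta)$ for general $\zeta$, forcing it to coincide with that open $L$-orbit.

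Next, since $X$ is $G$-homogeneous, the restriction of $\Phi$ to any cotangent fibre $T_x^*X$ is a linear isomorphism onto $\h_x^{\perp}$; in particular, $\Phi$ is injective on each cotangent fibre, so the projection $\SSS(\zeta)\to X$ is injective. Its image $Z(\zeta)$ is therefore an $L$-orbit, and for generic $\zeta$ the fibre $\SSS(\zeta)$ meets $\U_{\Omax}$ (which lies over $\Omax$), so $L$-homogeneity forces $Z(\zeta)\subseteq\Omax$. Composing the inclusion $Z(\zeta)\hookrightarrow\Omax$ with the projection $\Omax\simeq\Ru{P}\times Z\to Z\simeq A$ produces an $L$-equivariant surjection $Z(\zeta)\to A$ (using that $L$ acts transitively on $A$ through $L/L_0$); comparison of dimensions shows it is an isomorphism. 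This exhibits $Z(\zeta)$ as a closed section of the $\Ru{P}$-bundle $\Omax\to A$ on which $L_0$, and hence $[L,L]\subseteq L_0$, acts trivially — precisely the characterizing properties of a slice. The decomposition $\Omax\simeq\bundle{P}{L}{Z(\zeta)}\simeq\Ru{P}\times Z(\zeta)$ follows by $P$-equivariance of the natural map $\Ru{P}\times Z(\zeta)\to\Omax$ together with the dimension count.

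The delicate point, and what I expect to be the main obstacle, is justifying the irreducibility (equivalently, connectedness) of the generic fibre $\SSS(\zeta)$: the dimension computation only guarantees that some $L$-orbit inside $\SSS(\zeta)$ is open, not that it exhausts $\SSS(\zeta)$. I would overcome this by producing at least one fibre explicitly as a single $L$-orbit through the Local Structure Theorem description of $\U_{\Omax}\simeq\Ru{P}\times T^*Z$, and then spreading this property over the base $\aaa^{\pr}$ by the flatness and smoothness of the projection $\SSS\to\aaa^{\pr}$ coming from Proposition~\ref{prop:component}.
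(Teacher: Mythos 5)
You have correctly isolated the delicate point — ruling out extra pieces of $\SSS(\zeta)$ beyond the one open $L$-orbit — but neither of your proposed resolutions is valid, and both miss the actual key input. First, ``irreducibility of $\SSS$ plus generic smoothness of $\SSS\to\aaa^{\pr}$ implies irreducibility of the general fibre'' is false: the squaring map $\CC^{\times}\to\CC^{\times}$ is smooth with irreducible source and every fibre disconnected. Likewise, exhibiting one fibre as a single $L$-orbit and ``spreading out'' does not work for a non-proper smooth morphism (the number of connected components of fibres is not locally constant without properness; one can delete a point from one sheet of a double cover). Second, even granting irreducibility of $\SSS(\zeta)$, an open $L$-orbit in an irreducible variety need not exhaust it — there could be smaller $L$-orbits in its boundary inside the fibre — so your conclusion ``forcing it to coincide with that open $L$-orbit'' is a non sequitur.

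What actually closes the argument — and what is also part of the assertion you are proving — is that $Z(\zeta)$ is closed \emph{in $X$} (not merely a closed section of $\Omax\to A$) for general $\zeta$. The paper first uses the Local Structure Theorem to identify $\SSS\cap T^*\Omax$ with $Z\times\aaa^{\pr}$ (two cross-sections of the free $\Ru{P}$-action on $\U_{\Omax}^{\pr}$), so that $\SSS(\zeta)\cap T^*\Omax\simeq Z$ is genuinely a single $L$-orbit, and then invokes Knop's theorem \cite[7.8]{inv.mot} that the flat $Z(\zeta)$ is closed in $X$ for general $\zeta$; under the closed embedding $T^*X\hookrightarrow X\times\g^*$ this makes $\SSS(\zeta)\cap T^*\Omax=Z(\zeta)\times\{\zeta\}$ closed as well as open and dense in $\SSS(\zeta)$, hence equal to it. This closedness is not automatic and genuinely uses shrinking $\aaa^{\pr}$: the rank-one computation in \S\ref{subsec:T2} exhibits regular $\zeta$ (with $a=\pm c(z)$) for which $Z(\zeta)$ is \emph{not} closed. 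Your proof neither states nor uses this fact, so the gap is real. (A smaller point: your dimension comparison only shows that $Z(\zeta)\to A$ is finite surjective, not that it has degree one; the cross-section description of $\SSS\cap T^*\Omax$ is what gives the isomorphism.)
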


\begin{proof}
From $\Phi(\U_{\Omax}^{\pr})=\Ru\p+\aaa^{\pr}=P\aaa^{\pr}\simeq\bundle{P}{L}{\aaa^{\pr}}$, we get $$\U_{\Omax}^{\pr}\simeq\bundle{P}{L}{(\SSS\cap T^*\Omax)}.$$
Besides, the Local Structure Theorem implies $$\U_{\Omax}\simeq\bundle{P}{L}{\U|_Z},$$ where $\U|_Z$ is the restriction of $\U_{\Omax}$ to $Z$.
Hence $\SSS\cap T^*\Omax$ and $\U^{\pr}|_Z$ are two cross-sections for the free $\Ru{P}$-action on $\U_{\Omax}^{\pr}$. We have a commutative $L$-equivariant diagram
$$\xymatrix{
\U|_Z \ar[d]_{\Phi} \ar[rr]^-{\sim} && **[l] T^*Z\simeq Z\times\aaa^* \ar@<-1.2ex>[d]^{\text{projection}} \\
\Ru\p+\aaa \ar[rr]^-{\text{projection}} && **[l] \aaa\simeq\aaa^*.
}$$
Taking the quotient by $\Ru{P}$ of $\U_{\Omax}^{\pr}$, we see that $\SSS\cap T^*\Omax\simeq Z\times\aaa^{\pr}$ (as $L$-varieties)
and for any $\zeta\in\aaa^{\pr}$ the subvariety $\SSS(\zeta)\cap T^*\Omax\simeq Z$ projects isomorphically onto an $L$-orbit $Z(\zeta)\subset\Omax$ intersecting each $\Ru{P}$-orbit in a single point.

For general $\zeta$ the flat $Z(\zeta)$ is closed in $X$ \cite[7.8]{inv.mot}
whence $\SSS(\zeta)\subset T^*\Omax$ is a single $L$-orbit.
Under the closed embedding $T^*X\hookrightarrow X\times\g^*$, the closed subvariety $\SSS(\zeta)$ identifies with $Z(\zeta)\times\{\zeta\}$. This concludes the proof.
\end{proof}

From now on, we specify the definition of $\aaa^{\pr}$ by passing to an open subset whose points satisfy Proposition~\ref{prop:flats}. All previous statements about principal strata remain valid.

\subsection{}

Let $\widetilde W$ be the group generated by the simple reflections
$s_{\alpha}$ of $W$ with relations $s_{\alpha}^2=1$.
Then $W$ is the quotient group of $\widetilde{W}$ defined by the braid relations between the generators.

The following theorem gathers Theorem 6.2 and Theorem 6.4 in~\cite{knop:action}.

\begin{theorem}
\label{thm:B-orb&comp-C}
The assignment  $\OO\mapsto \overline{G\widetilde\U_{\OO}}$
defines a bijection between $\Bmax(X)$ and the set of irreducible components of $\widetilde{T}^{*}X$. Moreover, this map is $\widetilde{W}$-equivariant.

In particular, the reflection operators induce a transitive $W$-action on the set $\Bmax(X)$
and the stabilizer of $\Omax$ under this action is $W_X\ltimes W_L$.

\end{theorem}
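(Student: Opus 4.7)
The plan is to follow the strategy of Knop~\cite{knop:action}, establishing the theorem in two main stages: first, set up the bijection via a dimension count using the polarization picture of \S\ref{subsec:polarization}; second, verify equivariance under the simple reflection operators of \S\ref{subsec:min&refl-C} by reducing to semisimple rank one. The remaining assertions then follow formally from Lemma~\ref{lem:knop} and the stabilizer description of $\widehat{T}^*X$.

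For the bijection, I would first observe that $\widetilde\U_\OO$ is irreducible of dimension $\dim X+\rk\OO$, so $\overline{G\widetilde\U_\OO}$ is an irreducible closed subvariety of $\widetilde{T}^*X$ of dimension at most $2\dim X=\dim\widetilde{T}^*X$. The key claim is that equality holds precisely when $\OO\in\Bmax(X)$. For $\OO=\Omax$ this is exactly Proposition~\ref{prop:moment}\eqref{eqn:GU}, and the resulting component is $\widehat{T}^*X$ by definition. For a general $\OO\in\Bmax(X)$, I would use Proposition~\ref{prop:flats} to exhibit, for generic $\zeta\in\aaa^{\pr}$, a point in $\widetilde\U_\OO$ lying over $\zeta$ with image in $\widetilde{T}^{\pr}X$; Proposition~\ref{prop:component} then identifies the $G$-saturation with a full component, since $\widetilde{T}^{\pr}X\to T^{\pr}X$ is \'etale. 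Injectivity is simultaneous: the pieces $\widetilde\U_\OO$ are pairwise disjoint inside $\widetilde\U$, and each is dense in $\overline{G\widetilde\U_\OO}\cap\widetilde\U$, so distinct orbits $\OO$ land in distinct components.

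The principal technical obstacle is equivariance: for every simple root $\alpha$ and every $\OO\in\Bmax(X)$,
\[
\overline{G\widetilde\U_{\9\OO{s_\alpha}}}=\9{\bigl(\overline{G\widetilde\U_\OO}\bigr)}{s_\alpha}.
\]
By Remark~\ref{rmk:rk-reduction} applied with $S=\{\alpha\}$, the question reduces to the case where $G$ has semisimple rank one and $P_\alpha=G$, in which $\widetilde{T}^*X\to T^*X$ is a degree-two covering outside ramification. The four configuration types (P), (U), (T), (N) are then examined separately. In (P) the orbit $\OO$ is already $P_\alpha$-stable and $s_\alpha\in W_L$, so the identity is tautological. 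In (U) the codimension-one orbit $\OO'$ has rank strictly less than $\rk X$ and so is absent from $\Bmax(X)$; the action of $s_\alpha$ on the single sheet over the generic torus value is trivial. The essential cases are (T) and (N), where one verifies, by direct computation in the local model of the momentum map near a flat, that the two sheets of $\widetilde{T}^*X$ over $\aaa^{\pr}$ correspond to the two codimension-one orbits in type (T) (respectively to the self-dual codimension-one orbit in type (N)), and that $s_\alpha$ swaps them in (T) and fixes them in (N), in full agreement with the combinatorial prescription of \S\ref{subsec:min&refl-C}.

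Once equivariance is in place, the map $\OO\mapsto\overline{G\widetilde\U_\OO}$ is a $\widetilde{W}$-equivariant injection. Since its image contains $\widehat{T}^*X$ and $W$ acts transitively on components of $\widetilde{T}^*X$ (Lemma~\ref{lem:knop}), the map is surjective, yielding the desired bijection. The $\widetilde{W}$-action on $\Bmax(X)$ therefore descends to $W$, forcing the braid relations; transitivity of the $W$-action on $\Bmax(X)$ is inherited from the transitivity on components; and the stabilizer of $\Omax$ coincides with the stabilizer of $\widehat{T}^*X$, which is $W_X\ltimes W_L$ by \S\ref{subsec:polarization}.
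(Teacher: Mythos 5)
First, a point of reference: the paper offers no proof of this theorem. It is stated as a quotation of Knop's Theorems 6.2 and 6.4 in \cite{knop:action}, so your sketch must be measured against Knop's argument. Its overall architecture — conormal pieces $\widetilde\U_{\OO}$, a dimension/density argument for the bijection, reduction to semisimple rank one for $\widetilde W$-equivariance, and Lemma~\ref{lem:knop} for transitivity — is indeed the right skeleton. But two of the steps you fill in are wrong.

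The serious error is in the rank-one case analysis, where you have interchanged the roles of type (U) and types (T)/(N). In type (U) the rank is \emph{preserved}: in the model $SL_2/U\simeq\CC^2\setminus\{0\}$ both the open $B$-orbit and the codimension-one orbit have rank $1$, and in general $\rk\OO'=\rk\OO$. So if $\OO\in\Bmax(X)$ then $\OO'\in\Bmax(X)$ as well, and $s_\alpha$ genuinely swaps the two orbits and the two corresponding irreducible components; your claim that $\OO'$ ``is absent from $\Bmax(X)$'' and that $s_\alpha$ acts trivially is false. Since the type-(U) moves are exactly the ones that realize the transitive $W$-action on $\Bmax(X)$ (they carry $\Omax$ to the other orbits $\9\Omax{w}$), this mistake collapses the whole statement. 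Dually, in types (T) and (N) the codimension-one orbits have rank $\rk\OO-1$ (in $PGL_2/T$ and $PGL_2/N$ the unipotent radical acts transitively on them), so they are \emph{not} of maximal rank and do not index components of $\widetilde{T}^*X$; the unique max-rank orbit in $P_\alpha\OO$ is $\OO$ itself, it is fixed by $s_\alpha$, and the two sheets of $\widehat{T}^{\pr}X$ over $\aaa^{\pr}$ belong to one and the same component (here $s_\alpha\in W_X$), rather than being swapped between two components attached to the codimension-one orbits. A second, independent gap is the injectivity argument: it is not true that $\widetilde\U_{\OO}$ is dense in $\overline{G\widetilde\U_{\OO}}\cap\widetilde\U$ — that closed set contains $\widetilde\U_{\OO'}$ for many other orbits $\OO'$, possibly including other orbits of maximal rank of the same dimension $\dim X+\rk X$, and pairwise disjointness of the pieces then yields no contradiction. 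The correct argument (compare the proof of Corollary~\ref{cor:conn-comp}) restricts to the principal stratum and uses the second projection to $\ttt^*$ to show that $\widetilde\U\cap\9{\widehat{T}^{\pr}X}w$ lies over the single orbit $\9\Omax{w}$.
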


\begin{corollary}\label{cor:conn-comp}
The map $\OO\mapsto G\widetilde\U_{\OO}^{\pr}$ is a bijection between $\Bmax(X)$ and the set of connected components of $\widetilde{T}^{\pr}X$.
Moreover this map is $\widetilde{W}$-equivariant.
\end{corollary}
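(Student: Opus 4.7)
The strategy is to deduce Corollary~\ref{cor:conn-comp} from Theorem~\ref{thm:B-orb&comp-C} by restricting the latter's bijection to the principal stratum. As $\widetilde{T}^{\pr}X$ is smooth, its connected components coincide with its irreducible components, and as it is open and dense in $\widetilde{T}^{*}X$, these correspond exactly to the irreducible components of $\widetilde{T}^{*}X$ that meet it. By Lemma~\ref{lem:knop} every irreducible component of $\widetilde{T}^{*}X$ maps finitely and surjectively onto the irreducible variety $T^{*}X$, hence all components are equidimensional and each meets the dense open $\widetilde{T}^{\pr}X$ in a nonempty open irreducible subset. This furnishes a bijection $C\mapsto C\cap\widetilde{T}^{\pr}X$ between irreducible components of $\widetilde{T}^{*}X$ and connected components of $\widetilde{T}^{\pr}X$; composing with Theorem~\ref{thm:B-orb&comp-C} we obtain the map $\OO\mapsto \overline{G\widetilde{\U}_{\OO}}\cap\widetilde{T}^{\pr}X$ from $\Bmax(X)$.

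It remains to check the equalities $\overline{G\widetilde{\U}_{\OO}}\cap\widetilde{T}^{\pr}X=G\widetilde{\U}_{\OO}^{\pr}$, where by convention $G\widetilde{\U}_{\OO}^{\pr}=G\widetilde{\U}_{\OO}\cap\widetilde{T}^{\pr}X$. The inclusion ``$\supseteq$'' is obvious. For the reverse, let $\widetilde{\psi}\in\overline{G\widetilde{\U}_{\OO}}\cap\widetilde{T}^{\pr}X$ project to $\psi\in T^{\pr}X$. By Proposition~\ref{prop:component} one can write $\psi=g\phi$ with $\phi\in\Phi^{-1}(\aaa^{\pr})\subseteq\U$, and thus $\phi\in\U_{\OO''}$ for a unique $B$-orbit $\OO''$. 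The condition $\Phi(\phi)\in\aaa^{\pr}$ forces $\OO''\in\Bmax(X)$, for otherwise the $\ttt^{*}$-component of $\Phi(\U_{\OO''})$ would span a proper subspace of $\aaa^{*}$ disjoint from $\aaa^{\pr}$. Since the $G$-action on $\widetilde{T}^{*}X$ is by base change and fixes the $\ttt^{*}$-coordinate, the lift $\widetilde{\phi}=g^{-1}\widetilde{\psi}$ lies in $\widetilde{\U}_{\OO''}\cap\widetilde{T}^{\pr}X$ and in the same connected component of $\widetilde{T}^{\pr}X$ as $\widetilde{\psi}$. The disjointness of the connected components established above forces $\OO''=\OO$, whence $\widetilde{\psi}=g\widetilde{\phi}\in G\widetilde{\U}_{\OO}^{\pr}$.

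Finally, the $\widetilde{W}$-equivariance is inherited from Theorem~\ref{thm:B-orb&comp-C}: the $W$-action on $\widetilde{T}^{*}X$ is induced from $\ttt^{*}$ via base change and preserves $\widetilde{T}^{\pr}X$, since the latter's defining condition involves only the $W(\aaa)$-stable open subset $\aaa^{\pr}\subseteq\aaa^{*}$. Both operations $C\mapsto C\cap\widetilde{T}^{\pr}X$ and $G\widetilde{\U}_{\OO}\mapsto G\widetilde{\U}_{\OO}\cap\widetilde{T}^{\pr}X$ commute with simple reflections, transporting the $\widetilde{W}$-equivariance from the theorem to the corollary. The main technical point is the identification carried out in the second paragraph: one must rule out that a point of the open principal stratum could lie in the closure of $G\widetilde{\U}_{\OO}$ without itself lying in $G\widetilde{\U}_{\OO}$, and this is ensured precisely by the smoothness of $\widetilde{T}^{\pr}X$ together with the equidimensionality provided by Lemma~\ref{lem:knop}.
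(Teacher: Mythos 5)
Your first paragraph is sound: smoothness and density of $\widetilde{T}^{\pr}X$ do yield a bijection $C\mapsto C\cap\widetilde{T}^{\pr}X$ between irreducible components of $\widetilde{T}^{*}X$ and connected components of $\widetilde{T}^{\pr}X$, so the corollary reduces to the identity $\overline{G\widetilde\U_{\OO}}\cap\widetilde{T}^{\pr}X=G\widetilde\U_{\OO}^{\pr}$, which is also the crux of the paper's proof. The gap is in your second paragraph, where this identity is ``verified''. The decisive error is the assertion that $g^{-1}\widetilde\psi$ is \emph{the} lift of $\phi$ and lies in $\widetilde\U_{\OO''}$. Writing $\widetilde\psi=(\psi,\zeta')$ with $\zeta'\in\ttt^*$, the $G$-action on $\widetilde{T}^*X$ leaves the second coordinate untouched, so $g^{-1}\widetilde\psi=(\phi,\zeta')$; but the canonical lift of $\phi$ is $(\phi,\zeta_0)$ with $\zeta_0=\Phi(\phi)\in\aaa^{\pr}$, and all one knows is that $\zeta'=\9{\zeta_0}w$ for some $w\in W$. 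Unless $w$ stabilizes $\zeta_0$ (i.e.\ $w\in W_L$), the pair $(\phi,\zeta')$ does not lie in $\widetilde\U$ at all, since membership in $\widetilde\U$ forces $\Phi(\phi)-\zeta'\in\uu\cap\ttt=0$. This is not a borderline case: it occurs for every point of the component attached to $\OO=\9{\Omax}w$ with $w$ nontrivial modulo $W_X\ltimes W_L$. Replacing $g^{-1}\widetilde\psi$ by the true lift only shows that $\widetilde\psi=g\cdot\9{(\phi,\zeta_0)}{w}$ lies in the $w$-translate of the component of $\OO''$, and you are back to the original problem of deciding whether a point of $\9{(\overline{G\widetilde\U_{\OO''}})}{w}\cap\widetilde{T}^{\pr}X$ actually belongs to $G\widetilde\U_{\9{\OO''}w}^{\pr}$ --- the argument becomes circular. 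A secondary weakness: the claim that $\Phi(\phi)\in\aaa^{\pr}$ forces $\OO''\in\Bmax(X)$ is true, but your reason (``the $\ttt^*$-component would span a proper subspace of $\aaa^*$ disjoint from $\aaa^{\pr}$'') is not a proof; the definition of $\aaa^{\pr}$ only excludes intersections with coroot kernels and with the $\9{\aaa^*}w$, not with arbitrary proper subspaces. The correct route is via Proposition~\ref{prop:component} and Corollary~\ref{cor:oftheproof}: $\Phi^{-1}(\aaa^{\pr})$ projects into the union of the maximal-rank orbits $\9{\Omax}w$.

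What is genuinely needed --- and what the paper supplies --- is a direct identification of $\widetilde\U\cap\9{\widehat{T}^{\pr}X}w$ as $U\cdot n(\9{\widetilde\SSS}w)$, combined with the surjectivity of $\Phi\colon\U_{\OO}^{\pr}\to\9{\aaa^{\pr}}w$ (which uses the semisimplicity of elements of $(\Img\Phi)^{\pr}$) and with Proposition~\ref{prop:flats}, asserting that each fiber $\SSS(\zeta)$ is a single $L$-orbit; together these force $n\SSS(\zeta)\subset\U_{\OO}^{\pr}$ and hence $\widetilde\U_{\OO}^{\pr}=\widetilde\U\cap\9{\widehat{T}^{\pr}X}w$, so that $G\widetilde\U_{\OO}^{\pr}$ exhausts the component. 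Note in particular that your proof never establishes that $\widetilde\U_{\OO}^{\pr}$ is nonempty, which is itself part of what must be shown. Your closing remark that the identification ``is ensured precisely by the smoothness of $\widetilde{T}^{\pr}X$ together with the equidimensionality provided by Lemma~\ref{lem:knop}'' is therefore not correct: those facts give the component count but say nothing about which points of $\widetilde\U$ sit in which component.
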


\begin{proof}
By Proposition~\ref{prop:component}, any connected component of $\widetilde{T}^{\pr}X$ is of the form $\bundle{G}{L}{\,\9{\widetilde\SSS}w}$ (${w\in W}$).
It consists of points of the form $\widetilde\varphi=(g*\psi,\9{\zeta}w)$ with $\psi\in\SSS$, $\Phi(\psi)=\zeta\in\aaa^{\pr}$. Note that
$$
\widetilde\varphi\in\widetilde\U \iff g\zeta=\eta+\9{\zeta}w\in\uu\oplus\ttt
\iff g\zeta=u(\9{\zeta}w),\ u\in U \iff g=unl,
$$
where $n\in N_G(T)$ represents $w\in W$ and $l\in L$.
Consequently $\widetilde\U\cap\9{\widehat{T}^{\pr}X}w=U\cdot n(\9{\widetilde\SSS}w)$.

We claim that $\widetilde\U_{\OO}^{\pr}=\widetilde\U\cap\9{\widehat{T}^{\pr}X}w$ for $\OO=\9{\Omax}w$. Since the latter intersection is $B$-stable and closed in $\9{\widehat{T}^{\pr}X}w$,
this equality together with Theorem~\ref{thm:B-orb&comp-C} implies $\9{\widehat{T}^{\pr}X}w=G\widetilde\U_{\OO}^{\pr}$, which yields the corollary.

Let us prove the claim. By Theorem~\ref{thm:B-orb&comp-C},
we have $\widetilde\U_{\OO}^{\pr}\subseteq\widetilde\U\cap\9{\widehat{T}^{\pr}X}w$. To prove the reverse inclusion, it suffices to show that $n\SSS\subset\U_{\OO}^{\pr}$.
Observe that $\9{\widehat{T}^*X}w$ projects onto $\9{\aaa^*}w$ under the second projection in the fiber product.
Since $G\widetilde\U_{\OO}$ is dense in $\9{\widehat{T}^{*}X}w$, we have $\Phi(\U_{\OO})\subseteq\uu+\9{\aaa}w$ and the composite map
$$\xymatrix{
\U_{\OO}\ar[r]^-{\Phi} & \uu+\9{\aaa}w \ar[rr]^-{\text{projection}} && \9{\aaa}w
}$$
is dominant. By $B$-equivariance and fiberwise linearity of the momentum map, it is even surjective. Since $\Img\Phi\cap(\uu+\9{\aaa^{\pr}}w)\subset(\Img\Phi)^{\pr}$ consists of semisimple elements,
we have $\Phi(\U_{\OO}^{\pr})=B(\9{\aaa^{\pr}}w)\supset\9{\aaa^{\pr}}w$.
Hence $\U_{\OO}^{\pr}\subseteq Un\SSS$ intersects each $n\SSS(\zeta)\subseteq\Phi^{-1}(\9{\zeta}w)$ (${\zeta\in\aaa^{\pr}}$).
But the latter is a single $T$-orbit by Proposition~\ref{prop:flats} whence $n\SSS(\zeta)\subset\U_{\OO}^{\pr}$.
\end{proof}

\begin{corollary}[of the proof] \label{cor:oftheproof}
For any $x\in Z=Z(\zeta)$ ($\zeta\in\aaa^{\pr}$) and any $n\in N_G(T)$ we have $nx\in\OO=\9{\Omax}w$, where $w$ is the image of $n$ in $W$.
\end{corollary}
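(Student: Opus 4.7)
The plan is to read off the statement from the tail of the proof of Corollary~\ref{cor:conn-comp}, where the inclusion $n\SSS(\zeta)\subset\U_{\OO}^{\pr}$ for $\OO=\9{\Omax}w$ was already established. What remains is to translate this inclusion, which lives in the cotangent bundle, into a statement about $X$ itself via the bundle projection $\pi:T^*X\to X$.

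First I would recall from Proposition~\ref{prop:flats} that under the closed embedding $T^*X\hookrightarrow X\times\g^*$, the fiber $\SSS(\zeta)$ identifies with $Z(\zeta)\times\{\zeta\}$; consequently $\pi$ restricts to an isomorphism $\SSS(\zeta)\to Z=Z(\zeta)$, so each $x\in Z$ has a unique lift $\psi_x\in\SSS(\zeta)$. Next I would use $G$-equivariance of $\pi$, which gives $\pi(n\psi_x)=nx$ for any $n\in N_G(T)$. By the inclusion just recalled, $n\psi_x\in n\SSS(\zeta)\subset\U_{\OO}^{\pr}\subseteq\U_{\OO}$, and since $\U_{\OO}$ is by construction the conormal bundle of the foliation of $U$-orbits inside the single $B$-orbit $\OO$, we have $\pi(\U_{\OO})\subseteq\OO$. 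Combining these two facts gives $nx=\pi(n\psi_x)\in\OO=\9{\Omax}w$, as claimed.

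I do not expect any serious obstacle: once the key inclusion $n\SSS(\zeta)\subset\U_{\OO}^{\pr}$ is in hand from the proof of Corollary~\ref{cor:conn-comp}, the argument reduces to projecting from $T^*X$ down to $X$ while tracking the $N_G(T)$-action, and both required ingredients (equivariance of $\pi$ and the set-theoretic inclusion $\pi(\U_{\OO})\subseteq\OO$) are immediate from the definitions in \S\ref{subsec:rest-cone}. The statement is therefore essentially a repackaging of the computation already carried out in the preceding proof, as the label ``corollary of the proof'' suggests.
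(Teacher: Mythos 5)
Your proposal is correct and is exactly the paper's argument: the paper's proof consists of the single sentence that the claim follows from the inclusion $n\SSS(\zeta)\subset\U_{\OO}^{\pr}$ established in the proof of Corollary~\ref{cor:conn-comp}, and you have merely spelled out the routine projection step (isomorphism $\SSS(\zeta)\simeq Z(\zeta)$ from Proposition~\ref{prop:flats}, equivariance of the bundle projection, and $\pi(\U_{\OO})\subseteq\OO$) that the paper leaves implicit.
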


\begin{proof}
The claim follows readily from the inclusion $n\SSS(\zeta)\subset\U_{\OO}^{\pr}$.
\end{proof}

In particular, for any $n\in N_X$ and $x\in Z$, we may define another point $x'=n\star x\in Z$ by the formula $nx=ux'$ (for a unique $u\in\Ru{P}$).

\begin{question}\label{que:N_X-action}
Does this define an action of $N_X$ on $Z$?
\end{question}

The answer is affirmative if each $w\in W_X$ can be represented by an element of $H$ (e.g., for symmetric spaces, see \S\ref{ex:symm}): $Z$ is just preserved by $N_X$ and the $\star$-action is the usual action of $N_X$ on $Z$.

%%%%%%%%%%%%%%%%%%%%%%%%%%%%%%%%%%%%%%%%%%%%%%%%%%%%%%%%%%%%%%%%%%%%%%%%%%%%%%%%%%%%%%%%%%%%%%%%
\section{Weyl group action: the real case}
\label{sec:R-tangent}

In this section, we extend Knop's action of the Weyl group $W$ (as recalled in the previous section) to the set of $B^{\sigma}$-orbits contained in the real loci of $B$-orbits of maximal rank.
We shall show that this is possible %only
for a special class of varieties that we precisely identify.

\subsection{}

We first observe that the punctured cone construction over $X$ from \S\ref{subsec:rest-cone} can be carried over~$\RR$.
There is a bijective correspondence between the orbits of $B^{\sigma}$ (resp.\ $G^{\sigma}$) on $X^{\mu}$ and
the orbits of $B^{\sigma}\times\RR^{\times}$ (resp.\ $G^{\sigma}\times\RR^{\times}$) on $\widehat{X}^{\mu}$ compatible with the types of $P_{\alpha}^{\sigma}$-spans and with the reflection operators, as introduced in \S\ref{subsec:min&refl-R}.
Thus we may again assume $X$ to be quasi-affine.

\subsection{}

The real structure on $X$ lifts to $T^*X$.
The momentum map $\Phi$ and the fiber product $\widetilde{T}^*X$ are defined over $\RR$.
The real locus of $T^*X$ is just the real cotangent bundle $T^*X^{\mu}$ of the real analytic manifold $X^{\mu}$ and the real locus of $\widetilde{T}^*X$ is
$$
\widetilde{T}^*X^{\mu}=T^*X^{\mu}\mathbin{\underset{(\ttt/W)^{\sigma}}\times}\ttt^{\sigma}.
$$
The real locus $\widetilde{T}^*X^{\mu}$ is preserved under the action of $W$ on $\widetilde{T}^*X$. The principal stratum $\widetilde{T}^{\pr}X^{\mu}$ is a smooth totally real analytic submanifold of $\widetilde{T}^{\pr}X$.

Note that the projection $\widetilde{T}^{\pr}X^{\mu}\to T^{\pr}X^{\mu}$ is not necessarily surjective:
its image consists of $\psi\in T^*X^{\mu}$ such that $\Phi(\psi)\in (G(\aaa^{\pr})^{\sigma})^{\sigma}=G^{\sigma}(\aaa^{\pr})^{\sigma}$
(the latter equality stems from Lemma~\ref{lem:G/L(R)} below), while
$\Phi$ maps $T^{\pr}X^{\mu}$ to $(G\aaa^{\pr})^{\sigma}$, which is usually bigger than $G^{\sigma}(\aaa^{\pr})^{\sigma}$.

\begin{lemma}
\label{lem:G/L(R)}
The group $G^{\sigma}$ acts transitively on the real locus of $G/L$.
\end{lemma}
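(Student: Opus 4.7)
The plan is to decompose the statement along the $G$-equivariant fibration $\pi\colon G/L\to G/P$, whose fibre $P/L$ identifies with $\Ru{P}$ through the Levi decomposition $P=L\ltimes\Ru{P}$; under this identification, the action of $\Ru{P}$ on $P/L$ by left multiplication becomes translation on $\Ru{P}$. Given a real point $x\in(G/L)^{\sigma}$, I would first move $\pi(x)\in(G/P)^{\sigma}$ to $[P]$ by some $g_{0}\in G^{\sigma}$, and then transport the resulting point $g_{0}\cdot x\in\pi^{-1}([P])\cong P/L$ to $[L]$ by some $p\in P^{\sigma}$; the product $p\,g_{0}\in G^{\sigma}$ then sends $x$ to $[L]$. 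The lemma thus reduces to two transitivity assertions: (a) $G^{\sigma}$ acts transitively on $(G/P)^{\sigma}$, and (b) $P^{\sigma}$ acts transitively on $(P/L)^{\sigma}$.

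Assertion (b) is immediate: through the identification $P/L\cong\Ru{P}$, the subgroup $(\Ru{P})^{\sigma}\subseteq P^{\sigma}$ acts on $(P/L)^{\sigma}=(\Ru{P})^{\sigma}$ by translation, which is transparently transitive.

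For (a), I would exploit the Bruhat decomposition $G/P=\bigsqcup BwP/P$, indexed by the coset space $W/W_{L}$, every piece of which is defined over $\RR$ since $B$ and $P$ are. Each cell $BwP/P\simeq U/(U\cap wPw^{-1})$ is a unipotent group, and the vanishing of the first Galois cohomology of unipotent groups over $\RR$ ensures that its real locus is a single $U^{\sigma}$-orbit (hence a single $B^{\sigma}$-orbit) under translation. To connect distinct cells within one $G^{\sigma}$-orbit, the splitness of $G$ enters decisively: for each simple root $\alpha$ the root vectors $e_{\pm\alpha}\in\g^{\sigma}$ are real, hence $n_{\alpha}=\exp(e_{\alpha})\exp(-e_{-\alpha})\exp(e_{\alpha})$ lies in $N_{G}(T)^{\sigma}$ and projects onto $s_{\alpha}\in W$. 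Thus all Weyl representatives can be chosen in $G^{\sigma}$, and they carry $[P]$ onto representatives of the various Bruhat cells; combined with the cellwise $B^{\sigma}$-transitivity, this yields (a).

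The main obstacle lies in (a): it hinges on lifting the Weyl group to $G^{\sigma}$ via exponentials of real root vectors, which is possible precisely because $G$ is split. Without this splitness the root vectors $e_{\pm\alpha}$ need not be real, Weyl representatives could fail to be realised in $G^{\sigma}$, and the real loci of distinct Bruhat cells could genuinely sit in different $G^{\sigma}$-orbits of $(G/P)^{\sigma}$, making the lemma potentially fail. Once this Weyl lifting is in hand, the remainder reduces to unipotent translations and is automatic by the triviality of the Galois cohomology of unipotent groups.
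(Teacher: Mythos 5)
Your proof is correct and rests on the same ingredients as the paper's: the Bruhat decomposition relative to $P$ together with the fact that, $G$ being split, every $w\in W$ admits a representative in $N_G(T)^{\sigma}$. The paper packages this as a single uniqueness argument --- every $g\in G$ factors uniquely as $g=uvnl$ with $u,v$ in explicit unipotent pieces, $n\in N_G(T)^{\sigma}$ a fixed representative and $l\in L$, so reality of $gL$ forces $u,v\in G^{\sigma}$ --- whereas you split the problem along the fibration $G/L\to G/P$ and derive the reality of the unipotent factors from the vanishing of $H^1(\RR,\cdot)$ for unipotent groups applied cellwise; both are valid, and yours is a faithful, slightly more expanded rendering of the same idea.
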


\begin{proof}
The Bruhat decomposition implies that every $g\in G$ is uniquely written in the form $g=uvnl$,
where $u\in U\cap\9{\Ru{P^-}}w$, $v\in\9{\Ru{P}}w$, $l\in L$, $w\in W$ is the shortest element in a left coset $wW_L$, and $n\in N_G(T)^{\sigma}$ is a fixed representative of $w$.
If $gL\in G/L$ is a real point, then $\sigma(g)\in gL$ whence $u,v\in G^{\sigma}$, and $g$ may be replaced with $gl^{-1}=uvn\in G^{\sigma}$.
\end{proof}

{\begin{lemma}
\label{lem:L(R)}
The equality $L^{\sigma}=T^{\sigma}\cdot[L,L]^{\sigma}$ holds.
\end{lemma}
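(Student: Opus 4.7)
My plan is to reduce the identity $L^{\sigma}=T^{\sigma}\cdot[L,L]^{\sigma}$ to Hilbert 90 for split tori, applied to the coradical quotient $L/[L,L]$. First, since $L$ is a Levi subgroup of $G$ containing the split maximal torus $T$, the group $L$ is itself a split connected reductive $\RR$-group; its derived subgroup $[L,L]$ is then a split semisimple group, and $T':=T\cap[L,L]$ is a maximal torus of $[L,L]$, hence a split connected subtorus of $T$. (To justify $T\cap[L,L]=T'$ on the nose, rather than merely having $T'$ as identity component, I would use the standard isogeny $Z(L)^{\circ}\times[L,L]\to L$ whose kernel $F=Z(L)^{\circ}\cap[L,L]$ is a finite central subgroup contained in $T'$; computing the inverse image of $[L,L]$ under this isogeny shows $T\cap[L,L]=T'$.)

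Next, since $L=T\cdot[L,L]$, the natural map $\pi\colon T\to L/[L,L]$ is surjective with kernel $T'$, so $L/[L,L]$ is itself a split torus. Taking Galois cohomology of the short exact sequence $1\to T'\to T\to L/[L,L]\to 1$ yields
$$T^{\sigma}\longrightarrow(L/[L,L])^{\sigma}\longrightarrow H^{1}(\sigma,T'),$$
and the last term vanishes by Hilbert 90 for the split torus $T'$. In particular, $T^{\sigma}\to(L/[L,L])^{\sigma}$ is surjective.

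To finish, I would take any $l\in L^{\sigma}$, lift its image $\bar l\in(L/[L,L])^{\sigma}$ to some $t\in T^{\sigma}$ along the surjection above, and set $s:=t^{-1}l$. By construction $s\in\ker\pi=[L,L]$, and $\sigma(s)=\sigma(t)^{-1}\sigma(l)=t^{-1}l=s$, so $s\in[L,L]^{\sigma}$. This yields $l=ts\in T^{\sigma}\cdot[L,L]^{\sigma}$; the reverse inclusion is trivial. The only step requiring genuine care is the verification that $T\cap[L,L]$ is \emph{connected} (so that the kernel of $\pi$ is truly a torus and Hilbert 90 applies); without this, a disconnected component group could obstruct the vanishing of $H^{1}(\sigma,\ker\pi)$ and the argument would break. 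This is the main technical point and is handled by the isogeny description of $L$ recalled above.
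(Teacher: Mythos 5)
Your proof is correct, but it follows a genuinely different route from the paper. The paper decomposes $g\in L^{\sigma}$ as $g=th$ with $t\in Z(L)^{\circ}$ and $h\in[L,L]$, observes that $\sigma(t)^{-1}t=\sigma(h)h^{-1}$ lies in the finite group $Z(L)^{\circ}\cap[L,L]\subseteq T\cap[L,L]$, and then corrects both factors by an explicit square root $s$ of this element chosen in $T\cap[L,L]$ with $\sigma(s)=s^{-1}$ (possible because the element is a tuple of roots of unity satisfying $\sigma(z)=z^{-1}$ in split coordinates). You instead pass to the coradical quotient and invoke Hilbert 90 for the split torus $T\cap[L,L]$ to get surjectivity of $T^{\sigma}\to(L/[L,L])^{\sigma}$; this is a clean, structural argument, and the two proofs are really the same computation packaged differently -- the paper's square-root trick is the concrete, self-contained incarnation of the vanishing of $H^{1}(\RR,\mathbb{G}_m)$ that you use abstractly. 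The one point you rightly single out, connectedness of $T\cap[L,L]$, is standard and your isogeny sketch can be completed (from $T=Z(L)^{\circ}\cdot T'$ and $Z(L)^{\circ}\cap[L,L]\subseteq T'$ one gets $T\cap[L,L]\subseteq T'$ directly); alternatively, $T\cap[L,L]$ is the centralizer in the connected group $[L,L]$ of the torus $(T\cap[L,L])^{\circ}$, and centralizers of tori in connected groups are connected. Note that the paper's argument quietly sidesteps this issue, since it only needs the (obvious) inclusion $Z(L)^{\circ}\cap[L,L]\subseteq T\cap[L,L]$ and a square root inside the identity component, so your route trades a little extra foundational care for a shorter main step.
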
}

{\begin{proof}
Take $g\in L^{\sigma}$ and decompose it as $g=th$ with $t\in Z(L)^{\circ}\subseteq T$ and $h\in[L,L]$.
Then $g=\sigma(g)=\sigma(t)\sigma(h)$ implies that $\sigma(t)^{-1}t=\sigma(h)h^{-1}$ is an element of the finite group $Z(L)^{\circ}\cap[L,L]$.
Let $s$ be a square root of this latter element in the split maximal torus $T\cap[L,L]$ of~$[L,L]$: $$\sigma(t)^{-1}t=s^2=\sigma(s)^{-1}s.$$
Replacing $t$ with $ts^{-1}$ and $h$ with $sh$ yields a decomposition of $g$ with factors in $T^{\sigma}$ and $[L,L]^{\sigma}$, respectively.
\end{proof}}

\subsection{}

Let $\OO$ be a $B^{\sigma}$-orbit on $X^{\mu}$ and $\OO_{\CC}$ be the $B$-orbit  on $X$ containing $\OO$.
The conormal bundle $\U_{\OO}$ of the foliation given by the $U^{\sigma}$-orbits in $\OO$ is the restriction to $\OO$ of the real locus $(\U_{\OO_{\CC}})^{\mu}$ of $\U_{\OO_{\CC}}$;
it is open and closed in $(\U_{\OO_{\CC}})^{\mu}$ (in the classical topology).
Recall the right inverse map $\U\to\widetilde\U$ of the projection $\widetilde{T}^*X\rightarrow T^*X$ over $\U$ from \ref{subsec:polarization} and denote the image of $\U_\OO$ through this map by $\widetilde\U_\OO$.

Let $\Bmax(X^\mu)$ be the subset of $\B(X^{\mu})$ consisting of $B^\sigma$-orbits $\OO$ such that $\OO_\CC\in\Bmax(X)$.

\begin{proposition}
\label{prop:B-orb&comp-R}
The sets $G^{\sigma}\widetilde\U_{\OO}^{\pr}\subset\widetilde{T}^{\pr}X^{\mu}$, with $\OO\in\Bmax(X^\mu)$, are open and closed.
They form a partition of  $\widetilde{T}^{\pr}X^{\mu}$:
\begin{equation}\label{eqn:partition}
\widetilde{T}^{\pr}X^{\mu}=\coprod_{\OO\in\Bmax(X^\mu)} G^{\sigma}\widetilde\U_{\OO}^{\pr}.
\tag{P}
\end{equation}
\end{proposition}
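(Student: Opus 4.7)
The plan is to deduce the real partition from the complex decomposition of Corollary~\ref{cor:conn-comp}, combined with the bundle structure of Proposition~\ref{prop:component} and the surjectivity statements of Lemmas~\ref{lem:G/L(R)} and~\ref{lem:L(R)}.

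First, by Corollary~\ref{cor:conn-comp}, $\widetilde{T}^\pr X$ is the disjoint union of its open-and-closed connected components $G\widetilde\U_{\OO_\CC}^\pr$ indexed by $\OO_\CC\in\Bmax(X)$. Restricting to real loci yields a decomposition $\widetilde{T}^\pr X^\mu=\bigsqcup_{\OO_\CC}(G\widetilde\U_{\OO_\CC}^\pr)^\mu$ into open and closed subsets, and only the $\sigma$-stable components contribute. For each such $\OO_\CC$, every $B^\sigma$-orbit $\OO\subseteq\OO_\CC^\mu$ is open and closed in $\OO_\CC^\mu$ (being of full real dimension, since the stabilizer of a real point is defined over $\RR$), yielding $(\widetilde\U_{\OO_\CC}^\pr)^\mu=\bigsqcup_{\OO\subseteq\OO_\CC^\mu}\widetilde\U_\OO^\pr$.

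To show that every real point of $(G\widetilde\U_{\OO_\CC}^\pr)^\mu$ lies in some $G^\sigma\widetilde\U_\OO^\pr$, I would use the identification $G\widetilde\U_{\OO_\CC}^\pr\simeq\bundle{G}{L}{\9{\widetilde\SSS}w}$ from Proposition~\ref{prop:component}, after picking a real representative $n\in N_G(T)^\sigma$ of the Weyl element $w$ with $\OO_\CC=\9{\Omax}w$. By Lemma~\ref{lem:G/L(R)}, every real point of $G/L$ lifts to $G^\sigma$, so every real point of the component can be written $[g,\widetilde s]$ with $g\in G^\sigma$ and $\widetilde s$ real. Combined with the explicit description $\widetilde\U_{\OO_\CC}^\pr=Un(\9{\widetilde\SSS}w)$ from the proof of Corollary~\ref{cor:conn-comp} and the vanishing of Galois cohomology for unipotent groups (cf.\ Remark~\ref{rmk:rk-red-real}), this rewrites the point as $g\widetilde\varphi$ with $g\in G^\sigma$ and $\widetilde\varphi\in(\widetilde\U_{\OO_\CC}^\pr)^\mu$, placing it in the corresponding $G^\sigma\widetilde\U_\OO^\pr$.

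The main obstacle is disjointness. Suppose $g\widetilde\varphi=g'\widetilde\varphi'$ with $g,g'\in G^\sigma$, $\widetilde\varphi\in\widetilde\U_\OO^\pr$, $\widetilde\varphi'\in\widetilde\U_{\OO'}^\pr$. Writing $\widetilde\varphi=[un,\widetilde s]$ and $\widetilde\varphi'=[u'n,\widetilde s']$ with $u,u'\in U^\sigma$ and $\widetilde s,\widetilde s'\in(\9{\widetilde\SSS}w)^\mu$, the equality in $\bundle{G}{L}{\9{\widetilde\SSS}w}$ forces $\widetilde s'=l\widetilde s$ for some $l\in L$, and the identity $l=n^{-1}(u')^{-1}(g')^{-1}gun$ confirms $l\in L^\sigma$. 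By Lemma~\ref{lem:L(R)}, $L^\sigma=T^\sigma\cdot[L,L]^\sigma$, and since $[L,L]$ acts trivially on $Z$, the element $l$ acts on $Z^\mu\simeq A^\sigma$ through the image of $T^\sigma$. By Proposition~\ref{prop:paramet-borel-orb}, this image preserves the $T^\sigma$-orbit decomposition of $Z^\mu$, which is in bijection via Corollary~\ref{cor:paramet-borel-orb} with the $B^\sigma$-orbit decomposition of $\OO_\CC^\mu$. Hence $\OO=\OO'$.

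Finally, openness of each $G^\sigma\widetilde\U_\OO^\pr$ in $\widetilde{T}^\pr X^\mu$ follows from a submersion argument: at any point $(g,\widetilde\varphi)$, the differential of the action map $G^\sigma\times\widetilde\U_\OO^\pr\to\widetilde{T}^\pr X^\mu$ is the real part of the surjective complex differential $\g\oplus T_{\widetilde\varphi}\widetilde\U_{\OO_\CC}^\pr\to T_{g\widetilde\varphi}\widetilde{T}^\pr X$ (surjective since $G\widetilde\U_{\OO_\CC}^\pr$ is open in $\widetilde{T}^\pr X$). Closedness of each piece then follows from the finite disjoint partition.
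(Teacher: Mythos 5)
Your argument is correct and follows essentially the same route as the paper: both rest on Corollary~\ref{cor:conn-comp} for the decomposition into complex connected components, Lemma~\ref{lem:G/L(R)} to realize the real locus of each component as a $G^{\sigma}$-homogeneous bundle over $G^{\sigma}/\9{L^{\sigma}}w$, and Lemma~\ref{lem:L(R)} together with the pointwise triviality of the $[L,L]$-action on the slice to separate the pieces. The only cosmetic difference is that in your disjointness step you cite Proposition~\ref{prop:paramet-borel-orb} and Corollary~\ref{cor:paramet-borel-orb}, which concern $Z\subset\Omax$, whereas for a general $\OO_{\CC}=\9{\Omax}w$ all that is needed (and all the paper uses) is that the connecting element acts through $T^{\sigma}\subset B^{\sigma}$ and therefore preserves the $B^{\sigma}$-orbits in $\OO_{\CC}^{\mu}$.
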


\begin{proof}
Let $\OO_{\CC}=\9{\Omax}w$ and $n\in N_G(T)^{\sigma}$ represent $w\in W$.
Recall from Corollary~\ref{cor:conn-comp} that each connected component of $\widetilde{T}^{\pr}X$ is of the form
$$
\9{\widehat{T}^{\pr}X}w = G\widetilde\U_{\OO_{\CC}}^{\pr} \simeq \bundle{G}{L}{\9{\widetilde\SSS}w} \simeq \bundle{G}{\9Lw}{n(\9{\widetilde\SSS}w)},
$$
with
$$
\widetilde\U_{\OO_{\CC}}^{\pr}=U\cdot n(\9{\widetilde\SSS}w).
$$

By Lemma~\ref{lem:G/L(R)}, the real locus $\9{\widehat{T}^{\pr}X}w^{\mu}$ of $\9{\widehat{T}^{\pr}X}w$ is isomorphic to
$\bundle{G^{\sigma}}{\9{L^{\sigma}}w}{n(\9{\widetilde\SSS^{\mu}}w)}$.
In particular,
$$
\9{\widehat{T}^{\pr}X}w^{\mu}=G^{\sigma}(\widetilde\U_{\OO_{\CC}}^{\pr})^{\mu} \quad\text{and}\quad(\widetilde\U_{\OO_{\CC}}^{\pr})^{\mu}=U^{\sigma}\cdot n(\9{\widetilde\SSS^{\mu}}w).
$$
Under the bundle projection $\widetilde{T}^{*}X\to X$, the variety $n(\9{\widetilde\SSS^{\mu}}w)$ maps into $\OO_{\CC}^{\mu}$
and the preimages of the $B^{\sigma}$-orbits $\OO\subseteq\OO_{\CC}^{\mu}$ are open and closed in $n(\9{\widetilde\SSS^{\mu}}w)$, pairwise disjoint, and preserved by $\9{L^{\sigma}}w$.
The latter claim stems from Lemma~\ref{lem:L(R)}
since $[\9Lw,\9Lw]$ fixes $n(\9{\widetilde\SSS}w)$ pointwise.
This implies that the sets $G^{\sigma}\widetilde\U_{\OO}^{\pr}$ are also open and closed in $\9{\widehat{T}^{\pr}X}w^{\mu}$, pairwise disjoint, and collectively cover the whole $\9{\widehat{T}^{\pr}X}w^{\mu}$.
This proves the proposition.
\end{proof}

\subsection{}\label{subsec:T2}

By Remark~\ref{rmk:no-braid}, we already know that the varieties $X$ for which we can not expect an action of the (little) Weyl group belong to the class of the varieties admitting simple roots of type (T2).
In this subsection, we %thus
describe the objects previously introduced in case $G$ is of semisimple rank $1$ and $X=G/H$ is such that the simple root $\alpha$ is of type (T2).

By assumption, $G=[G,G]\cdot T^{\alpha}$ with $[G,G]\simeq SL_2(\CC)$ or $PGL_2(\CC)$, $T^{\alpha}=(\Ker\alpha)^{\circ}=Z(G)^{\circ}$ , and $H$ projects onto a torus in $G/Z(G)\simeq PGL_2(\CC)$.

We suppose that $B\cap[G,G]$ is the standard Borel subgroup consisting of upper triangular matrices and $T_{\alpha}=T\cap[G,G]$ is the standard diagonal torus.
Furthermore, we choose the base point $x_0$ such that it is contained in $\Omax^{\mu}$, $H$ projects onto $PSO_{1,1}(\CC)$ and $H^{\sigma}$ projects onto $PSO_{1,1}(\RR)$.
The two open $B^{\sigma}$-orbits in $G^{\sigma}x_0$ are $\OO=B^{\sigma}x_0$ and $\OO'=B^{\sigma}n_{\alpha}x_0$, where $n_{\alpha}$ is a representative of $s_{\alpha}$ in $N_{[G,G]}(T)^{\sigma}$ given by the matrix
$$
\begin{pmatrix}
  0 &-1 \\
  1 & 0 \\
\end{pmatrix}.
$$

%\subsection{}

Let $\ttt_{\perp}$ denote the orthocomplement of $\ttt_0=\h\cap\ttt^{\alpha}$ in $\ttt^{\alpha}$ with respect to the invariant inner product on $\g$. Then we have%$\h$ consists of the pairs
$$\h=\left\{
\left(
\begin{pmatrix}
  0 & c \\
  c & 0 \\
\end{pmatrix},z
\right)\;:\;c\in\CC,\ z=z_0+z_{\perp}\in\ttt_0\oplus \ttt_{\perp}\right\},
$$
where $z_{\perp}=z_{\perp}(c)$ depends linearly on $c$. The invariant inner product on $\g$ may be chosen so that its restriction to $\sgl_2$ is the standard trace product (in particular, the above matrix has inner square $2c^2$) and $(z_{\perp},z_{\perp})=2c^2$.

The momentum map $\Phi$ identifies $T_{x_0}^*X$ with $\h^{\perp}=\s\oplus\s^{\perp}$, where $\s=\h^{\perp}\cap\sgl_2(\CC)$, i.e.,
$$
\s=\left\{
\begin{pmatrix}
  a & -b \\
  b & -a \\
\end{pmatrix}\,:\,a,b\in\CC
\right\}
$$
and $\s^{\perp}\subset\h\oplus\ttt_{\perp}$ reads as
\begin{equation*}
\tag{$\bot$}
\s^{\perp}=\left\{
\left(
\begin{pmatrix}
  0 &-c \\
 -c & 0 \\
\end{pmatrix},z
\right)\;:\;c\in\CC,\ z\in\ttt_{\perp}
\right\},
\end{equation*}
where $c=c(z)$ depends linearly on $z$, so that $c(z_{\perp}(c))=c$. The group $H$ acts on $\h^{\perp}$ by pseudo-orthogonal transformations of $\s$ preserving the quadratic form $\det=b^2-a^2$.

\begin{proposition}\label{prop:T2}
The reflection operator corresponding to $\alpha$ interchanges $\OO$ and $\OO'$
whereas the action of $s_{\alpha}$ in $\widetilde{T}^{\pr}X^{\mu}$ interchanges the respective cells of the partition \textup{(P)} only if $\s^{\perp}\subseteq\ttt_{\perp}$.
\end{proposition}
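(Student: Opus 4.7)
Part (i) is essentially a reading of the classification of $P_\alpha^\sigma$-orbit types in \S\ref{subsec:min&refl-R}: with the hypotheses on $H$ and $x_0$, the image $R_{x_0}$ of the real stabilizer $(P_\alpha)_{x_0}^\sigma$ inside $PGL_2(\RR)$ equals $PSO_{1,1}(\RR)$, i.e.\ configuration (T2), and the defining recipe of the reflection operator then swaps the two open $B^\sigma$-orbits $\OO = B^\sigma x_0$ and $\OO' = B^\sigma n_\alpha x_0$ contained in $P_\alpha^\sigma x_0 = \Omax^\mu$.

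For part (ii) the plan is to make the two cells explicit at the real points $x_0$ and $x_0' = n_\alpha x_0$ and then translate the cell-interchange condition into a conjugation equation inside $\h^\perp$. Using the decomposition $\h^\perp = \s \oplus \s^\perp$ together with the defining condition $\Phi(\psi) \in \bb$, real elements of $\widetilde\U_\OO^\pr$ at $x_0$ are parameterized by $(a,z)\in\RR^\times\times\ttt_\perp^\sigma$ via
\[
\xi = aH_\alpha - 2c(z)\,e_\alpha + z, \qquad \zeta = aH_\alpha + z,
\]
and analogously $\widetilde\U_{\OO'}^\pr$ at $x_0'$ is parameterized by $n_\alpha * \xi'$ with $\xi' = aH_\alpha - 2c(z)\,e_{-\alpha} + z$ and $\zeta' = -aH_\alpha + z$. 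Matching $s_\alpha\zeta = \zeta'$ pins down the parameters uniquely. Assuming now that the cells are interchanged, pick $g \in G^\sigma$ with $g^{-1}(s_\alpha\widetilde\psi) \in \widetilde\U_{\OO'}^\pr$; the base-point condition $g^{-1}x_0 \in \OO'$ yields a decomposition $g^{-1} = b\,n_\alpha\,h$ with $b \in B^\sigma$ and $h \in H^\sigma$, and the $B^\sigma$-factor is irrelevant, as it preserves both the $\ttt$-part of $\Phi$ and the defining $\bb$-condition. Thus the problem collapses to finding $h\in H^\sigma$ such that $n_\alpha(h\xi h^{-1})n_\alpha^{-1} \in \bb$ has $\ttt$-part $-aH_\alpha + z$. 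Since $\ttt^\alpha$ is central in $\g$ and $H^\sigma$ preserves the decomposition $\s\oplus\s^\perp$, $h$ automatically fixes $\xi_{\s^\perp}$, and the equation reduces to matching $\s$-components: $(a,c(z)) \mapsto (a,-c(z))$ in the $(a,b)$-parameterization of $\s$.

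The decisive step is that $H^\sigma$ acts on $\s$ through its projection to the connected group $PSO_{1,1}(\RR)$ of real Lorentz boosts $(a,b)\mapsto(a\cosh 2t + b\sinh 2t,\,a\sinh 2t + b\cosh 2t)$; a direct calculation shows that the boost carrying $(a,c(z))$ to $(a,-c(z))$ satisfies $\tanh t = -c(z)/a$, which has a real solution if and only if $|c(z)| < |a|$. For the cell-interchange to hold uniformly on $\widetilde\U_\OO^\pr$ --- in particular for $z$ with $c(z)\ne 0$ and $a$ with $|a|\le|c(z)|$, an open condition in the parameter space --- one must therefore have $c(z) = 0$ for every $z\in\ttt_\perp^\sigma$, which by linearity of $z\mapsto c(z)$ is equivalent to $\s^\perp\subseteq\ttt_\perp$. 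The main obstacle is organizational --- keeping straight the identifications $T^*X\simeq\bundle{G}{H}{\h^\perp}$, $\h^\perp=\s\oplus\s^\perp$, and the Bruhat-type decomposition of $g\in G^\sigma$, while maintaining visibility of the real-form constraint on $h$ --- but conceptually the obstruction captures the discrepancy between the complex Lorentz group $PSO_{1,1}(\CC)$ (where the analogous equation is always solvable, which is why the corresponding complex cells are always permuted by $s_\alpha$) and its real form $PSO_{1,1}(\RR)$.
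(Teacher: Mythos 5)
Your proof is correct and rests on the same underlying geometry as the paper's — the action of the connected real group $PSO_{1,1}(\RR)$ of hyperbolic rotations on the plane $\s^{\sigma}$ relative to the lines $b=\pm c$ — but it is organized quite differently. The paper enumerates and labels the connected components of the fibre $\widetilde{T}^{\pr}_{x_0}X^{\mu}$ (twelve when $\s^{\perp}\not\subseteq\ttt_{\perp}$, four otherwise), identifies each cell of the partition~\eqref{eqn:partition} over $G^{\sigma}x_0$ as a union of six (resp.\ two) $G^{\sigma}$-connected components, and observes that $s_{\alpha}$ only flips the second label, so the components of type $(\pm2',\cdot)$, $(\pm2'',\cdot)$ stay in their own cell; the obstruction is the mere presence of those extra components. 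You instead test membership of $s_{\alpha}\widetilde\psi$ in $G^{\sigma}\widetilde\U_{\OO'}^{\pr}$ directly, via the factorization $g^{-1}=bn_{\alpha}h$ and the boost equation, obtaining the sharp criterion $|c(z)|<|a|$; this is more computational and makes the obstruction quantitative, whereas the paper's bookkeeping also delivers the full cell structure reused later in Theorem~\ref{thm:real-Knop-action}. Two minor points when writing it up: the relevant locus is $0<|a|<|c(z)|$ (your condition $|a|\le|c(z)|$ is not open, and $|a|=|c(z)|$ is excluded from $\aaa^{\pr}$ anyway), and you should state explicitly that $\mathrm{Ad}(H^{\sigma})$ on $\sgl_2$ factors through the \emph{connected} group $PSO_{1,1}(\RR)$, which is what forces the boost parameter to be real.
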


In order to prove the above statement, we proceed to describe the cells of the partition~(P).
The proof of Proposition~\ref{prop:T2} is postponed to the end of this subsection.

Under the natural identification $\aaa^*\simeq\ttt_0^{\perp}=\ttt_{\alpha}\oplus\ttt_{\perp}$,
we denote the elements of $\aaa^*$ as pairs
$$
\zeta=\left(
\begin{pmatrix}
  a & 0 \\
  0 &-a \\
\end{pmatrix},z
\right).
$$

\begin{lemma}
The principal stratum $\aaa^{\pr}$ is given by the open conditions $a\ne0$ and $a\ne\pm c(z)$.
\end{lemma}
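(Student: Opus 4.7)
The plan is to match the two-stage definition of $\aaa^{\pr}$ against the two inequalities claimed in the lemma.

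At the first stage, $\aaa^{\pr}$ excludes the proper intersections of $\aaa^*$ with coroot kernels and with the Weyl translates $\9{\aaa^*}w$, $w\in W$. Since $W=\{1,s_\alpha\}$ acts on $\aaa^*=\ttt_\alpha\oplus\ttt_\perp$ by negating the $\ttt_\alpha$-factor and fixing $\ttt_\perp$, we have $\9{\aaa^*}{s_\alpha}=\aaa^*$, so the Weyl translates contribute no condition. The only coroot (up to sign) is $\alpha^\vee\in\ttt_\alpha$, and the pairing with $\zeta$ equals $2a$ under our identification $\aaa^*\simeq\ttt_0^\perp$; therefore $\aaa^*\cap\ker\alpha^\vee=\{a=0\}$, yielding the first inequality $a\ne 0$.

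For the second stage, I would refine $\aaa^{\pr}$ so that Proposition~\ref{prop:flats} holds at every $\zeta$. Since $\alpha$ is of type~(T), we have $P=B$ and $L=T$. Under the identification $T^*X\simeq\bundle{G}{H}{\h^\perp}$ with $\Phi$ realized as the coadjoint action map, the $T$-orbits on $\Phi^{-1}(\zeta)$ correspond bijectively to the $H$-orbits on $\h^\perp\cap G\zeta$. Using the explicit decomposition $\h^\perp=\s\oplus\s^\perp$, a general $\xi\in\h^\perp$ has $\sgl_2$-part
\begin{equation*}
\xi_{\sgl_2}=\begin{pmatrix} a' & -b-c(z') \\ b-c(z') & -a' \end{pmatrix}
\end{equation*}
and $\ttt^\alpha$-part $z'\in\ttt_\perp$. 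Since $\ttt^\alpha$ is central in $\g$, the condition $\xi\in G\zeta$ forces $z'=z$, and matching the determinant of $\xi_{\sgl_2}$ with $\det\zeta_{\sgl_2}=-a^2$ yields the conic
\begin{equation*}
a'^2-b^2=a^2-c(z)^2.
\end{equation*}

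This conic is a non-degenerate affine conic when $a^2\ne c(z)^2$ and degenerates into the pair of lines $a'=\pm b$ through the origin when $a^2=c(z)^2$. The group $H$ acts on $\s$ through its image $PSO_{1,1}(\CC)\simeq\CC^\times$ preserving the quadratic form $b^2-a'^2$: this action is transitive on every non-zero level set and splits the null cone into the origin together with the two punctured lines. Hence the fiber $\Phi^{-1}(\zeta)$ consists of a single $L$-orbit precisely when $a\ne\pm c(z)$, in which case $\SSS(\zeta)$ is that single $L$-orbit and projects isomorphically onto a closed $L$-orbit in $X$, as required by Proposition~\ref{prop:flats}. The second stage thus contributes the inequality $a\ne\pm c(z)$, and combining both stages produces the stated description of $\aaa^{\pr}$.

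The main obstacle will be checking cleanly that the closedness part of Proposition~\ref{prop:flats} is governed by the very same locus as the single-orbit part — that is, that the degeneration of the conic at $a^2=c(z)^2$ is exactly what would allow the $T$-orbit $Z(\zeta)$ to accumulate on the boundary divisors of $\Omax$ in $X$ — together with correctly accounting for the $PSO_{1,1}(\CC)$ orbit count on the non-degenerate hyperbola over $\CC$, so that no smaller stratum inside $\{a=\pm c(z)\}$ sneaks back into the principal stratum.
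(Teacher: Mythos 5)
Your route is genuinely different from the paper's. The paper treats the two inequalities asymmetrically: $a\ne0$ is the first-stage condition $\alpha(\zeta)\ne0$, while $a\ne\pm c(z)$ is identified as exactly the condition for the flat $Z(\zeta)=Tu_{\alpha}(s)x_0$ (with $s=-c(z)/a$) to be closed in $X$; this is checked by writing out the double coset $T(SL_2)\cdot u_{\alpha}(s)\cdot SO_{1,1}$ in $SL_2$ and verifying that the two obvious equations cut it out precisely unless $s=\pm1$. You instead work entirely on the momentum-map side: you identify the $L$-orbits in $\Phi^{-1}(\zeta)$ with the $H$-orbits on $\h^{\perp}\cap G\zeta$ and observe that the latter set is the conic $a'^2-b^2=a^2-c(z)^2$ in $\s$. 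Your orbit count is correct: over $\CC$ the hyperbola $uv=\lambda\ne0$ is a single orbit of $PSO_{1,1}(\CC)\simeq\CC^{\times}$, and the null cone splits into the origin and two punctured lines; since $a\ne0$ is already imposed, the degeneration locus is exactly $\{a=\pm c(z)\}$. Your remark that the Weyl translates $\9{\aaa^*}w$ impose no first-stage condition is also correct, since $s_{\alpha}$ preserves $\aaa^*=\ttt_{\alpha}\oplus\ttt_{\perp}$.

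The one genuine gap is the step you flag yourself: you never connect the single-orbit property of $\Phi^{-1}(\zeta)$ to the closedness of $Z(\zeta)$, which is what the second stage of the definition of $\aaa^{\pr}$ (namely, the validity of Proposition~\ref{prop:flats}) actually requires. Fortunately, this closes in one line using material already in the paper: since $T^*X\hookrightarrow X\times\g^*$ is a closed immersion, the fiber $\Phi^{-1}(\zeta)=T^*X\cap(X\times\{\zeta\})$ is closed in $X\times\{\zeta\}\simeq X$, and the bundle projection is injective on it (the intersection of $g\h^{\perp}$ with $\{\zeta\}$ is at most one point). Hence when $\Phi^{-1}(\zeta)$ is a single $L$-orbit its image $Z(\zeta)$ is a closed $L$-orbit, and the bijective equivariant morphism of homogeneous spaces is an isomorphism in characteristic $0$; conversely, for $a=\pm c(z)$ the two punctured-line orbits contain the origin orbit in their closures, so the corresponding flats are not closed and such $\zeta$ must indeed be excluded. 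With this observation added, your argument is complete and gives an arguably cleaner proof than the explicit double-coset computation, at the price of invoking the closed embedding of $T^*X$ into $X\times\g^*$ rather than elementary matrix manipulation.
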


\begin{proof}
The first condition is just $\alpha(\zeta)\ne0$. We show that the second condition assures closedness of $Z(\zeta)$.

We have $Z(\zeta)=Tx$, where $x=u_{\alpha}(s)x_0$ is such that $u_{\alpha}(s)\h^{\perp}\ni\zeta$ or, equivalently, 
$$
\h^{\perp}\ni u_{\alpha}(-s)\zeta=
\left(
\begin{pmatrix}
  a & 2as \\
  0 & -a  \\
\end{pmatrix},z
\right).
$$
This yields $s=-c(z)/a$. Now $Z(\zeta)$ is closed in $X$ if and only if $Tu_{\alpha}(s)H$ is closed in $G$ or, equivalently,
$$
T(SL_2)\cdot
\begin{pmatrix}
  1 & s \\
  0 & 1 \\
\end{pmatrix}
\cdot SO_{1,1}
$$
is closed in $SL_2$. Taking
$$
\begin{pmatrix}
  t & 0      \\
  0 & t^{-1} \\
\end{pmatrix}
\in T(SL_2)
\quad\text{and}\quad
\begin{pmatrix}
  p & q \\
  q & p \\
\end{pmatrix}
\in SO_{1,1}
\quad(p^2-q^2=1),
$$
we get the set of matrices of the form
$$
g=\begin{pmatrix}
 tp+tsq & tq+tsp  \\
t^{-1}q & t^{-1}p \\
\end{pmatrix}.
$$
They satisfy equations $\det{g}=g_{11}g_{22}-g_{12}g_{21}=1$ and $g_{12}g_{22}-g_{11}g_{21}=s$, and these are defining equations unless $s=\pm1$. (For $s=\pm1$, the matrices with $g_{11}=\pm g_{12}$, $g_{21}=\mp g_{22}$, and $g_{11}g_{22}=1/2$, not of the above form, satisfy these equations, too.)
\end{proof}

The $G$-span of $\aaa^{\pr}$ consists of the elements $\xi=(\xi_0,z)\in\sgl_2(\CC)\oplus\ttt_{\perp}$ with $\det\xi_0\ne0,-c(z)^2$. The principal stratum $T_{x_0}^{\pr}X$ is thus determined by the inequalities
$$
\begin{vmatrix}
  a & -b-c \\
b-c &   -a \\
\end{vmatrix}
=b^2-a^2-c^2\ne0\qquad\text{ and }\qquad a\ne\pm b,
$$
where the matrix involved on the left hand side is the $\sgl_2$-part $\xi_0$ of $\xi=\Phi(\psi)\in\h^{\perp}$ for $\psi\in T_{x_0}^*X$. The fiber  of $\U_{\OO_{\CC}}$ at $x_0$ is identified (via $\Phi$) with $\h^{\perp}\cap\bb$ and determined by the equality $b=c$ or, equivalently,
$$
\xi_0=\begin{pmatrix}
        a & -2c \\
        0 & -a  \\
      \end{pmatrix}
\qquad\text{ and }\qquad c=c(z).
$$

The polarized cotangent bundle is a homogeneous fiber bundle over $X$ with fiber $\widetilde{T}^*_{x_0}X\subset T^*_{x_0}X\times\aaa^*$ consisting of the pairs $(\psi,\zeta)$ such that
$\Phi(\psi)=(\xi_0,z)$, $\zeta=(\zeta_0,z)$ and $z\in\ttt_{\perp}$ with
\begin{multline*}
\xi_0=
\begin{pmatrix}
  a & -b-c \\
b-c &   -a \\
\end{pmatrix},
\quad
\zeta_0=
\begin{pmatrix}
\tilde{a} &        0  \\
       0  &-\tilde{a} \\
\end{pmatrix},\quad
c=c(z),\quad\text{and}\quad
\det\xi_0=b^2-a^2-c^2=-\tilde{a}^2.
\end{multline*}

The real cotangent space $T_{x_0}^*X^{\mu}$ is identified with $(\h^{\perp})^{\sigma}$ on which $H^{\sigma}$ acts by hyperbolic rotations in the plane $\s^{\sigma}$ as depicted right below
$$
\xymatrix{
\ar@{-}@<-2.4pt>[rrdd] \ar@{-}@<-2.8pt>[rrdd] \POS+<8pt,-7pt>\ar@/_1pc/[rr]+<-11pt,-6pt> \POS+<-5pt,-5pt>\ar@/^1pc/[dd]+<3pt,9pt> & b & \\
\ar[rr] & \relax \POS+<-16pt,-8pt> \ar@{-}+<34pt,17pt> \POS+<42pt,21pt> \ar+<16pt,8pt> \POS+<17pt,3pt> *{c} & a \\
\ar@{-}@<-0.2pt>[rruu] \ar@{-}@<-0.6pt>[rruu] &**{!<0pt,2pt>=<0pt,0pt>} \ar[uu]
& \relax \POS+<-12pt,3pt>\ar@/_1pc/[ll]+<9pt,4pt> \POS+<7pt,4pt>\ar@/^1pc/[uu]+<-5pt,-11pt> \\
}
$$

The above description yields the following lemmas.

\begin{lemma}\label{lem:T2}
The principal stratum $T_{x_0}^{\pr}X^{\mu}$ is given by ${b^2-a^2-c^2\ne0}$, $a\ne\pm b$.
It consists of eight connected components unless $\s^{\perp}\subseteq\ttt_{\perp}$ (i.e., $c=0$ identically on~$\ttt_{\perp}$), in which case it consists of four connected components.
These components, denoted by $\pm1$, $\pm2$, $\pm2'$, $\pm2''$, are represented in the following pictures:
$$
\begin{array}{c@{\qquad}c@{\qquad\quad}c}
\xymatrix{
\ar@{-}@<-2.4pt>[rrdd] \ar@{-}@<-2.8pt>[rrdd] \POS+<9pt,-7pt>\ar@{-}@/_0.8pc/[rr]+<-11pt,-6pt> \POS+<0pt,-0.4pt>\ar@{-}@/_0.8pc/[rr]+<-11pt,-6.4pt> & b & \relax\POS+<-27pt,-9pt>*{\scriptstyle 2} \\
\ar[rr] \POS+<10pt,5pt>*{\scriptstyle -1} \POS+<-13pt,-24.9pt>*{\U_{x_0}} \POS+<7pt,0pt>\ar@{--}+<58pt,0pt> \POS+<-7pt,40.8pt>*{\U'_{x_0}} \POS+<7pt,0pt>\ar@{--}+<58pt,0pt> & \relax\POS+<-3pt,13pt>*{\scriptstyle 2'} \POS+<4.6pt,-27pt>*{\scriptstyle -2''} & a \POS+<-10pt,-8pt>*{\scriptstyle 1} \\
\ar@{-}@<-0.1pt>[rruu] \ar@{-}@<-0.5pt>[rruu] \POS+<9pt,4.2pt>\ar@{-}@/^0.8pc/[rr]+<-11pt,3.2pt> \POS+<0pt,-0.4pt>\ar@{-}@/^0.8pc/[rr]+<-11pt,2.8pt> \POS+<16pt,1pt>*{\scriptstyle -2} & \ar[uu] &  \\
} &
\xymatrix{
\ar@{-}@<-2.4pt>[rrdd] \ar@{-}@<-2.8pt>[rrdd] & b & \relax\POS+<-27pt,-14pt>*{\scriptstyle 2} \\
\ar@{-->}[rr] \POS+<-16pt,0pt>*{\U_{x_0},\U'_{x_0}} \POS+<28pt,7pt>*{\scriptstyle -1} && a \POS+<-13pt,-8pt>*{\scriptstyle 1} \\
\ar@{-}@<-0.1pt>[rruu] \ar@{-}@<-0.5pt>[rruu] \POS+<24pt,6pt>*{\scriptstyle -2} & \ar[uu] &  \\
} &
\xymatrix{
\ar@{-}@<-2.4pt>[rrdd] \ar@{-}@<-2.8pt>[rrdd]\POS+<9pt,-7pt>\ar@{-}@/_0.8pc/[rr]+<-11pt,-6pt> \POS+<0pt,-0.4pt>\ar@{-}@/_0.8pc/[rr]+<-11pt,-6.4pt> & b & \relax\POS+<-27pt,-9pt>*{\scriptstyle 2} \\
\ar[rr] \POS+<10pt,5pt>*{\scriptstyle -1} \POS+<-13pt,15.9pt>*{\U_{x_0}} \POS+<7pt,0pt>\ar@{--}+<58pt,0pt> \POS+<-7pt,-40.8pt>*{\U'_{x_0}} \POS+<7pt,0pt>\ar@{--}+<58pt,0pt> & \relax\POS+<-4pt,13pt>*{\scriptstyle 2''} \POS+<4.4pt,-27pt>*{\scriptstyle -2'} & a \POS+<-10pt,-8pt>*{\scriptstyle 1} \\
\ar@{-}@<-0.1pt>[rruu] \ar@{-}@<-0.5pt>[rruu] \POS+<9pt,4.2pt>\ar@{-}@/^0.8pc/[rr]+<-11pt,3.2pt> \POS+<0pt,-0.4pt>\ar@{-}@/^0.8pc/[rr]+<-11pt,2.8pt> \POS+<16pt,1pt>*{\scriptstyle -2} & \ar[uu] &  \\
} \\
\; c<0 & \;\qquad c=0 & \; c>0
\end{array}
$$
\end{lemma}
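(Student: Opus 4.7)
The plan is to reduce the lemma to a real sign analysis in a low-dimensional slice. First, the defining inequalities $b^2-a^2-c^2\neq 0$ and $a\neq\pm b$ of the principal stratum $T_{x_0}^{\pr}X$ have already been derived in the complex setting immediately before the statement; since both polynomials have real coefficients, the real locus $T_{x_0}^{\pr}X^{\mu}$ inside $T_{x_0}^*X^{\mu}\simeq(\h^{\perp})^{\sigma}=\s^{\sigma}\oplus(\s^{\perp})^{\sigma}$ is cut out by the same inequalities interpreted over $\RR$. This proves the first assertion.

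For the connected component count, I would factor the analysis through the $\RR$-linear map $\pi:(a,b,z)\mapsto(a,b,c(z))$. Both inequalities depend on $z$ only through $c(z)$, so their zero loci are pullbacks from $\RR^3$; since the fibres of $\pi$ are affine translates of $\ker(c)\subseteq(\s^{\perp})^{\sigma}$, they are connected, and the number of components of $T_{x_0}^{\pr}X^{\mu}$ equals that of the image of $\pi$ once the two inequalities are imposed.

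When $\s^{\perp}\not\subseteq\ttt_{\perp}$, the form $c$ is nonzero, the image of $\pi$ is all of $\RR^3$, and the counting proceeds as follows. The pair of planes $a=\pm b$ splits $\RR^3$ into four open sectors. In the two horizontal sectors $|a|>|b|$ the cone $b^2=a^2+c^2$ is avoided throughout (as $a^2+c^2\geq a^2>b^2$), contributing a single component each; in the two vertical sectors $|b|>|a|$ the cone enters and carves each sector into three pieces (the interior $|c|<\sqrt{b^2-a^2}$ and the two exteriors $c>\sqrt{b^2-a^2}$ and $c<-\sqrt{b^2-a^2}$), contributing three each. The total is $2+2\cdot 3=8$. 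When $\s^{\perp}\subseteq\ttt_{\perp}$, the form $c$ vanishes identically on $(\ttt_{\perp})^{\sigma}$; the image of $\pi$ degenerates to the $(a,b)$-plane, and the two inequalities merge into $a^2\neq b^2$, leaving just the four quadrants determined by the asymptote lines.

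The identification of the components with the labels $\pm 1,\pm 2,\pm 2',\pm 2''$ is then read off directly from the displayed pictures: $\pm 1$ correspond to the two horizontal sectors, while the remaining six labels are distributed over the three pieces of each vertical sector cut out by the cone, with primes distinguishing the conic interior from the outer subregions; as $c$ becomes identically zero the primed labels collapse onto their unprimed counterparts, giving the reduction from $8$ to $4$ components. No serious obstacle is anticipated: the argument is a transparent sign analysis, once the slice $(a,b,c)$ and the linear dependence $c=c(z)$ are in hand.
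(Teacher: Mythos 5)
Your proposal is correct and follows essentially the same route as the paper, which offers no separate argument beyond the preceding explicit description of $(\h^{\perp})^{\sigma}$ and the principal-stratum inequalities, leaving the component count to the pictures. Your reduction along the linear map $(a,b,z)\mapsto(a,b,c(z))$ with connected affine fibres, followed by the sector-by-sector count ($2+2\cdot3=8$, degenerating to $4$ when $c\equiv0$ on $\ttt_{\perp}$), is exactly the verification the paper leaves implicit; only note that when $c\equiv0$ the primed regions become empty rather than merging with the unprimed ones, which does not affect the count.
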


The fiber $\U_{x_0}$ of $\U_{\OO}$ at $x_0$ is distinguished by the condition $b=c$;
its location is indicated by some of the dashed lines in the above picture.
The other dashed lines indicate the locus $\U_{x_0}'=n_{\alpha}^{-1}\U_{n_{\alpha}x_0}$, where $\U_{n_{\alpha}x_0}$ is the fiber of $\U_{\OO'}$ at $n_{\alpha}x_0$; it is identified (via $\Phi$) with $(\h^{\perp}\cap\bb^-)^{\sigma}$ and distinguished by the condition $b=-c$.

\begin{lemma}
The locus $(\aaa^{\pr})^{\sigma}$ is distinguished by the open conditions $a\ne0$ and $a\neq\pm c$.
It decomposes into six connected components unless $\s^{\perp}\subseteq\ttt_{\perp}$,
in which case it consists of two components only.
The components are indicated by labels $\pm\mathrm{I}$, $\pm\mathrm{II}'$, $\pm\mathrm{II}''$ in the following picture.
$$\xymatrix{
\relax\POS+<3.5pt,-8pt> \ar@{-}@<-0.2pt>[rrdd]+<-5pt,4pt> \ar@{-}@<0.2pt>[rrdd]+<-5pt,4pt> \POS+<19pt,-6.6pt>*{\scriptstyle-\mathrm{II}''} & c & \relax\POS+<-25pt,-14pt>*{\scriptstyle\mathrm{II}''} \\
\ar[rr] \POS+<12pt,6pt>*{\scriptstyle-\mathrm{I}} && a \POS+<-12pt,7pt>*{\scriptstyle\mathrm{I}} \\
\relax\POS+<3.5pt,4pt> \ar@{-}@<-0.2pt>[rruu]+<-8pt,-8pt> \ar@{-}@<0.2pt>[rruu]+<-8pt,-8pt> \POS+<20pt,9.5pt>*{\scriptstyle-\mathrm{II}'} \POS+<12pt,-18pt> *{n_{\alpha}} \POS+<-12pt,10pt> \ar@{<->}+<20pt,0pt> & \ar@{-}@<-0.2pt>[uu]+<0pt,-8pt> \ar[uu] \ar@{-}@<0.2pt>[uu]+<0pt,-8pt> & \relax\POS+<-27pt,14pt>*{\scriptstyle\mathrm{II}'} \\
}$$
The action of $s_{\alpha}$ is indicated by the double-headed arrow.
\end{lemma}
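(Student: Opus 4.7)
The plan is to read off $(\aaa^{\pr})^\sigma$ directly from the defining conditions $a\ne0,\ a\ne\pm c(z)$ produced by the unnamed previous lemma, and then count connected components by projecting onto the $(a,c)$-plane. Since $T$ is split over $\RR$, the decomposition $\aaa^*\simeq\ttt_\alpha\oplus\ttt_\perp$, the linear map $c:\ttt_\perp\to\CC$, and all the conditions defining $\aaa^{\pr}$ are defined over $\RR$. Hence $(\aaa^{\pr})^\sigma$ is simply the set of real $(a,z)\in\RR\oplus\ttt_\perp^\sigma$ satisfying $a\ne0$ and $a\ne\pm c(z)$, where $c$ is now a real-valued $\RR$-linear functional on $\ttt_\perp^\sigma$.

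To count the components, I would consider the $\RR$-linear map $\pi:\aaa^\sigma\to\RR^2,\ (a,z)\mapsto(a,c(z))$. If $\s^\perp\not\subseteq\ttt_\perp$, then $c\not\equiv0$ on $\ttt_\perp^\sigma$, so $\pi$ is surjective with a nonzero linear kernel. The three lines $a=0,\ a=c,\ a=-c$ slice $\RR^2$ into six open convex sectors, and the preimage of each sector under $\pi$ is a linear fiber bundle over a convex base with connected (affine) fiber, hence connected. This produces the six components labeled $\pm\mathrm{I},\pm\mathrm{II}',\pm\mathrm{II}''$. In the remaining case $\s^\perp\subseteq\ttt_\perp$, the function $c$ vanishes identically on $\ttt_\perp^\sigma$; the conditions $a\ne\pm c$ collapse to $a\ne0$, and $(\aaa^{\pr})^\sigma$ splits into the two half-spaces $\{a>0\}\times\ttt_\perp^\sigma$ and $\{a<0\}\times\ttt_\perp^\sigma$, labeled $\pm\mathrm{I}$.

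For the $s_\alpha$-action, the Weyl group element acts on $\ttt^\sigma$ as the reflection across $\Ker\alpha\supset\ttt_\perp$, fixing $z$ and sending $a\to-a$. In $\pi$-coordinates this is the horizontal reflection $(a,c)\mapsto(-a,c)$, which pairs the sectors $\mathrm{I}\leftrightarrow-\mathrm{I}$, $\mathrm{II}'\leftrightarrow-\mathrm{II}'$, and $\mathrm{II}''\leftrightarrow-\mathrm{II}''$, as shown by the double-headed arrow in the picture. There is no serious obstacle: the only verification requiring a word of justification is that preimages under $\pi$ of open convex sectors are connected, which is immediate since such a preimage is a linear bundle over a convex base with connected fibers.
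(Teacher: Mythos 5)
Your proof is correct and follows essentially the route the paper intends: the paper states this lemma as an immediate consequence of the preceding description of $\aaa^{\pr}$ (it offers no separate proof), and your write-up just makes explicit the real-points reduction, the sector count in the $(a,c)$-plane via the projection $(a,z)\mapsto(a,c(z))$, and the fact that $s_{\alpha}$ fixes $\ttt_{\perp}$ and negates $a$. No gaps.
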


\begin{proof}[Proof of Proposition~\ref{prop:T2}]
Consider the covering $\widetilde{T}^{\pr}_{x_0}X^{\mu}\to T^{\pr}_{x_0}X^{\mu}$.
It is a two-fold covering: the fiber over $\psi\in T^{\pr}_{x_0}X^{\mu}$ consists of two points $(\psi,\zeta)$ such that the diagonal entries $\pm\tilde{a}$ of the $\ttt_{\alpha}$-part $\zeta_0$ of $\zeta$ are the eigenvalues of the $\sgl_2$-part $\xi_0$ of $\xi=\Phi(\psi)$.
To determine these eigenvalues, one may move $\xi_0$ to the locus $\{b=\pm c\}$ (i.e., to an upper or lower triangular shape) by a hyperbolic rotation and take for $\tilde{a}$ or $-\tilde{a}$ the abscissa of the resulting point.

The components $\pm2$ of $T^{\pr}_{x_0}X^{\mu}$ (in which $\xi_0$ has pure imaginary eigenvalues) do not belong to the image of $\widetilde{T}^{\pr}_{x_0}X^{\mu}$ and the preimage of any other component consists of two connected components of $\widetilde{T}^{\pr}_{x_0}X^{\mu}$ differing by the sign of $\tilde{a}$.
We denote them by pairs of labels, the first, resp.\ second, one being the label of the component of $T^{\pr}_{x_0}X^{\mu}$, resp.\ of $(\aaa^{\pr})^{\sigma}$, containing $\psi$, resp.~$\zeta$. There are twelve components $(\pm1,\pm\text{I})$, $(2',\pm\text{II}')$, $(2'',\pm\text{II}'')$, $(-2',\pm\text{II}'')$, $(-2'',\pm\text{II}')$, unless $\s^{\perp}\subseteq\ttt_{\perp}$, in which case there are only the four components $(\pm1,\pm\text{I})$.

Since $H^{\sigma}$ acts on $\widetilde{T}^{\pr}_{x_0}X^{\mu}$ via the connected Lie group of hyperbolic rotations,  $H^{\sigma}$ preserves the components of $\widetilde{T}^{\pr}_{x_0}X^{\mu}$, and their $G^{\sigma}$-spans are the \emph{$G^{\sigma}$-connected components} (i.e., minimal open and closed $G^{\sigma}$-stable subsets) of $\widetilde{T}^{\pr}X^{\mu}$ lying over $G^{\sigma}x_0=G^{\sigma}\OO=G^{\sigma}\OO'$.
We denote them by the same pairs of labels. The action of $s_{\alpha}$ permutes these $G^{\sigma}$-connected components by changing the sign of the second label.

The fiber $\widetilde{\U}_{x_0}$ in $\widetilde{\U}_{\OO}$ is the set of pairs $(\psi,\zeta)$
such that $b=c$ and $a=\tilde{a}$, with the above notation.
It follows that $\widetilde{\U}_{\OO}$ meets the components $(1,\text{I})$, $(-1,-\text{I})$, $(2'',\pm\text{II}'')$, and
$(-2'',\pm\text{II}')$.
Similarly, the set $\widetilde{\U}'_{x_0}=n_{\alpha}^{-1}\widetilde{\U}_{n_{\alpha}x_0}$ (where $\widetilde{\U}_{n_{\alpha}x_0}$ is the fiber of $n_{\alpha}x_0$ in $\widetilde{\U}_{\OO'}$) consists of $(\psi,\zeta)$
such that $b=-c$ and $a=-\tilde{a}$ (because the adjoint action of $n_{\alpha}$ preserves $b$ and changes the signs of $a$ and $c$).
It follows that $\widetilde{\U}_{\OO'}$ meets the other components of $\widetilde{T}^{\pr}X^{\mu}$ over $G^{\sigma}x_0$, namely the components $(1,-\text{I})$, $(-1,\text{I})$, $(2',\pm\text{II}')$, and $(-2',\pm\text{II}'')$ .

From the above discussion, we deduce that $G^{\sigma}\widetilde\U_{\OO}^{\pr}$ is the union of the $G^{\sigma}$-connected components
$(1,\text{I})$, $(-1,-\text{I})$, $(2'',\pm\text{II}'')$, and $(-2'',\pm\text{II}')$,
whereas $G^{\sigma}\widetilde\U_{\OO'}^{\pr}$ is the union of the components $(1,-\text{I})$, $(-1,\text{I})$, $(2',\pm\text{II}')$, and $(-2',\pm\text{II}'')$.
By Lemma~\ref{lem:T2}, the last four of the six components in each case are not present whenever $\s^{\perp}\subseteq\ttt_{\perp}$.
Furthermore, the components $(\pm1,\pm\text{I})$ move to a different cell but the other components remain in the same cell.
This concludes the proof of Proposition~\ref{prop:T2}.
\end{proof}

%%%%%%%%%%%%%%%%%%%%%%%%%%%%%%%%%%%%%%%%%%%%%%%%%%%%%%%%%%%%%%%%%
\subsection{}

Recall the definition of types for $B^\sigma$-orbits of $X(\RR)$; see Section~\ref{sec:min&refl}.

\begin{theorem}
\label{thm:real-Knop-action}
Suppose $X$ is a spherical homogeneous $G$-variety defined over $\RR$ with no
simple roots of type \textup{(T2)} w.r.t.\ real
Borel orbits in $\Bmax(X^{\mu})$, or a symmetric space (defined over $\RR$).
Then the bijection $\OO\mapsto G^{\sigma}\widetilde\U_{\OO}^{\pr}$
defined by the partition~\eqref{eqn:partition} is $\widetilde W$-equivariant.
In particular, the $\widetilde W$-action on the set $\Bmax(X^\mu)$ %the set of $B^{\sigma}$-orbits in $X^{\mu}$ contained in the $B$-or\-bits %of maximal rank in $X$
factors through $W$.

Moreover, the resulting action of $W$ on $\Bmax(X^\mu)$ agrees with the $W$-action on $\Bmax(X)$.
\end{theorem}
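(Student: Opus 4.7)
The plan is to transport the geometric $W$-action from the polarized cotangent bundle to $\Bmax(X^\mu)$ via the bijection $\OO \mapsto G^\sigma \widetilde\U_\OO^{\pr}$ of Proposition~\ref{prop:B-orb&comp-R}. Since the $W$-action on $\widetilde T^* X$ is defined by functoriality of the fibre product and automatically satisfies the braid relations, and since $\widetilde T^{\pr} X^\mu$ is $W$-stable because everything is defined over $\RR$, the first assertion of the theorem reduces to verifying that, for every simple root $\alpha$, the action of $s_\alpha$ on $\widetilde T^{\pr} X^\mu$ permutes the cells $G^\sigma \widetilde\U_\OO^{\pr}$ in exact accordance with the reflection operator of Section~\ref{subsec:min&refl-R}.

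The first step is to localize to semisimple rank one: applying Remark~\ref{rmk:rk-red-real} with $S = \{\alpha\}$ transfers the verification to a single $P_\alpha^\sigma$-span or, equivalently, to the Levi quotient $V = L_\alpha z$. The whole construction of $T^* X$, $\Phi$, the partition \eqref{eqn:partition}, and the $W$-action is compatible with this reduction since $\Ru{(P_\alpha)}^\sigma$ acts trivially on the polarizing coordinate $\zeta$. I would then run through the configurations (P), (U), (T0), (T1), (T2), (N0), (N1), (N2) of Section~\ref{subsec:min&refl-R}. Cases (P) and (U) transfer immediately from the complex statement of Corollary~\ref{cor:conn-comp}, as the real and complex Borel orbits match up locally. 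In cases (T0), (T1), (N0), (N1), there is a single open real cell over the $P_\alpha^\sigma$-span, which must be (and is) fixed by $s_\alpha$: a direct inspection of the rank-one models $PSO_2(\RR)$, $PSO_{1,1}(\RR)\times\langle r\rangle_2$, $PO_2(\RR)$, $PSO_{1,1}(\RR)\rtimes\langle n\rangle_2$ confirms this. Case (N2) is resolved because the nontrivial component $n\in R_x = PO_{1,1}(\RR)$ lifts to a representative of $s_\alpha$ in $N_G(T)^\sigma$, so the geometric swap of the two open cells is already internal to the model group.

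The main obstacle is type (T2), where $R_x = PSO_{1,1}(\RR)$ contains no such internal swap. Here Proposition~\ref{prop:T2} is decisive: it asserts that $s_\alpha$ interchanges $G^\sigma \widetilde\U_\OO^{\pr}$ and $G^\sigma \widetilde\U_{\OO'}^{\pr}$ if and only if $\s^\perp \subseteq \ttt_\perp$. Under the first hypothesis of the theorem, type (T2) is absent and there is nothing further to check. Under the symmetric-space hypothesis, I would derive $\s^\perp \subseteq \ttt_\perp$ from Proposition~\ref{prop:root-symm}(2): when $\alpha$ is a real root with respect to $\theta_x$, the direct-sum decomposition $(\llll_\alpha)_z = [\llll_\alpha,\llll_\alpha]^{\theta_x} \oplus (\ttt^\alpha)^{\theta_x}$ is precisely the statement that the auxiliary linear map $c\mapsto z_\perp(c)$ of Section~\ref{subsec:T2} vanishes identically, which is equivalent to $\s^\perp$ having no $\sgl_2$-component, i.e., $\s^\perp \subseteq \ttt_\perp$.

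With $\widetilde W$-equivariance established in all cases, the braid relations on $\widetilde T^{\pr} X^\mu$ force the $\widetilde W$-action on $\Bmax(X^\mu)$ to factor through $W$. The final assertion---agreement with the complex $W$-action on $\Bmax(X)$---is a direct consequence of the construction: each real cell $G^\sigma \widetilde\U_\OO^{\pr}$ is contained in the complex connected component $G\widetilde\U_{\OO_\CC}^{\pr}$ of Corollary~\ref{cor:conn-comp}, and both $W$-actions come from the same fibre-product over $\ttt^*/W$, making the projection $\Bmax(X^\mu)\to\Bmax(X)$ tautologically $W$-equivariant.
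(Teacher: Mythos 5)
Your overall skeleton coincides with the paper's: reduce to semisimple rank one via Remark~\ref{rmk:rk-red-real}, run through the configuration types of \S\ref{subsec:min&refl-R}, and let Proposition~\ref{prop:T2} together with the condition $\s^{\perp}\subseteq\ttt_{\perp}$ (derived in the symmetric case from Proposition~\ref{prop:root-symm}(2)) settle type (T2); your derivation of $\s^{\perp}\subseteq\ttt_{\perp}$ for symmetric spaces is essentially the one the paper uses. However, your treatment of type (N2) has a genuine gap. The action of $s_{\alpha}$ on $\widetilde{T}^{\pr}X^{\mu}$ moves only the polarization coordinate, $(\psi,\zeta)\mapsto(\psi,\9\zeta{s_{\alpha}})$; it is not induced by any element of $G$, so the existence of a lift of the nontrivial component of $R_x=PO_{1,1}(\RR)$ to $N_G(T)^{\sigma}$ (or to the stabilizer) does not by itself show that $s_{\alpha}$ interchanges $G^{\sigma}\widetilde\U_{\OO}^{\pr}$ and $G^{\sigma}\widetilde\U_{\OO'}^{\pr}$. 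Proposition~\ref{prop:T2} makes this point sharply: in type (T2) the two open orbits are always interchanged by the reflection operator, yet the geometric swap of cells \emph{fails} whenever $\s^{\perp}\not\subseteq\ttt_{\perp}$. Type (N2) therefore requires the same verification of $\s^{\perp}\subseteq\ttt_{\perp}$ as type (T2). The paper obtains it from the element $r\in H^{\sigma}$ projecting to the class of $\diag(-1,1)$ in $PO_{1,1}(\RR)$: its adjoint action reverses the sign of $c$ while fixing $z$ in the description $(\bot)$ of $\s^{\perp}$, forcing $c\equiv0$ on $\ttt_{\perp}$, after which one argues exactly as in Proposition~\ref{prop:T2}. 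Without this step your case (N2) is unproved.

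Two further places are asserted where an argument is needed, though both are repairable. In type (U) it is not enough that real and complex orbits "match up locally": $s_{\alpha}$ sends $G^{\sigma}\widetilde\U_{\OO}^{\pr}$ into the real locus of $G\widetilde\U_{\OO'_{\CC}}^{\pr}$, but one must identify \emph{which} cell over $G^{\sigma}\OO$ it lands on; the paper pins this down via Corollary~\ref{cor:oftheproof}, showing $n_{\alpha}(Z(\zeta)^{\mu}\cap\OO)\subset\OO'_{\CC}\cap G^{\sigma}\OO=\OO'$. And the rank-one reduction is where much of the technical work sits: one needs the conormal bundles $\V$ and $\N$ and the commutative diagrams showing that $\widetilde\U_{\OO_{\CC}}$ is the preimage of $\widetilde\U_{\Q_{\CC}}$ under a $W_{\alpha}$-equivariant map, and that the resulting correspondence of $G^{\sigma}$-connected components is $W_{\alpha}$-equivariant --- rather more than the observation that $\Ru{(P_{\alpha})}^{\sigma}$ acts trivially on $\zeta$. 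Your handling of the remaining types and of the final two assertions of the theorem is fine.
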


The proof of Theorem~\ref{thm:real-Knop-action} is postponed to the end of this section.

\begin{corollary}\label{cor:thm-real}
Suppose $X$ is a spherical homogeneous $G$-variety defined over $\RR$ %with no Borel orbits of type \textup{(T2)}.
and satisfying the assumption of Theorem~\ref{thm:real-Knop-action}.
Then the little Weyl group of $X$ acts on the set of open $B(\RR)$-orbits in $X(\RR)$
and the orbits of this action are in bijective correspondence with the $G(\RR)$-orbits in $X(\RR)$.
\end{corollary}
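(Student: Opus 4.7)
The plan is to combine Theorem~\ref{thm:real-Knop-action} with Proposition~\ref{prop:refl&G-orb}, paralleling the derivation of Corollary~\ref{cor:parametrization} from Theorem~\ref{thm:main-little-Weyl}.

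First I would note that $\Bopen(X^{\mu})\subseteq\Bmax(X^{\mu})$: every open $B^{\sigma}$-orbit lies in $\Omax^{\mu}$ and hence has maximal rank. Under the hypothesis, Theorem~\ref{thm:real-Knop-action} provides a $W$-action on $\Bmax(X^{\mu})$ that agrees, via $\OO\mapsto\OO_{\CC}$, with Knop's $W$-action on $\Bmax(X)$. Since the stabilizer of $\Omax$ under the latter is $W_X\ltimes W_L$ (Theorem~\ref{thm:B-orb&comp-C}), the little Weyl group $W_X$ preserves the preimage of $\Omax$, which is precisely $\Bopen(X^{\mu})$. This yields the claimed $W_X$-action on $\Bopen(X^{\mu})$.

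For the bijection, every $G^{\sigma}$-orbit of $X^{\mu}$ meets $\Omax^{\mu}$ in a finite nonempty union of open $B^{\sigma}$-orbits by the first proposition of Section~\ref{sec:open-Borel-orb}, so $G^{\sigma}$-orbits on $X^{\mu}$ correspond bijectively to $G^{\sigma}$-equivalence classes of $\Bopen(X^{\mu})$. By Proposition~\ref{prop:refl&G-orb}, this equivalence is generated by the simple reflection operators of types (T) and (N) with respect to $\Omax$; by Proposition~\ref{prop:types-spherroots} and the corollary to Theorem~\ref{thm:brion-little-Weyl}, these are exactly the simple reflections generating $\vlW_X\subseteq W_X$. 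The $\vlW_X$-orbits on $\Bopen(X^{\mu})$ therefore match the $G^{\sigma}$-equivalence classes, and the same then holds for the $W_X$-orbits.

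The main obstacle I foresee is comparing the $W_X$- and $\vlW_X$-orbit structures on $\Bopen(X^{\mu})$, since a priori elements of $W_X\setminus\vlW_X$ might produce coarser identifications. This is ruled out because Theorem~\ref{thm:real-Knop-action} realizes the $W$-action through the simple reflection operators of Section~\ref{sec:min&refl}, so any identification within $\Bopen(X^{\mu})$ factors through simple reflections that preserve $\Bopen(X^{\mu})$, namely those lying in $W_X$; by Brion's description these are precisely the generators of $\vlW_X$. Hence $W_X$ and $\vlW_X$ induce the same partition of $\Bopen(X^{\mu})$, and the stated bijection follows.
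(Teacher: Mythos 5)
Your overall route is the one the paper intends (its own proof is just a citation of Theorem~\ref{thm:real-Knop-action}, Theorem~\ref{thm:B-orb&comp-C} and \S\ref{subsec:min&refl-R}), and the first two steps are correct: $\Bopen(X^{\mu})\subseteq\Bmax(X^{\mu})$, the $W$-action from Theorem~\ref{thm:real-Knop-action} is compatible with Knop's action on $\Bmax(X)$, the stabilizer of $\Omax$ is $W_X\ltimes W_L$, hence $W_X$ preserves $\Bopen(X^{\mu})$; and Proposition~\ref{prop:refl&G-orb} identifies the $G^{\sigma}$-equivalence classes of $\Bopen(X^{\mu})$ with the $\vlW_X$-orbits.

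The flaw is in your final paragraph. The claim that ``any identification within $\Bopen(X^{\mu})$ factors through simple reflections that preserve $\Bopen(X^{\mu})$'' is false: an element $w\in W_X$ is in general a product of simple reflections that individually do \emph{not} lie in $W_X$, and the intermediate orbits $\9{\OO}{s_{\alpha_k}}, \9{\OO}{s_{\alpha_{k-1}}s_{\alpha_k}},\dots$ typically leave $\Bopen(X^{\mu})$ (e.g.\ a type (U) reflection sends an open orbit to a non-open one; see the generators $r_{\gamma_i}=s_{\alpha_i}s_{\alpha_{i+1}}s_{\alpha_i}$ in \S\ref{ex:ord.pairs}, whose action on $\OO$ passes through the non-open orbit $\OO_i$). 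So you cannot conclude from this that $W_X$ acts through $\vlW_X$. The correct, and simpler, argument is that by the very definition of the reflection operators in \S\ref{subsec:min&refl-R}, each $s_{\alpha}$ maps a $B^{\sigma}$-orbit to a $B^{\sigma}$-orbit inside the same $P_{\alpha}^{\sigma}$-span, hence inside the same $G^{\sigma}$-orbit of $X^{\mu}$; therefore every $w\in W$ (a fortiori every $w\in W_X$) preserves the $G^{\sigma}$-orbit containing a given real Borel orbit. Consequently each $W_X$-orbit in $\Bopen(X^{\mu})$ is contained in a $G^{\sigma}$-equivalence class, while each such class is a single $\vlW_X$-orbit by Proposition~\ref{prop:refl&G-orb} and hence is contained in a $W_X$-orbit since $\vlW_X\subseteq W_X$. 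The two partitions therefore coincide, and combined with the fact that every $G^{\sigma}$-orbit meets $\Omax^{\mu}$, this gives the stated bijection.
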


\begin{proof}
This follows readily from Theorem~\ref{thm:real-Knop-action}, Theorem~\ref{thm:B-orb&comp-C}, and \S\ref{subsec:min&refl-R}.
\end{proof}

\begin{remark}\label{rmk:N_X-act-real}
As in the complex case, by Corollary~\ref{cor:oftheproof} we may define an ``action'' of $N_X^{\sigma}$ on $Z^{\mu}$ for $Z=Z(\zeta)$ ($\zeta\in(\aaa^{\pr})^{\sigma}$) by putting $n\star x=x'\in Z^{\mu}$ such that $ux'=nx$ for a unique $u\in\Ru{P}^{\sigma}$. By Theorem~\ref{thm:real-Knop-action}, if $x\in\OO\in\Bopen(X^{\mu})$, then $n\star x\in\9\OO{w}$, where $w\in W_X$ is represented by $n\in N_X^{\sigma}$. It follows from Corollary~\ref{cor:thm-real} and \S\ref{sec:open-Borel-orb} that this $\star$ operation descends to an action of $W_X$ on $Z^{\mu}/T^{\sigma}$.
\end{remark}

\begin{question}\label{que:N_X-act-real}
Does $\star$ define a true action of $N_X^{\sigma}$ on $Z^{\mu}$?
\end{question}

For symmetric spaces, the answer is affirmative, see \S\ref{ex:symm}.

\begin{proof}[Proof of Theorem~\ref{thm:real-Knop-action}]
We adapt the approach conducted by Knop in the proof of Lemma 6.6 in~\cite{knop:action} to reduce the problem to the case of groups of semisimple rank $1$.

Recall Remark~\ref{rmk:rk-reduction} and the notation set up therein with $S=\{\alpha\}$.
Take $\OO\in\Bmax(X^\mu)$ and let $Y=P_{\alpha}\OO_{\CC}$. The %variety $Y$ admits a
geometric quotient $V=Y/\Ru{(P_{\alpha})}$ is %$\pi : Y\rightarrow V:=Y/\Ru{(P_{\alpha})}$ with $V$ being
a spherical homogeneous $L_{\alpha}$-variety and,
as already observed, the $B$-orbits in $Y$ are the preimages under the quotient map $\pi:Y\to V$ of the $B_\alpha$-orbits in $V$; this correspondence preserves the configuration type in the sense of~\S\ref{subsec:min&refl-C}. Furthermore, the rank of the orbits under consideration is also preserved.

Moreover, by Remark~\ref{rmk:rk-red-real}, $P_{\alpha}^{\sigma}\OO$ is open and closed in $Y^{\mu}$ and $\OO$ is the preimage under $\pi|_{Y^{\mu}}$ of a $B_{\alpha}^{\sigma}$-orbit $\Q\subset V^{\mu}$.
%Indeed, such a preimage is a single $B^{\sigma}$-orbit, because $\Ru{(P_{\alpha})}^{\sigma}$ acts transitively on the real loci of the fibers, which follows e.g.\ from the vanishing of the Galois cohomology of unipotent groups, see \cite[I.5.4, Cor.\,1 and III.2.1, Prop.\,6]{Galois}.
Again, the correspondence between the $B^{\sigma}$-orbits in $P_{\alpha}^{\sigma}\OO$ and the $B_{\alpha}^{\sigma}$-orbits in $L_{\alpha}^{\sigma}\Q$ preserves the configuration types in the sense of~\S\ref{subsec:min&refl-R}.

Let $\V\subset T^*X$ denote the conormal bundle of the foliation of the $\Ru{(P_{\alpha})}$-or\-bits in $Y$ and $\N\subset\V$ be the conormal bundle of $Y$.
Then $\V/\N\subseteq T^*Y$ is the pullback of $T^*V$ along the quotient map $\pi:Y\to V$ and $\V/\N\to T^*V$ is the quotient by $\Ru{(P_{\alpha})}$. We get a commutative diagram
$$\xymatrix{
Y \ar[d]_{\pi} & \V \ar[l] \ar[d] \ar[rr]^{\text{momentum}} && **[r] \Ru{(\p_{\alpha})}^{\perp}\simeq\p_{\alpha} \ar[d]^{\text{projection}} \ar[r] & **[r] \g^*\quot{G}\simeq\ttt^*/W \\
V        & T^*V \ar[l]      \ar[rr]^{\text{momentum}} && **[r] \llll_{\alpha}^*\simeq\llll_{\alpha} \ar[r] & **[r] \llll_{\alpha}^*\quot{L_{\alpha}}\simeq\ttt^*/W_{\alpha}, \ar@<-1ex>[u]
}$$
where $W_{\alpha}=\{1,s_{\alpha}\}$ is the Weyl group of $L_{\alpha}$. Note that $\U_{\OO_{\CC}}$ is the preimage of $\U_{\Q_{\CC}}\subseteq T^*V$ in $\V$ and there is a commutative diagram
$$\xymatrix{
\U_{\OO_{\CC}} \ar[dd] \ar[rr]^{\text{momentum}} && **[r] \bb=\uu\oplus\ttt \ar[dd]^{\text{projection}} \ar[rrrr] \POS+RD \ar[rrd]_-{\text{projection}} &&&& \ttt^*/W\hphantom{{}_{\alpha}.} \\
&&&& **[r] \ttt\simeq\ttt^* \POS[]="pos"+RU \ar[rru] \POS"pos"+RD \ar[rrd] && \\
\U_{\Q_{\CC}}  \ar[rr]^{\text{momentum}} && **[r] \bb_{\alpha}=\uu_{\alpha}\oplus\ttt \ar[rrrr] \POS+RU \ar[rru]^-{\text{projection}} &&&& \ttt^*/W_{\alpha}. \ar@<1ex>[uu]
}$$

The real locus $\V^{\mu}$ is the conormal bundle of the foliation of $\Ru{(P_{\alpha})}^{\sigma}$-orbits in $Y^{\mu}$ and $\U_{\OO}$ is the preimage of $\U_{\Q}$ in $\V^{\mu}$.

It follows from the commutative diagram right above that $\widetilde\U_{\OO_{\CC}}$ is the preimage of $\widetilde\U_{\Q_{\CC}}$ under the natural $W_\alpha$-equivariant map
$$\widetilde{T}^*X\supset\V\mathbin{\underset{\ttt^*/W_{\alpha}}\times}\ttt^*\longrightarrow T^*V\mathbin{\underset{\ttt^*/W_{\alpha}}\times}\ttt^*=\widetilde{T}^*V.$$
The induced bijective map between the sets of irreducible components of these fiber products, which are the closures of the $P_{\alpha}\widetilde\U_{\OO_{\CC}}$'s ($\OO_\CC\in\Bmax(Y)$) and of the $L_{\alpha}\widetilde\U_{\Q_{\CC}}$'s ($\Q_\CC\in\Bmax(V)$), respectively, is $W_\alpha$-equivariant, too. Consequently, the $W_\alpha$-actions on the set of the $G\widetilde\U_{\OO_{\CC}}^{\pr}$'s and on the set of the $L_{\alpha}\widetilde\U_{\Q_{\CC}}^{\pr}$'s match.

Passing to the real loci, we readily see that the %$W_\alpha$-actions on
sets of the $G^\sigma$-connected components of the $G^\sigma\widetilde\U_{\OO}^{\pr}$'s ($\OO\in\Bmax(Y^\mu)$) and %on the set
of the $L_{\alpha}^\sigma$-connected components of the $L_{\alpha}^{\sigma}\widetilde\U_{\Q}^{\pr}$'s ($\Q\in\Bmax(V^\mu)$) are in bijective correspondence and the $W_\alpha$-actions on these two sets
match. This together with the fact that the natural bijective map between the sets $\B(Y^\mu)$ and $\B(V^\mu)$ is $W_\alpha$-equivariant  yields the announced reduction to the semisimple rank $1$.

For the rest of the proof, except for the last paragraph, let $G=P_{\alpha}=L_{\alpha}$ be of semisimple rank $1$. Consider the possible configurations of $B^{\sigma}$-orbits in $P_{\alpha}^{\sigma}\OO$.
We may assume that $\OO_{\CC}=\Omax$, i.e., $\OO$ is an open $B^{\sigma}$-orbit in $X^{\mu}$.

%\medskip\noindent
\subsubsection*{Types \textup{(P)}, \textup{(T0)}, \textup{(T1)}, \textup{(N0)}, and \textup{(N1)}}
Here $\OO$, $\OO_{\CC}$ and $G\widetilde\U_{\OO_{\CC}}^{\pr}$ are preserved by $s_{\alpha}$.
As for $G^{\sigma}\widetilde\U_{\OO}^{\pr}$, $s_{\alpha}$ maps it to an open and closed subset of the real locus of $G\widetilde\U_{\OO_{\CC}}^{\pr}$ which lies over $G^{\sigma}\OO$, i.e., to $G^{\sigma}\widetilde\U_{\OO}^{\pr}$ itself, because $\OO$ is the only $B^{\sigma}$-orbit both in $G^{\sigma}\OO$ and in $\Bmax(X^\mu)$.

%\medskip\noindent
\subsubsection*{Type \textup{(U)}}
Here $G^{\sigma}\OO=\OO\cup\OO'$ with $\OO'$ being a $B^{\sigma}$-orbit of codimension~1 and $s_{\alpha}\OO=\OO'$.
Moreover, $X=\OO_{\CC}\cup\OO'_{\CC}$ and $s_{\alpha}\OO_\CC=\OO'_\CC$.
The two connected components $G\widetilde\U_{\OO_{\CC}}^{\pr}$ and $G\widetilde\U_{\OO'_{\CC}}^{\pr}$ of $\smash[t]{\widetilde{T}^{\pr}X}$ are interchanged by $s_{\alpha}$. Recall that $G\smash[t]{\widetilde\U_{\OO_{\CC}}^{\pr}}=G\smash[t]{\widetilde\SSS}$ and $\smash[t]{\widetilde\SSS(\zeta)}$ projects onto $Z(\zeta)$ for any $\zeta\in\aaa^{\pr}$. (In fact, it is easy to see that all slices $Z(\zeta)$ are the same in this case.)

Also, $G^{\sigma}\widetilde\U_{\OO}^{\pr}=G^{\sigma}(\widetilde\SSS^{\mu}\cap\widetilde\U_{\OO})$
hence $s_\alpha$ maps $G^{\sigma}\widetilde\U_{\OO}^{\pr}$ to $G^{\sigma}n_{\alpha}\9{(\widetilde\SSS^{\mu}\cap\widetilde\U_{\OO})}{s_{\alpha}}$, with $n_{\alpha}\in N_G(T)^{\sigma}$ a representative of $s_{\alpha}$.
Since $\widetilde\SSS(\zeta)^{\mu}\cap\widetilde\U_{\OO}$ projects onto $Z(\zeta)^{\mu}\cap\OO$ for any $\zeta\in(\aaa^{\pr})^{\sigma}$, $n_{\alpha}\9{(\widetilde\SSS(\zeta)^{\mu}\cap\widetilde\U_{\OO})}{s_{\alpha}}$ projects onto $n_{\alpha}(Z(\zeta)^{\mu}\cap\OO)$.
By Corollary~\ref{cor:oftheproof}, $n_{\alpha}(Z(\zeta)^{\mu}\cap\OO)\subset\OO'_{\CC}$ and in turn
$n_{\alpha}(Z(\zeta)^{\mu}\cap\OO)\subset\OO'_\CC\cap G^{\sigma}\OO=\OO'$.
It follows that $n_{\alpha}\9{(\widetilde\SSS^{\mu}\cap\widetilde\U_{\OO})}{s_{\alpha}}\subset\widetilde\U_{\OO'}$ hence $s_{\alpha}$ sends $G^{\sigma}\widetilde\U_{\OO}^{\pr}$ to $G^{\sigma}\widetilde\U_{\OO'}^{\pr}$.

%\medskip\noindent
\subsubsection*{Type \textup{(N2)}}
Here $G^{\sigma}\OO$ contains two open $B^{\sigma}$-orbits $\OO$ and $\OO'$ interchanged by $s_{\alpha}$ and both lying in $\Omax$.
Besides, $\Omax$ is preserved by $s_{\alpha}$ and so is $G\widetilde\U_{\Omax}^{\pr}$.
The setting is similar to type (T2) in \S\ref{subsec:T2} except that $H^{\sigma}$ projects onto $PO_{1,1}(\RR)$ instead.
In particular, the (co)adjoint action of an element $r\in H^{\sigma}$ with image
$$
\begin{bmatrix}
 -1 & 0 \\
  0 & 1 \\
\end{bmatrix}
$$
in $PO_{1,1}(\RR)$ preserves $\h,\h^{\perp},\s,\s^{\perp}$ and changes the sign of $c$ leaving $z$ fixed in ($\perp$). It follows that $c=0$ identically on $\ttt_{\perp}$.
So, %by following the lines of
arguing as in the proof of Proposition~\ref{prop:T2},
we get that $s_{\alpha}$ sends $G^{\sigma}\widetilde\U_{\OO}^{\pr}$ to $G^{\sigma}\widetilde\U_{\OO'}^{\pr}$. %from the inclusion $n_{\alpha}(Z(\zeta)^{\mu}\cap\OO)\subset\OO'$ for $\zeta\in(\aaa^{\pr})^{\sigma}$.

\bigskip\noindent
\emph{Type} (T2) is %out of
excluded from our consideration %by hypothesis, we can conclude the proof.
unless $X=G/H$ is a symmetric space. In the latter case, recall the notation set up in~\S\ref{subsec:symm}.
The momentum map identifies $T_{x_0}^*X$ with
$$
\h^{\perp}=\ttt_1\oplus\bigoplus_{\alpha\ne\theta(\alpha)}\h^{\perp}_{\alpha,\theta(\alpha)},
$$
where $\h^{\perp}_{\alpha,\theta(\alpha)}=\langle e_{\alpha}-e_{\theta(\alpha)}\rangle$ is ``antidiagonally'' embedded in $\g_{\alpha}\oplus\g_{\theta(\alpha)}$. In particular, $\aaa=\ttt_1\subset\h^{\perp}$.
Hence $Z=Tx_0=Z(\zeta)$ is a flat for any $\zeta\in\aaa^{\pr}$ and $nZ\subset\9{\Omax}w$ for any $w\in W$ represented by $n\in N_G(T)$. Therefore each orbit $\OO\in\Bmax(X^{\mu})$ contains a point $x=nx_0$ and, for a simple root $\alpha$ of type (T2) w.r.t.\ $\OO$, $L_{\alpha}$ is stable under $\theta_x$ and $V=P_{\alpha}x/\Ru{(P_{\alpha})}$ is a symmetric space for $L_{\alpha}$, by Proposition~\ref{prop:root-symm}. Thus we may again assume that the acting group $G$ is of semisimple rank~1 and $H$ is a symmetric subgroup of $G$, in which case $\s^\perp\subset\ttt_\perp$, in the notation of \S\ref{subsec:T2}.
%The corollary thus follows from Corollary~\ref{cor:thm-real} together with
We conclude the proof by Proposition~\ref{prop:T2}.
\end{proof}

\section{Examples}
\label{sec:examples}

\subsection{Quadratic forms}
\label{ex:quad}

Quadratic forms of fixed rank $r$ in $n$ variables form a homogeneous variety $X$ for the group $G=GL_n$. For the base point $x_0$ we may take the standard quadratic form $z_1^2+\dots+z_r^2$ of rank $r$ in the variables $z_1,\dots,z_n$. Its stabilizer $H$ has a Levi decomposition $H=K\ltimes\Ru{H}$ with $K=O_r\times GL_{n-r}$ and $\Ru{H}$ being the Abelian group of unipotent linear transformations acting on the variables as follows: $z_i\mapsto z_i$ for $i\le r$ and $z_j\mapsto z_j+\sum_{i\le r}c_{ji}z_i$ for $j>r$. The space of quadratic forms $X$ is equipped with the natural real structure given by complex conjugation of the coefficients of a quadratic form. This real structure is compatible with the natural split real structure on the acting group $GL_n$.

Let $B\subset GL_n$ be the standard Borel subgroup of upper triangular matrices and $T\subset B$ be the standard diagonal torus. Denote by $\eps_1,\dots,\eps_n$ the eigenweights of the standard basis vectors in $\CC^n$ (i.e., the diagonal entries of $T$). The simple roots are $\alpha_i=\eps_i-\eps_{i+1}$ ($i=1,\dots,n-1$).

It is easy to see that $Bx_0=\Omax$ is open in $X$ and therefore $X$ is a spherical homogeneous space. Precisely, $\Omax$ is given by the inequalities $\Delta_1\ne0$, \dots, $\Delta_r\ne0$, where $\Delta_k$ denotes the $k$-th upper left corner minor of the matrix of a quadratic form. Each $\Delta_k$ is an eigenfunction for $B$ with eigenweight $2\omega_k$, where $\omega_k=\eps_1+\dots+\eps_k$, and $\Delta_1,\dots,\Delta_r$ generate the multiplicative group $\CC(X)^{\times}/\CC^{\times}$.

The slice $Z=Tx_0$ consists of the quadratic forms $a_1z_1^2+\dots+a_rz_r^2$ with $a_1\ne0$, \dots, $a_r\ne0$. Moving a point in $\Omax$ to $Z$ by the action of $U$ is just the Gram--Schmidt orthogonalization. Note that $\aaa$ is identified with the diagonal toral subalgebra in $\gl_r$ embedded in $\gl_n$ at the upper left corner and therefore $\aaa\subset\h^{\perp}$, whence $Z=Z(\zeta)$for any $\zeta\in\aaa^{\pr}$.

Since $X$ is parabolically induced from the symmetric space $Y=GL_r/O_r$, we get $\Sigma_X=\Sigma_Y=\{2\alpha_1,\dots,2\alpha_r\}$ (see e.g. \cite[Prop.\,20.13 and Thm.\,26.25]{tim}).

It is an exercise in linear algebra to show that the Borel orbits on $X$ are distinguished by the ranks of the upper left corner submatrices in the matrix of a quadratic form and may be represented by quadratic forms of the shape $\sum_ia_iz_i^2+\sum_{j<k}a_{jk}z_jz_k$ with $a_i\ne0$, $a_{jk}\ne0$, where the summation is over a set of $r$ pairwise distinct numbers $i,j,k$. By means of the action of $T$ one can put such a quadratic form to the canonical shape with all $a_i$ and $a_{jk}$ equal to $1$. Thus the set $\B(X)$ may be parameterized by combinatorial patterns of the form
$$\text{\Huge{$\strut$}}\xymatrix@!0{
\underset{1}{0} & \bullet &
*=0{\smash[b]{\underset{i}{\bullet}}} & 0 & *=0{\bullet} \ar@{-}@/^3ex/[rrrr] & \bullet & *=0{\smash[b]{\underset{j}{\bullet}}} \ar@{-}@/^3ex/[rrr] & 0 & *=0{\bullet} & *=0{\smash[b]{\underset{k}{\bullet}}} & \bullet &
*=0{\bullet} \ar@{-}@/^2ex/[r] & *=0{\smash[b]{\underset{n}{\bullet}}}
}$$
where the dots indicate the positions of $i,j,k$ in the range of $1,\dots,n$, zeroes indicate the other positions, and the arcs connect the positions $j$ and $k$ whenever $a_{jk}\ne0$. On the real locus, the signs of $a_i$ are preserved by the action of $T^{\sigma}$, so that $a_i=\pm1$, $a_{jk}=1$ in the canonical form, and $\B(X^{\mu})$ is parameterized by the \emph{signed patterns} of the form
$$\text{\Huge{$\strut$}}\xymatrix@!0{
0 & {+} & {-} & 0 & *=0{\bullet} \ar@{-}@/^3ex/[rrrr] & {+} & *=0{\bullet} \ar@{-}@/^3ex/[rrr] & 0 & *=0{\bullet} & *=0{\bullet} & {-} & *=0{\bullet} \ar@{-}@/^2ex/[r] & *=0{\bullet}
}$$

The types of simple roots with respect to $B$- or $B^{\sigma}$-orbits can be described as follows.

For a simple root $\alpha=\alpha_i$, the minimal parabolic subgroup $P_{\alpha}$ consists of the matrices with zeroes below the diagonal, except for the entry $(i+1,i)$. The Levi subgroup $L_{\alpha}$ acts by arbitrary linear transformations of the variables $z_i,z_{i+1}$ multiplying other variables by scalars.

For any $B^{\sigma}$-orbit $\OO$ corresponding to a signed pattern as above and represented by a canonical form $x$, the type of $\alpha$ with respect to $\OO$ or $\OO_{\CC}=Bx$ is determined by the group $R_x$ which is the image of $(P_{\alpha})^{\sigma}_x$ in $PGL_2(\RR)$ viewed as a group of projective linear transformations of $z_i,z_{i+1}$. The following table (in which we use the notation of \S\ref{subsec:min&refl-R}) describes the possible types depending on the entries in the pattern at the positions $i,i+1$:
$$
\renewcommand{\arraystretch}{1.2}
\begin{array}{|c|c|c|c|}
\hline
\text{Pattern} & R_x & \text{Type} & \text{Orbit} \\
\hline
{+}\;{+} \quad \text{or} \quad {-}\;{-} & PO_2(\RR) & \text{(N0)} & \text{single} \\
\hline
{+}\;{-} \quad \text{or} \quad {-}\;{+} & PO_{1,1}(\RR) & \text{(N2)} & \text{open} \\
\hline
\text{\Huge{$\strut$}}\xymatrix@!0{
*=0{\bullet} \ar@{-}@/^2ex/[r] & *=0{\bullet}
} & T(PSL_2(\RR))\rtimes\langle r\rangle_2 & \text{(N2)} & \text{closed} \\
\hline
0\;0 & PGL_2(\RR) & \text{(P)} & \text{single} \\
\hline
\text{other} & B(PGL_2(\RR)) \text{ or } B^-(PGL_2(\RR)) & \text{(U)} & \text{closed or open} \\
\hline
\end{array}
$$
The last column indicates the location of $\OO$ in $P_{\alpha}^{\sigma}\OO$ or of $\OO_{\CC}$ in $P_{\alpha}^{\sigma}\OO_{\CC}$. In all cases, $\9\OO{s_{\alpha}}=B^{\sigma}n_{\alpha}x$ and $\9{\OO_{\CC}}{s_{\alpha}}=Bn_{\alpha}x$, where $n_{\alpha}\in[L_{\alpha},L_{\alpha}]$ acts as $z_1\mapsto-z_2$, $z_2\mapsto z_1$.

It follows that the Weyl group $W=S_n$ acts on $\B(X)$ and $\B(X^{\mu})$ by permutations of entries in the patterns. In particular, Question~\ref{que:W-act-real} gets resolved in this case.

From the above discussion, it is also clear that the Borel orbits of maximal rank are characterized by the condition that the respective patterns contain no arcs and open $B^{\sigma}$-orbits are given by the $r$-tuples of signs $\pm$ (followed by $n-r$ zeroes). The (very) little Weyl group $W_X=\vlW_X=S_r$ acts on these tuples by permutations and the orbits for this action are parameterized by numbers of pluses and minuses, i.e., by the inertia indices of quadratic forms. Thus Corollaries \ref{cor:parametrization} and \ref{cor:thm-real} in this case are nothing but the law of inertia. Questions \ref{que:N_X-action} and \ref{que:N_X-act-real} are answered affirmatively in this example, because $N_X$ preserves $Z$ and acts on it by permuting the coefficients $a_i$ ($i=1,\dots,r$) and multiplying them by nonzero squares.

\subsection{Symmetric spaces}
\label{ex:symm}

Let $X=G/H$ be a symmetric space defined over $\RR$.  We shall use the notation of \S\ref{subsec:symm}. In this notation, the involutions $\theta$ and $\sigma$ commute since $\h=\g^{\theta}$ and $\h^{\perp}=\g^{-\theta}$ are preserved by $\sigma$. (Here $\g^{\pm\theta}$ are the $(\pm1)$-eigenspaces of $\theta$ in $\g$.) We may choose the tori $T_0,T_1$ to be defined and split over $\RR$ (see e.g.\ \cite[2.1]{CFT}).

The base point $x_0$ lies in $\Omax^{\mu}$ and $Z:=Tx_0=T_1x_0$ is a slice defined over $\RR$. Note that $Z=Z(\zeta)$, $\forall\zeta\in\aaa^{\pr}$, because $\aaa=\ttt_1\subset\h^{\perp}=\Phi(T^*_{x_0}X)$. The little Weyl group is $$W_X=W(\aaa)=N_G(T_1)/L=N_H(T_1)/L_0,$$ where $L=Z_G(T_1)=T_1\cdot L_0$ and $L_0=L\cap H$ (see e.g.\ \cite[Prop.\,26.19 and Cor.\,26.26]{tim}). In particular, we have $$N_X=L\cdot N_H(T_1)=T_1\cdot N_H(T_1).$$

Consider the following subgroup of $G$:
$$N_0=\{n\in N_H(T_1)\mid n\cdot\sigma(n)^{-1}\in T_1\cap H\}$$
(cf.\ \cite[4.1]{CFT}). By decomposing any $n\in N_X$ as $n=tn_0$, $t\in T_1$, $n_0\in N_H(T_1)$, it is straightforward to check that $n\in N_X^{\sigma}$ if and only if
$$n_0\cdot\sigma(n_0)^{-1}=t^{-1}\cdot\sigma(t)\in T_1\cap H,$$
i.e., $n_0\in N_0$. Conversely, for any $n_0\in N_0$ one finds $t\in T_1$ such that the above equality holds (by taking e.g.\ a square root of $\sigma(n_0)n_0^{-1}$ in $T_1$), whence $n=tn_0\in N_X^{\sigma}$. This $t=t(n_0)$ is uniquely defined modulo $T_1^{\sigma}$ and the map $N_0\to T_1/T_1^{\sigma}$ given by $n_0\mapsto t(n_0)T_1^{\sigma}$ is a cocycle, i.e., $t(n_1n_2)=t(n_1)\cdot\9{t(n_2)}{n_1}\cdot T_1^{\sigma}$, $\forall n_1,n_2\in N_0$, where $\9{}n$ stands for conjugation by $n$.

Clearly, $Z^{\mu}$ is preserved by $N_X^{\sigma}$ and, by Corollary~\ref{cor:thm-real} and Remark~\ref{rmk:N_X-act-real}, the $G^{\sigma}$-orbits intersect $Z^{\mu}$ in $N_X^{\sigma}$-orbits. The latter orbits correspond bijectively to the orbits of the action of $N_0$ on $Z^{\mu}/T_1^{\sigma}\simeq\0Z2/\0{T_1}2$ given by $x\mapsto t(n_0)n_0x$ ($x\in Z^{\mu}$, $n_0\in N_0$). The resulting bijective correspondence between the $G^{\sigma}$-orbits on $X^{\mu}$ and the $N_0$-orbits on $\0Z2/\0{T_1}2$ is the main result of \cite{CFT} (Theorem~4.6), which was obtained in loc.~cit.\ by using the Galois cohomology technique.

\subsection{}
\label{ex:ord.pairs}

Consider the homogeneous space $X=G/H$, where $G=\Spin_{2n+1}$ and $H=SL_n\cdot\CC^{\times}$ is the Levi part of the maximal parabolic subgroup of $G$ corresponding to the short simple root. A model for $X$ is the space of pairs $(V,V')$ of transversal maximal isotropic subspaces in the quadratic space $\CC^{2n+1}$. We assume that the inner product on $\CC^{2n+1}$ in the standard basis $(e_1,\dots,e_n,e_0,e_{-n},\dots,e_{-1})$ is given by $(e_i,e_{-i})=1$ ($i=0,\pm1,\dots,\pm n$) and $(e_i,e_j)=0$ otherwise.

The standard real structure on $\CC^{2n+1}$ (given by complex conjugation of coordinates in the above basis) is compatible with the inner product and induces a split real structure on $SO_{2n+1}$ (given by complex conjugation of matrix entries), which lifts to a split real structure on $G$, and a $G$-equivariant real structure on $X$ given by $\mu(V,V')=(\overline{V},\overline{V'})$, where the bar indicates complex conjugation in $\CC^{2n+1}$.

The real locus $X^{\mu}$ is identified with the space of pairs of transversal maximal isotropic subspaces in $\RR^{2n+1}$, on which the inner product restricts to a quadratic form of signature $(n+1,n)$. The group $G^{\sigma}=\Spin_{n+1,n}(\RR)$ acts on $X^{\mu}$ transitively via its quotient $SO_{n+1,n}(\RR)^{\circ}$.

It will be convenient to replace $G$ and $G^{\sigma}$ with their quotient groups $SO_{n+1,n}(\CC)$ and $SO_{n+1,n}(\RR)^{\circ}$ acting effectively on $X$ and $X^{\mu}$, respectively. All Borel and parabolic subgroups, maximal tori, and their real loci are respectively replaced by their images in $SO_{n+1,n}$.

The variety $X$ is wonderful and the invariants introduced in \S\S\ref{sec:open-Borel-orb} and \ref{subsec:recalls} are known: see e.g.\ case (32) in \S3 of \cite{wonder.red}. Let us describe them here.

Take for $B$ and $T$ the standard Borel subgroup and the standard maximal torus in $SO_{n+1,n}$, given by upper triangular and diagonal special orthogonal matrices in the above basis, respectively. Let $\pm\eps_i$ denote the eigenweights of $e_{\pm i}$ with respect to $T$ ($i=1,\dots,n$). Then $(\eps_1,\dots,\eps_n)$ is the orthonormal basis of $\Xi(X)$ with respect to the (properly scaled) invariant inner product, the simple roots of $G$ are $\alpha_i=\eps_i-\eps_{i+1}$ with $i<n$ (long roots) and $\alpha_n=\eps_n$ (short root), and the character lattice of $T$ is an index 2 extension of $\Xi(X)$ spanned by $(\eps_1+\dots+\eps_n)/2$.

The set of spherical roots is $\Sigma_X=\{\gamma_1,\dots,\gamma_n\}$, where
\begin{align*}
\gamma_i     &= \alpha_i+\alpha_{i+1}=\eps_i-\eps_{i+2} \qquad \text{for }i=1,\dots,n-2, \\
\gamma_{n-1} &= \alpha_{n-1}+\alpha_n=\eps_{n-1},                                        \\
\gamma_n     &= \alpha_n=\eps_n.
\end{align*}
$\Sigma_X$ is a basis of a root system of type $\BBB_{[(n+1)/2]}\times\BBB_{[n/2]}$. The little Weyl group $W_X$ is generated by the reflections $r_{\gamma_1},\dots,r_{\gamma_n}$ associated to the spherical roots; observe that $r_{\gamma_i}=s_{\alpha_i}s_{\alpha_{i+1}}s_{\alpha_i}$ for $i<n$ and $r_{\gamma_n}=s_{\alpha_n}$. The very little Weyl group $\vlW_X$ is generated by $s_{\alpha_n}$.

We have $P=B$. One may choose a slice $Z\subset\OO_{\max}$ consisting of the points $x_a=(V_0,V_a)$, where $a=(a_1,\dots,a_n)\in(\CC^{\times})^n$ and
\begin{align*}
V_0&=\langle e_{-1},\dots,e_{-n}\rangle, \\
V_a&=\langle e_{-1}+a_1e_2,\dots,e_{-i}+a_ie_{i+1}-a_{i-1}e_{i-1},\dots, e_{-n}+a_ne_0-\tfrac{a_n^2}2e_n-a_{n-1}e_{n-1}\rangle,
\end{align*}
cf.\ \cite[9.1, case 10]{dbl.flag}. The coordinates $a_i$ on $Z$ have $T$-eigenweights $-\eps_i-\eps_{i+1}$ for $i<n$ and $-\eps_n$ for $i=n$.

It follows that the two $T^{\sigma}$-orbits in $Z^{\mu}$ or the two $B^{\sigma}$-orbits $\OO,\OO'$ in $\OO_{\max}^{\mu}$ are represented by $x_a$ with $a_i\in\RR^{\times}$ and distinguished by the sign of $a_1a_3a_5\cdots$. Let us assume that $\OO$ is the open $B^{\sigma}$-orbit containing the base point $x_0=x_{(1,\dots,1)}$.

To understand the action of the little Weyl group on $\Bopen(X^{\mu})$, we first note that the simple roots $\alpha_1,\dots,\alpha_{n-1}$ are of type (U) with respect to $\OO$ and $\OO'$, while $\alpha_n$ is of type (T2), because $\alpha_n=\gamma_n$ is the unique simple root proportional to a spherical root and both $\OO,\OO'$ are in one and the same $G^{\sigma}$-orbit (see Propositions \ref{prop:types-spherroots} and~\ref{prop:refl&G-orb}). In particular, $s_{\alpha_n}$ interchanges $\OO$ and $\OO'$.

Next we consider the action of $s_{\alpha_i}$ with $i<n$ on $\OO$. The group $(P_{\alpha_i})_{x_0}$ is contained in the common stabilizer of two maximal isotropic subspaces $V_0$ and $V_+=\langle e_1,\dots,e_n\rangle$, which is just the standard Levi subgroup of the maximal parabolic subgroup corresponding to $\alpha_n$. This Levi subgroup consists of the matrices $\diag(C,1,C^*)$, where $C\in GL_n\simeq GL(V_+)$ and $C^*$ is the matrix of the conjugate operator in $GL(V_0)$ under the inner product pairing between $V_0$ and $V_+$. We shall identify this Levi subgroup in $SO_{n+1,n}$ with $GL_n$. Under this identification, $(P_{\alpha_i})_{x_0}$ is the one-dimensional unipotent subgroup consisting of the matrices
$$
C=
\begin{pmatrix}
1 &              &  & \smash{\text{\raisebox{-0.4ex}{$\vdots$}}} &&&  \\
  & \smash\ddots &  & \smash\vdots &  &  \smash{\text{\huge$0$}}  &   \\
  &              & 1 &     -c      &  &                           &   \\
  &              &   &      1      &           \hdotsfor3             \\
  &              &   &      c      & 1 &                          &   \\
  & \smash{\text{\huge$0$}} &  &   &   &       \smash\ddots       &   \\
  &                         &  &   &   &                          & 1 \\
\end{pmatrix}
\text{\footnotesize $i$-th row}
$$
Hence $R_{x_0}=U^-(PGL_2(\RR))$ and $\9\OO{s_{\alpha_i}}=:\OO_i$ is the $B^{\sigma}$-orbit of the point $x_i=n_{\alpha_i}x_0$, where
$$
n_{\alpha_i}=\diag\biggl(\underbrace{1,\dots,1}_{i-1},
\begin{pmatrix}
0 & -1 \\
1 &  0 \\
\end{pmatrix},\underbrace{1,\dots,1}_{2n-2i-1},
\begin{pmatrix}
 0 & 1 \\
-1 & 0 \\
\end{pmatrix}
\underbrace{1,\dots,1}_{i-1}\biggr)
\in P_{\alpha_i}^{\sigma}.
%\begin{pmatrix}
%1 &              &  \smash{\text{\raisebox{-0.4ex}{$\vdots$}}} &&&  \\
%  & \smash\ddots &  \smash\vdots &  &  \smash{\text{\huge$0$}}  &   \\
%  &              &       0       & -1  &       \hdotsfor2           \\
%  &              &       1       &  0  &                        &   \\
%  & \smash{\text{\huge$0$}}  &   &     &      \smash\ddots      &   \\
%  &                          &   &     &                        & 1 \\
%\end{pmatrix}
%\text{\raisebox{1.4ex}{\footnotesize $i$-th row}}
$$
A computation shows that $(P_{\alpha_{i+1}})_{x_i}$ is the two-dimensional subgroup consisting of the matrices $\diag(C,1,C^*)$ with
$$
C=\begin{pmatrix}
1 &              &  & & \smash{\text{\raisebox{-0.4ex}{$\vdots$}}} &&&  \\
  & \smash\ddots &  & & \smash\vdots &  &  \smash{\text{\huge$0$}}  &   \\
  &              & 1 & t^{-1}-1 &  c  & &                           &   \\
  &              &   &  t^{-1}  &  c  &           \hdotsfor3            \\[2ex]
  &              &   &          &  t  &                            &&   \\
  &              &   &          & 1-t & 1                          &&   \\
  & \smash{\text{\huge$0$}} &&  &     &   &      \smash\ddots       &   \\
  &                         &&  &     &   &                         & 1 \\
\end{pmatrix}
\text{\raisebox{2.4ex}{\footnotesize $i$-th row}}
$$
whenever $i\le n-2$ and $(P_{\alpha_n})_{x_{n-1}}$ is the two-dimensional subgroup of $(2n+1)\times(2n+1)$ matrices
$$
\begin{pmatrix}
1 &              &   &          &               &     &                         &&                         \\
  & \smash\ddots &   &          &               &     &                         &&                         \\
  &              & 1 & t^{-1}-1 & \frac{t-1}2+c &     & \smash{\text{\huge$0$}} &&                         \\
  &              &   &  t^{-1}  &       c       &     &                         &&                         \\[2ex]
  &              &   &          &       t       &     &                         &&                         \\[2ex]
  &              &   &          &      1-t      &  1  &                     \hdotsfor3                     \\
  &       &&& -\frac{(1-t)^2}{2t} & 1-t^{-1} & t^{-1} & & \smash{\text{\llap{\raisebox{-1ex}{\Large$*$}}}} \\[2ex]
  &           & \smash{\text{\huge$0$}} &&& \smash{\text{\raisebox{0ex}{$\vdots$}}} && \smash\ddots &    \\[0.3ex] &&&&&  \smash{\text{\raisebox{0.7ex}{$\vdots$}}} &&& \smash{\!\!\!\!\!\text{\raisebox{1.2ex}{$\ddots$}}} \\
\end{pmatrix}
\text{\raisebox{-3.3ex}{\footnotesize $n+1$-st row}}
$$
(where the entries below the $n+1$-st row and to the right of the $n+1$-st column are uniquely determined by the assumption that the matrix is in $SO_{n+1,n}$). It follows that $\alpha_{i+1}$ is of type (T1) with respect to $\OO_i$ and $\OO_i$ is open in $P_{\alpha_{i+1}}^{\sigma}\OO_i$.

The same argument applies to $\OO'=B^{\sigma}x_{(-1,1\dots,1)}$ and finally yields the following picture for the action of reflection operators, which describes the action of $W_X$ and $\vlW_X$ on $\Bopen(X^{\mu})$:
$$
\xymatrix{
\OO \ar@{<->}[rr]^{s_{\alpha_n}=r_{\gamma_n}} \ar@{<->}[d]_{s_{\alpha_i}}
\POS (-3,0) \drop{\phantom0} \ar@{>}@(ul,dl)_{r_{\gamma_i}} &&
\OO' \ar@{<->}[d]^{s_{\alpha_i}} \POS (29,0) \drop{\phantom0} \ar@{>}@(ur,dr)^{r_{\gamma_i}} \\
\OO_i  \POS (-3,-15) \drop{\phantom0} \ar@{>}@(ul,dl)_{s_{\alpha_{i+1}}} &&
\OO_i' \POS (29,-15) \drop{\phantom0} \ar@{>}@(ur,dr)^{s_{\alpha_{i+1}}}
}
$$

\subsection{}
\label{ex:unord.pairs}

A modification of the previous example is given by replacing $H$ with its normalizer in $G$. A model for $X$ is now the space of \emph{unordered} pairs $\{V,V'\}$ of transversal maximal isotropic subspaces in $\CC^{2n+1}$. A $G$-equivariant real structure on $X$ is induced from the one in the previous example, but the real locus is bigger: $\{V,V'\}\in X^{\mu}$ if and only if either $V=\overline{V},V'=\overline{V'}$, or $V'=\overline{V}$. Accordingly, the group $G^{\sigma}=\Spin_{n+1,n}(\RR)$, which may be replaced with $SO_{n+1,n}(\RR)^{\circ}$, acts on $X^{\mu}$ with two orbits.

The weight lattice $\Xi(X)$ is spanned by $\Sigma_X=\{\gamma_1,\dots,\gamma_{n-1},2\gamma_n\}$ and has index 4 in the character lattice of $T$. The factors $m_1,m_2,\dots$, in the notation of Proposition~\ref{prop:paramet-borel-orb}, are $2,2,1,\dots,1$ if $n=4k$ or $4k+3$, and $4,1,1,\dots,1$ if $n=4k+1$ or $4k+2$. The groups $W_X$ and $\vlW_X$ are the same as above. We still have $P=B$.

In the notation of Example~\ref{ex:ord.pairs}, it is easy to see that $x_a=(V_0,V_a)$ is mapped to $(V_{-a/2},V_{a/2})$ by a suitable element $g\in U$. The set of all pairs $(V_a,V_{-a})$ is yet another slice for Example~\ref{ex:ord.pairs}, which has the advantage of being stable under the involution $(V,V')\mapsto(V',V)$ and therefore descends to a slice in our $X$, which we again denote by $Z$, by abuse of notation.

Denoting $z_a=\{V_a,V_{-a}\}\in Z$, we see that $z_a=z_b$ if and only if $a=\pm b$. Also note that $\overline{V_a}=V_{\bar{a}}$, hence $z_a\in Z^{\mu}$ if and only if $a\in\RR\cup\ii\RR$. The action of $T^{\sigma}$ preserves the two pieces of $Z^{\mu}$ given by $a\in\RR$ and $a\in\ii\RR$, respectively. Each piece is a single $T^{\sigma}$-orbit if $n=4k+1$ or $4k+2$ and splits in two $T^{\sigma}$-orbits differing by the sign of $a_1a_3a_5\cdots$ (which is well defined on $Z^{\mu}$) if $n=4k$ or $4k+3$. The action of $W_X$ and $\vlW_X$ on $\Bopen(X^{\mu})$ in the latter case is described by the following picture:
$$
\xymatrix{
\OO \ar@{<->}[rr]^{s_{\alpha_n}=r_{\gamma_n}} \ar@{<->}[d]_{s_{\alpha_i}}
\POS (-3,0) \drop{\phantom0} \ar@{>}@(ul,dl)_{r_{\gamma_i}} &&
\OO' \ar@{<->}[d]^{s_{\alpha_i}} \POS (29,0) \drop{\phantom0} \ar@{>}@(ur,dr)^{r_{\gamma_i}} \\
\OO_i  \POS (-3,-15) \drop{\phantom0} \ar@{>}@(ul,dl)_{s_{\alpha_{i+1}}} &&
\OO_i' \POS (29,-15) \drop{\phantom0} \ar@{>}@(ur,dr)^{s_{\alpha_{i+1}}}
}
\qquad
\xymatrix{
\OO'' \ar@{<->}[rr]^{s_{\alpha_n}=r_{\gamma_n}} \ar@{<->}[d]_{s_{\alpha_i}}
\POS (-3,0) \drop{\phantom0} \ar@{>}@(ul,dl)_{r_{\gamma_i}} &&
\OO''' \ar@{<->}[d]^{s_{\alpha_i}} \POS (30,0) \drop{\phantom0} \ar@{>}@(ur,dr)^{r_{\gamma_i}} \\
\OO_i''  \POS (-3,-15) \drop{\phantom0} \ar@{>}@(ul,dl)_{s_{\alpha_{i+1}}} &&
\OO_i''' \POS (30,-15) \drop{\phantom0} \ar@{>}@(ur,dr)^{s_{\alpha_{i+1}}}
}
$$
Here $\OO,\OO'$ are the open $B^{\sigma}$-orbits corresponding to $a\in\RR$ and $\OO'',\OO'''$ correspond to $a\in\ii\RR$. If $n=4k+1$ or $4k+2$, then the same picture works under identifications $\OO=\OO'$, $\OO_i=\OO_i'$, $\OO''=\OO'''$, $\OO_i''=\OO_i'''$. Yet another difference with Example~\ref{ex:ord.pairs} is that $\alpha_n$ has type (N1) with respect to $\OO_{n-1},\OO_{n-1}',\OO_{n-1}'',\OO_{n-1}'''$ and type (N2) with respect to $\OO,\OO',\OO'',\OO'''$.

Examples \ref{ex:ord.pairs} and \ref{ex:unord.pairs} demonstrate that the groups $W_X$ and $\vlW_X$ may differ very much in size but have the same orbits on $\Bopen(X^{\mu})$.

\subsection{}
\label{ex:G/TU'}

Suppose that $G=G_0\times T_0$, where $G_0$ is a semisimple group and $T_0$ is a split maximal torus in $G_0$. Take a Borel subgroup $B=B_0\times T_0$, where $B_0$ is a Borel subgroup of $G_0$ containing $T_0$, and a maximal torus $T=T_0\times T_0$ in $B$. We embed $T_0$ ``diagonally'' into $T$ by taking $t\mapsto(t^2,t)$ and denote by $T_0'\subset T$ the image of this embedding.

Consider the homogeneous space $X=G/H$, where $H=T_0'\Ru{H}$ and $\Ru{H}=[U^-,U^-]$ is the derived subgroup of $U^-$. Here $\Sigma_X=\{\alpha_1,\dots,\alpha_l\}$ is the set of all simple roots of $(G,T)$ and $\vlW_X=W_X=W$. We get $\Bmax(X)=\{\Omax\}$ and $\Bmax(X^{\mu})=\Bopen(X^{\mu})$.

The open $(B\times H)$-orbit in $G$ is the subset
$$U\cdot U_{-\alpha_1}^{\times}\cdots U_{-\alpha_l}^{\times}\cdot T_0\cdot T_0'\cdot \Ru{H},$$
where $U_{-\alpha_i}^{\times}=U_{-\alpha_i}\setminus\{1\}$, and each element of this orbit has a unique presentation in the form $g=uu_{-\alpha_1}(a_1)\cdots u_{-\alpha_l}(a_l)t_0t'v$, where $u\in U$, $a_i\ne0$, $t_0\in T_0$, $t'\in T_0'$, $v\in\Ru{H}$. (Here and below in this subsection, $T_0$ denotes the first factor of $T$ sitting in $G_0$.) It follows that
$$\Omax=U\cdot U_{-\alpha_1}^{\times}\cdots U_{-\alpha_l}^{\times}\cdot T_0\cdot x_0$$
consists of the points $x=uu_{-\alpha_1}(a_1)\cdots u_{-\alpha_l}(a_l)t_0x_0$. %, where $x_0\in X$ is the base point (with stabilizer $H$).
For the slice, we may take
$$Z=U_{-\alpha_1}^{\times}\cdots U_{-\alpha_l}^{\times}\cdot T_0\cdot x_0.$$
The $T$-action on $Z$ is described as follows: given $x\in Z$ with $x=u_{-\alpha_1}(a_1)\cdots u_{-\alpha_l}(a_l)t_0x_0$ ($a_i\in\CC^\times$ and $t_0\in T_0$) and $t=(t_1,t_2)\in T$, we have
$$tx=u_{-\alpha_1}\bigl(a_1/\alpha_1(t_1)\bigr)\cdots u_{-\alpha_l}\bigl(a_l/\alpha_l(t_1)\bigr)\,t_1t_0t_2^{-2}x_0.$$

Now we consider the $B^{\sigma}$-orbits in $\Omax^{\mu}$. By Corollary~\ref{cor:struct-loc}, they are in bijection with the $T^{\sigma}$-orbits in $Z^{\mu}$. It is visible from the displayed formula right above that the $T^{\sigma}$-action on $Z^{\mu}$ preserves the signs of $a_i\cdot\alpha_i(t_0)$ ($i=1,\dots,l$). Assume for simplicity that $G_0$ is of adjoint type. Then these signs form a complete system of invariants separating open $B^{\sigma}$-orbits.

To understand the reflection operator on $\Bmax(X^{\mu})$ corresponding to $s_{\alpha_i}$, it suffices to look at open $B^{\sigma}$-orbits meeting a curve $u_{-\alpha_i}(s)\,x$ ($s\in\RR$) with $x\in Z^{\mu}$. A straightforward computation shows that
$$
u_{-\alpha_i}(s)\,x=u_{-\alpha_1}(a_1)\cdots u_{-\alpha_i}(a_i+s)\cdots u_{-\alpha_l}(a_l)\,t_0x_0,
$$
i.e., the action of $u_{-\alpha_i}(s)$ may alter the sign of $a_i\cdot\alpha_i(t_0)$ and preserves the signs of all other $a_j\cdot\alpha_j(t_0)$ ($j\ne i$).

It follows that the action of $s_{\alpha_i}$ on $\Bmax(X^{\mu})$ changes the $i$-th sign in the $l$-tuple of signs mentioned above. Thus $s_{\alpha_1},\dots,s_{\alpha_l}$ act on the set of $l$-tuples of signs as generators of an elementary Abelian 2-group of order $2^l$. This is not compliant with braid relations of type~$\AAA_2$. We deduce that neither the assertion of Theorem~\ref{thm:real-Knop-action} nor of Theorem~\ref{thm:main-little-Weyl} is true for $X$ unless the Dynkin diagram of $G$ contains no $\AAA_2$ pieces. Note that in this case all simple roots are of type (T2) with respect to any $B^{\sigma}$-orbit in $\Bmax(X^{\mu})$.

%%%%%%%%%%%%%%%%%%%%%%%%%%%%%%%%%%%%%%%%%%%%%%%%%%%%%%%%%%%%%%%%%%%%%%%%%%%%%%%%%%%%%%%%%%%%%%%%%%%%%%
%%%%%%%%%%%%%%%%%%%%%%%%%%%%%%%%%%%%%%%%%%%%%%%%


\begin{thebibliography}{BLV}

\bibitem[AC]{ACF}
D. Akhiezer and S. Cupit-Foutou, \textit{On the canonical real structure on wonderful varieties}, J. Reine Angew. Math. (Crelle's Journal) \textbf{693} (2014), 231--244.

\bibitem[BJ]{BorelJi}
A. Borel and L. Ji, \textit{Compactifications of symmetric and locally symmetric spaces}, Birkh\"auser, Boston, 2006.

\bibitem[BP]{wonder.red}
P. Bravi and G. Pezzini, \textit{The spherical systems of the wonderful reductive subgroups},
J. Lie Theory \textbf{25} (2015), no.~1, 105--123.

\bibitem[Br]{brion}
M. Brion, \textit{On orbit closures of spherical subgroups in flag varieties},
Comment. Math. Helv. \textbf{76} (2001), no.~2, 263--299.

\bibitem[BLV]{BLV}
M.~Brion, D.~Luna, and Th. Vust, \textit{Espaces homog{\`e}nes sph{\'e}riques}, Invent. Math. \textbf{84} (1986), no.~3, 617--632.

\bibitem[CT]{CFT}
S. Cupit-Foutou and D. Timashev, \textit{Orbits of real semisimple Lie groups on real loci of complex symmetric spaces}, Acta Math. Sinica, English Series \textbf{34} (2018), no.~3, 439--453.

\bibitem[Kn1]{inv.mot}
F.~Knop, \textit{The asymptotic behavior of invariant collective motion}, Invent. Math. \textbf{116} (1994), 309--328.


\bibitem[Kn2]{knop:action}
F. Knop, \textit{On the set of orbits for a Borel subgroup}, Comment. Math. Helv. \textbf{70} (1995), no.~2, 285--309.

\bibitem[Kn3]{knop:auto}
F. Knop, \textit{Automorphisms, roots systems, and compactifications of homogeneous varieties},
J. Amer. Math. Soc. \textbf{9} (1996), no.~1, 153--174.

\bibitem[Kn4]{knop:loc}
F. Knop, \textit{Localization of spherical varieties}, Algebra \&\ Number Theory \textbf{8} (2014), no.~3, 703--728.

\bibitem[Po]{dbl.flag}
E. V. Ponomareva, \textit{Invariants of the Cox rings of low-complexity double flag varieties for classical groups}, Trans. Moscow Math. Soc. \textbf{76} (2015), no.~1, 71--133.

\bibitem[Se]{Galois}
J.-P. Serre, \textit{Cohomologie galoisienne}, Lect. Notes Math., vol. 5, Springer-Verlag, Berlin, 1994.

\bibitem[Ti]{tim}
D. A. Timashev, \textit{Homogeneous spaces and equivariant embeddings}, Invariant Theory and Algebraic Transformation Groups VIII, Encycl. Math. Sci., vol. 138, Springer-Verlag, Berlin--Heidelberg, 2011.

\bibitem[Vu]{vust}
T. Vust, \textit{Plongements d'espaces sym\'etriques alg\'ebriques: une classification}, Ann. Scuola Norm. Sup. Pisa Cl. Sci. \textbf{17} (1990), no.~4, 165--195.

\bibitem[Wa]{was}
B. Wasserman, \textit{Wonderful varieties of rank two}, Transform. Groups \textbf{1} (1996), no. 4, 375--403.

\end{thebibliography}
\end{document}